\documentclass[12pt, reqno]{amsproc}
\usepackage[utf8]{inputenc}
\usepackage[T2A]{fontenc}
\usepackage[english]{babel}

\usepackage[margin = 2.45cm]{geometry}
\usepackage{enumitem}
\usepackage[svgcolors, dvipsnames]{xcolor}
\usepackage{graphicx}
\usepackage{mathtools}
\usepackage[all]{xy}
\usepackage{amssymb, amsmath, amscd, amsthm}
\usepackage[colorlinks=true]{hyperref}
\hypersetup{urlcolor=blue, citecolor=red}

\thanks{This work was supported by a grant from the Simons Foundation (1030291, 1290607, S.I.M.).}

\title[Vector bundle automorphisms preserving Morse-Bott foliations]{Vector bundle automorphisms \\ preserving Morse-Bott foliations}

\author{Sergiy Maksymenko}
\address{Algebra and Topology Department, Institute of Mathematics NAS of Ukraine \\
Teresh\-chen\-kivska str., 3, Kyiv, 01024, Ukraine}
\email{maks@imath.kiev.ua}

\keywords{Foliation, diffeomorphism, homotopy type, Morse-Bott map, vector bundle}
\subjclass[2020]{
    57R30, 
    57T20
}

%
%
%

%
%
%





\newcommand\mycolor[1]{}

\setlist[enumerate]{itemsep=0.3ex, topsep=0.3ex, label={\rm(\arabic*)}}
\setlist[itemize]{itemsep=0.3ex, topsep=0.3ex, leftmargin=4ex}

\newtheorem{lemma}[subsection]{Lemma}

\newtheorem{subtheorem}[subsubsection]{Theorem}
\newtheorem{sublemma}[subsubsection]{Lemma}

\newtheorem{subcorollary}[subsubsection]{Corollary}

\newtheorem{subremark}[subsubsection]{Remark}
\newtheorem{subexample}[subsubsection]{Example}
\newtheorem{subdefinition}[subsubsection]{Definition}
\newtheorem{subaddendum}[subsubsection]{Addendum}

\makeatletter
\@addtoreset{subsection}{section}
\@addtoreset{equation}{section}
\@addtoreset{figure}{section}
\@addtoreset{table}{section}
\makeatother


\makeatletter
\newcommand\testshape{family=\f@family; series=\f@series; shape=\f@shape.}
\def\myemphInternal#1{\if n\f@shape%
\begingroup\itshape #1\endgroup\/%
\else\begingroup\sf\itshape\small #1\endgroup%
\fi}
\def\myemph{\futurelet\testchar\MaybeOptArgmyemph}
\def\MaybeOptArgmyemph{\ifx[\testchar \let\next\OptArgmyemph
                 \else \let\next\NoOptArgmyemph \fi \next}
\def\OptArgmyemph[#1]#2{\index{#1}\myemphInternal{#2}}
\def\NoOptArgmyemph#1{\myemphInternal{#1}}
\makeatother

\newcommand\monoArrow{\lhook\joinrel\rightarrow}

\newcommand\xmonoArrow[1]{\lhook\joinrel\xrightarrow{~#1~}}

\newcommand\epiArrow{\rightarrow\!\!\!\!\!\to}

\newcommand\xepiArrow[1]{\xrightarrow{#1}\!\!\!\!\!\to}

\newcommand\amatr[4]{\left(\!\begin{smallmatrix}#1\ &#2\\[0.5mm] #3\ &#4 \end{smallmatrix}\!\right)}


\newcommand\Aman{A}
\newcommand\Bman{B}
\newcommand\Cman{C}
\newcommand\Dman{D}
\newcommand\Eman{E}

\newcommand\Kman{K}

\newcommand\Mman{M}

\newcommand\Uman{U}
\newcommand\Vman{V}
\newcommand\Wman{W}
\newcommand\Xman{X}
\newcommand\Yman{Y}


\newcommand\calF{\mathcal{F}}

\newcommand\calP{\mathcal{P}}
\newcommand\calQ{\mathcal{Q}}
\newcommand\calR{\mathcal{R}}

 %


\newcommand\bC{\mathbb{C}}

\newcommand\bN{\mathbb{N}}

\newcommand\bR{\mathbb{R}}
\newcommand\bZ{\mathbb{Z}}

\newcommand\UU{\mathcal{U}}
\newcommand\VV{\mathcal{V}}
\newcommand\WW{\mathcal{W}}

\newcommand\id{\mathrm{id}}          
\newcommand\supp{\mathrm{supp\,}}    
\newcommand\restr[2]{#1\vert_{#2}}   
\newcommand\nrm[1]{\|#1\|}

\newcommand\eps{\varepsilon}                   
\newcommand\GL{\mathrm{GL}}

\newcommand\SO{\mathrm{SO}}
\newcommand\Ort{\mathrm{O}}

\newcommand\unit[1]{e_{#1}}
\newcommand\GLR[1]{\mathrm{GL}_{#1}(\bR)}
\newcommand\MatR[1]{\mathrm{M}_{#1}(\bR)}

\newcommand\gen[1]{\langle#1\rangle}
\newcommand\Aut{\mathrm{Aut}}       
\newcommand\Diff{\mathcal{D}}       
\newcommand\End{\mathrm{End}}       
\newcommand\Emb{\mathrm{Emb}}       
\newcommand\LDiff{\mathcal{L}} 
\newcommand\DiffId{\Diff_{\id}}     

\newcommand\Cr[1]{\mathcal{C}^{#1}}
\newcommand\Cinfty{\mathcal{C}^{\infty}}
\newcommand\Crm[3]{\Cr{#1}\!\left(#2,#3\right)}
\newcommand\Cont[2]{\Crm{0}{#1}{#2}}                         
\newcommand\Ci[2]{\Crm{\infty}{#1}{#2}}               
\newcommand\Chid[2]{\mathcal{C}_{0}^{\infty}\left(#1,#2\right)}   



\newcommand\fixsymbol{\mathrm{fix}}
\newcommand\invsymbol{\mathrm{inv}}
\newcommand\nbsymbol{\mathrm{nb}}
\newcommand\folsymbol{*}


\newcommand\DiffInv[3][\empty]{\Diff_{\invsymbol}(#2,#3\ifx\empty #1\relax\else,#1\fi)}

\newcommand\DiffFix[3][\empty]{\Diff_{\fixsymbol}(#2,#3\ifx\empty #1\relax\else,#1\fi)}

\newcommand\DiffNb[3][\empty]{\Diff_{\nbsymbol}(#2,#3\ifx\empty #1\relax\else,#1\fi)}

\newcommand\DiffHFix[3][\empty]{\Diff^{0}_{\fixsymbol}(#2,#3\ifx\empty #1\relax\else,#1\fi)}

\newcommand\DiffHNb[3][\empty]{\Diff^{0}_{\nbsymbol}(#2,#3\ifx\empty #1\relax\else,#1\fi)}

\newcommand\DiffPlusFix[3][\empty]{\Diff^{+}_{\fixsymbol}(#2,#3\ifx\empty #1\relax\else,#1\fi)}

\newcommand\FDiff[2][\empty]{\Diff^{\folsymbol}(#2\ifx\empty #1\relax\else,#1\fi)}
\newcommand\FDiffFix[2][\empty]{\Diff^{\folsymbol}_{\fixsymbol}(#2\ifx\empty #1\relax\else,#1\fi)}
\newcommand\FDiffA[2][\empty]{\Diff^{=}(#2\ifx\empty #1\relax\else,#1\fi)}

\newcommand\VBAut[2][\empty]{\GL(#2\ifx\empty #1\relax\else,#1\fi)}

\newcommand\DiffLP{\Diff}  
\newcommand\DiffLPInv[3][\empty]{\DiffLP_{inv}(#2,#3\ifx\empty#1\relax\else,#1\fi)}

\newcommand\DiffLPFix[3][\empty]{\DiffLP_{fix}(#2,#3\ifx\empty#1\relax\else,#1\fi)}

\newcommand\DiffLPNb[3][\empty]{\DiffLP_{nb}(#2,#3\ifx\empty#1\relax\else,#1\fi)}

\newcommand\GLV[2][\empty]{\mathrm{GL}(#2\ifx\empty #1\relax\else;#1\fi)}
\newcommand\GLVFix[3][\empty]{\mathrm{GL}(#2,#3\ifx\empty #1\relax\else;#1\fi)}


\newcommand\func{f}
\newcommand\gfunc{g}
\newcommand\dif{h}
\newcommand\gdif{g}


\newcommand\px{x}
\newcommand\py{y}
\newcommand\pz{z}
\newcommand\pu{u}
\newcommand\pv{v}

\newcommand\pui[1]{\pu_{#1}}
\newcommand\pvi[1]{\pv_{#1}}

\newcommand\Circle{S^1}

\newcommand\dker{$0$-kernel}
\newcommand\zeroker[1]{\ker_{0}(#1)}

\newcommand\NGOOD[1]{{\mycolor{NavyBlue}#1-isolating and #1-scalable}}

\newcommand\divhom{{\mycolor{Green}\alpha}}

\newcommand\Dr[1]{\Dman^{n}_{#1}}
\newcommand\rr{{\mycolor{red}r}}

\newcommand\hFoliation{\hat{\Foliation}}

\newcommand\Hhmt{{\mycolor{Green}\mathbf{H}}}

\newcommand\Ghmt{{\mycolor{Green}\mathbf{G}}}

\newcommand\Alin{{\mycolor{orange}A}}
\newcommand\Blin{{\mycolor{orange}B}}
\newcommand\Clin{{\mycolor{orange}C}}

\newcommand\homdata[1]{\chi(\func)}

\newcommand\term[2][\empty]{\myemph[#1]{#2}}

\newcommand\fSing{\Sigma}


\newcommand\vbp{{\mycolor{blue}p}}

\newcommand\Foliation{\mathcal{F}}

\newcommand\aConst{0.2}
\newcommand\bConst{0.8}

\newcommand\vvrt[2][\empty]{\mathsf{Vert}\ifx\empty\relax\else_{#1}\fi#2}  
\newcommand\tang[2][\empty]{\mathsf{T}\ifx\empty\relax\else_{#1}\fi#2}  
\newcommand\tfib[1]{\tang[\!\mathsf{fib}]{#1}}                          


















\newcommand\ATor{\mathbf{T}}

\newcommand\dATor{\partial\ATor}




\newcommand\tRestr{{\mycolor{red}\mathsf{r}}}   
\newcommand\tFibMap{{\mycolor{Green}\mathsf{t}}}

\newcommand\arotat[1]{R_{#1}}
\newcommand\rotat[2]{\arotat{(#1,#2)}}
\newcommand\hDtwist{d}
\newcommand\hLambda{\lambda}
\newcommand\hMu{\mu}
\newcommand\hTau{\tau}

\newcommand\pbs{{\mycolor{Green}\py}}






\newcommand\leaf{\omega}

\newcommand\EndES{\End(\Eman,\fSing)}
\newcommand\GLE{\GL(\Eman)}
\newcommand\GLES{\GL(\Eman,\fSing)}
\newcommand\GLESGG[1]{\GL(\Eman,\fSing;#1)}

\newcommand\OBall[1]{\Bman_{#1}}

\newcommand\sectrVB{\sigma}

\newcommand\NbhEnd{{\mycolor{blue}\mathcal{N}}}
\newcommand\NbhIdE{{\mycolor{blue}\mathcal{M}}}
\newcommand\NbhEndp{{\mycolor{blue}\mathcal{N}'}}

\newcommand\NbhGIdE{{\mycolor{blue}\mathcal{L}}}
\newcommand\NbhGIdEp{{\mycolor{blue}\mathcal{L}'}}

\newcommand\GDistr{\mathcal{G}}

\newcommand\GFolDistr{\GDistr_{\Foliation}}
\newcommand\clGBunchFol{\overline{\GFolDistr}}

\newcommand\Gy[1]{G_{#1}}
\newcommand\UIdProp{{\mycolor{blue}N}}

\newcommand\NbhId{{\mycolor{blue}\mathcal{N}}}

\newcommand\GLESNbhId{{\mycolor{blue}\mathcal{M}}}

\begin{document}
\begin{abstract}
Let $M$ be a smooth manifold and $\mathcal{F}$ a Morse-Bott foliation with a compact critical manifold $\Sigma\subset M$.
Denote by $\mathcal{D}(\mathcal{F})$ the group of diffeomorphisms of $M$ leaving invariant each leaf of $\mathcal{F}$.
Under certain assumptions on $\mathcal{F}$ it is shown that the computation of the homotopy type of $\mathcal{D}(\mathcal{F})$ reduces to three rather independent groups: the group of diffeomorphisms of $\Sigma$, the group of vector bundle automorphisms of some regular neighborhood of $\Sigma$, and the subgroup of $\mathcal{D}(\mathcal{F})$ consisting of diffeomorphisms fixed near $\Sigma$.
Examples of computations of homotopy types of groups $\mathcal{D}(\mathcal{F})$ for such foliations are also presented.
\end{abstract}

\maketitle

\section{Introduction}
The paper continues a series of works~\cite{KhokhliukMaksymenko:IndM:2020, KhokhliukMaksymenko:PIGC:2020, KhokhliukMaksymenko:Nbh:2022, KhokhliukMaksymenko:JHRS:2023, Maksymenko:Lens2:2022, Maksymenko:PocIMNANUS:2023} devoted to computations of the homotopy types of groups of leaf-preserving diffeomorphisms for certain classes of foliations with singularities including Morse-Bott ones.

The general problem of determining homotopy types of different kinds of spaces of maps and groups of automorphisms (especially for smooth maps between manifolds) is extremely hard, and explicit computations are made in not so many cases.
On the other hand, the importance of the information about homotopy types even to real world problems can be observed from the facts that usually real objects and processes are ``invariant with respect to small deformations'', and therefore their integral numeric characteristics (like degree of a map) should be ``stable under such deformations'' and thus represented by some homotopy invariants.
Let us just recall that the homotopy types of diffeomorphism or homeomorphism groups are known for all manifolds of dimension $\leqslant 2$, for many $3$-manifolds, and for certain specific manifolds of dimensions $\geqslant4$, see e.g.~\cite{Chernavski:MSb:1969, EarleEells:JGD:1969, Hamstrom:BAMS:1974, WeissBruce:AMS:2001, Smolentsev:SMP:2006, HongKalliongisMcCulloughRubinstein:LMN:2012} and references therein.
For groups of diffeomorphisms preserving some ``geometric'' structures there is essentially less information, see~\cite{FukuiUshiki:JMKU:1975, Fukui:JJM:1976, Maksymenko:TA:2003, Maksymenko:OsakaJM:2011} especially for computing homotopy types of diffeomorphism groups preserving several classes of foliations on $3$-manifolds and $1$-dimensional foliations with singularities (flow lines).

In~\cite{KhokhliukMaksymenko:Nbh:2022} it was shown that given a manifold $\Mman$, a submanifold $\fSing \subset \Mman$, and its regular neighborhood $\vbp:\Eman\to\fSing$, one can deform by isotopy each diffeomorphism $\dif$ of $\Mman$ preserving $\fSing$ to a diffeomorphism which coincides near $\fSing$ with some (uniquely determined) vector bundle automorphism $\hat{\dif}$ of $\Eman$ given by $\hat{\dif}(\pv)=\lim\limits_{t\to0}\frac{1}{t}\dif(t\pv)$.
Such a statement can be regarded mutually as a parametrized, compactly supported, and even foliated extension of a theorem on isotopies of regular neighborhoods.
It is convenient to mention this procedure as a ``linearization'' of $\dif$.
Note that the correspondence $\tFibMap:\dif\mapsto\hat{\dif}$ can be regarded as a homomorphism from the group $\DiffInv{\Mman}{\fSing}$ of diffeomorphisms of $\Mman$ preserving $\fSing$ into the group $\GLE$ of all vector bundle automorphisms of $\vbp$.

That linearization allowed to compute in~\cite{KhokhliukMaksymenko:JHRS:2023, Maksymenko:Lens2:2022, Maksymenko:PocIMNANUS:2023} the homotopy types of the group $\Diff(\Foliation)$ of leaf preserving diffeomorphisms for certain Morse-Bott foliations $\Foliation$ with only center singularities on the solid torus and all lens spaces as well as on their non-orientable counterparts: solid Klein bottle and twisted $S^2$ bundle over $\Circle$.
Such foliations and flows preserving them are studied in~\cite{ScarduaSeade:JDG:2009, ScarduaSeade:JT:2011, MafraScarduaSeade:JS:2014, HatamianPrishlyak:PIGC:2020}.

It is also proved in~\cite{KhokhliukMaksymenko:Nbh:2022} that $\tFibMap:\DiffInv{\Mman}{\fSing}\to\GLE$ is a locally trivial fibration \term{over its image}, see Theorem~\ref{th:isot_nbh}.
The main result of the present paper, see Theorems~\ref{th:KhM:Fol} and~\ref{th:main_res_mb}, shows that $\tFibMap$ is still a locally trivial fibration over its image if we replace $\DiffInv{\Mman}{\fSing}$ with the group $\Diff(\Foliation)$ of leaf preserving diffeomorphisms of a foliation $\Foliation$ from a certain class which includes Morse-Bott foliations \term{linearizable} near their singular submanifolds, see Definition~\ref{def:mf-fol-homog}.
Together with the above ``linearization'' technique this allows to reduce in certain cases the computation of the homotopy type of $\Diff(\Foliation)$ to three rather independent ingredients: (a)~the group $\Diff(\fSing)$ of diffeomorphisms of $\fSing$; (b)~the group $\GLV{\Foliation,\fSing}$ of vector bundle automorphisms of some regular neighborhood $\Eman$ of $\fSing$ mutually preserving the leaves of $\Foliation$ and fibers of $\Eman$; and (c)~the group $\DiffNb{\Foliation}{\fSing}$ of $\Foliation$-leaf preserving fixed on some neighborhood of the ``singular'' manifold $\fSing$, i.e.\ supported out of the singularity.

As an illustration of usefulness of the developed methods, we will present now a particular case of computations given in Section~\ref{sect:applications}, see Theorem~\ref{th:Omega_On_O2__DFol} and Corollary~\ref{cor:homtype_DF_DnS1_n23}, which reprove one of the main results from~\cite{KhokhliukMaksymenko:JHRS:2023}.

\subsection{Morse-Bott foliation on $D^n\times\Circle$ with a singular ``central'' circle}
For each $\rr\geqslant0$ let $\Cman_{\rr}\subset\bR^{n}$ be the $(n-1)$-sphere of radius $\rr$ centered at the origin $0\in\bR^{n}$, $D^{n}$ be the closed unit $n$-disk also with the center at the origin, and $\Circle$ be the unit circle in the complex plane.
Let also $\ATor=D^{n}\times\Circle$, and $\Foliation = \{ \Cman_{\rr}\times\Circle \mid \rr\in[0;1] \}$ be the partition of $\ATor$ into ``parallel copies'' of $\dATor = S^{n-1}\times\Circle$ and the central circle $0\times\Circle$.
One can regard $\Foliation$ as the partition of $\ATor$ into level sets of the following Morse-Bott function $\func:\ATor\to\bR$, $\func(\pv,\pbs) = \nrm{\pv}^2$.
Denote by $\Diff(\Foliation)$ the group of diffeomorphisms of $\ATor$ leaving invariant each leaf of $\Foliation$, by $\DiffFix{\Foliation}{\Xman}$ the subgroup of $\Diff(\Foliation)$ consisting of diffeomorphisms fixed on a subset $\Xman\subset\ATor$.

Further, let $\Omega^{\infty}(\SO(n)) = \Ci{(\Circle,1)}{(\SO(n),I)}$ be the space of smooth loops in the special linear group $\SO(n)$ at the unit matrix $I$.
Then we have a natural inclusion
\begin{gather*}
    \eta:\Omega^{\infty}(\SO(n))\times\Ort(n)\times\Ort(2) \to \Diff(\Foliation) \subset \Diff(\ATor), \\
    \eta(\Phi, \Alin, \Blin)(\pv,\pbs) = ( \Phi(y) \Alin \pv, \Blin\pbs ),
\end{gather*}
whose image is evidently contained in $\Diff(\Foliation)$.

Moreover, let $\rho:\Diff(\ATor)\to\Diff(\dATor)$, $\rho(\dif)=\restr{\dif}{\dATor}$, be the ``restriction to the boundary'' homomorphism being (due to J.~Cerf) a locally trivial fibration over its image, see Theorem~\ref{th:DinvMS_split} below.
In particular, that image is a union of path components of $\Diff(\dATor)$.

A.~Hatcher~\cite{Hatcher:ProcAMS:1981} shown that for $n=3$ the composition
\[
    \rho\circ\eta:\Omega^{\infty}(\SO(3))\times\Ort(3)\times\Ort(2)\to\Diff(\dATor)
\]
is a homotopy equivalence.

In the case $n=2$ one can say more.
Namely, let $\hLambda, \hMu, \hTau, \hDtwist, \rotat{a}{b} \in\Diff(\dATor)$, $(a,b)\in\Circle\times\Circle$, be diffeomorphisms of $\dATor=\Circle\times\Circle$ given by
\begin{align}\label{eqt:LMTD}
 &\hLambda(\pz,\pbs) = (\pz, \bar{\pbs}),
&&\hMu(\pz,\pbs)     = (\bar{\pz}, \pbs),
&&\hDtwist(\pz,\pbs) = (\pz\pbs, \pbs),
&&\rotat{a}{b}(\pz,\pbs)=(a\pz, b\pbs),
\end{align}
and $\calQ = \gen{\hDtwist, \hLambda, \hMu, \rotat{a}{b}, a,b\in\Circle}$ be the subgroup of $\Diff(\Foliation)$ generated by them.
Evidently, each of these diffeomorphisms extends to a diffeomorphism of all $\ATor$, so we have the following maps:
\begin{equation}\label{equ:incl_Omega_O2_O2__DFol}
    \Omega^{\infty}(\SO(2))\times\Ort(2)\times\Ort(2)
    \xrightarrow{~\eta~}
    \Diff(\ATor)
    \xrightarrow{~\rho~}
    \rho\bigl(\Diff(\ATor)\bigr)
    \supset
    \calQ.
\end{equation}
Then it is a classical result that all these maps and inclusions are homotopy equivalences, e.g.~\cite{Ivanov:LOMI:1976, Wajnryb:FM:1998}, see also~\cite[Section~6]{KhokhliukMaksymenko:JHRS:2023} for discussion.
In particular, those spaces have countably many path components each of which has the homotopy type of $\Circle\times\Circle$.

The following statement is a particular case of Corollary~\ref{cor:th:homtype_DFol_extr_mb-fol} and is contained in~\eqref{equ:hom_eq:DFCircle}.
\begin{subtheorem}\label{th:Omega_On_O2__DFol}
The map $\eta:\Omega^{\infty}(\SO(n))\times\Ort(n)\times\Ort(2) \to \Diff(\Foliation)$ is a homotopy equivalence for each $n\geqslant2$.
\end{subtheorem}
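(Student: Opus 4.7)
The plan is to invoke the main reduction result, Theorem~\ref{th:main_res_mb} (specialized through Corollary~\ref{cor:th:homtype_DFol_extr_mb-fol}), which asserts that up to homotopy $\Diff(\Foliation)$ is assembled from three essentially independent ingredients: the diffeomorphism group $\Diff(\fSing)$ of the singular manifold, the group $\GLV{\Foliation,\fSing}$ of vector bundle automorphisms of a tubular neighborhood of $\fSing$ which preserve simultaneously the leaves of $\Foliation$ and the fibers of the normal bundle, and the subgroup $\DiffNb{\Foliation}{\fSing}$ of leaf-preserving diffeomorphisms supported away from $\fSing$. I will then recognize $\eta$ as the natural comparison map from the product of (homotopy models for) the first two ingredients, the third one being contractible in this example.

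For the foliation on $\ATor=D^{n}\times\Circle$ the singular manifold is $\fSing=0\times\Circle\cong\Circle$, so $\Diff(\fSing)\simeq\Ort(2)$, realized by the rightmost $\Ort(2)$ factor of the domain of $\eta$ acting as $\Blin\pbs$. The normal bundle of $\fSing$ in $\ATor$ is the trivial bundle $\bR^{n}\times\Circle\to\Circle$, and a fiberwise linear automorphism preserving each concentric sphere $\Cman_{\rr}\times\{\pbs\}$ amounts precisely to a smooth loop $\Psi:\Circle\to\Ort(n)$. Factoring $\Psi(\pbs)=\Phi(\pbs)\Alin$ with $\Alin=\Psi(1)\in\Ort(n)$ and $\Phi(1)=I$, and using that a based loop in $\Ort(n)$ automatically takes values in the identity component $\SO(n)$, yields the equivalence $\GLV{\Foliation,\fSing}\simeq\Ort(n)\times\Omega^{\infty}(\SO(n))$, matching the remaining two factors of the domain of $\eta$.

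It remains to show that $\DiffNb{\Foliation}{\fSing}$ is weakly contractible. An element $\dif$ of this group is the identity on $\{\nrm{\pv}<\varepsilon\}$ for some $\varepsilon>0$ and preserves every leaf $\Cman_{\rr}\times\Circle$. The natural deformation is an Alexander-type radial shrinking: set $\alpha_{t}(\pv,\pbs)=((1-t)\pv,\pbs)$ for $t\in[0,1)$, let $H_{t}(\dif)$ equal $\alpha_{t}^{-1}\dif\,\alpha_{t}$ on $\alpha_{t}(\ATor)$ and the identity elsewhere, and set $H_{1}(\dif)=\id$. Because $\dif$ already agrees with the identity on a neighborhood of $\fSing$, this extension is smooth and leaf-preserving for each $t$, and $H_{t}(\dif)\to\id$ in the $\Cinfty$-topology as $t\to 1$.

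The main obstacle is the careful verification that this radial contraction depends smoothly on $\dif$ jointly with $t$ in the $\Cinfty$-topology and behaves well near the boundary $\partial\ATor$, so that it genuinely deformation-retracts $\DiffNb{\Foliation}{\fSing}$ onto $\{\id\}$. Once this contractibility is established, combining the three ingredients through Theorem~\ref{th:main_res_mb} together with the parametrized linearization of Theorem~\ref{th:isot_nbh} shows that $\eta$ induces an isomorphism on all homotopy groups; since the spaces involved have the homotopy type of CW complexes, $\eta$ is a homotopy equivalence, as claimed.
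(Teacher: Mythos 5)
Your proposal identifies the correct ingredients (identify $\Diff(\fSing)\simeq\Ort(2)$, identify the fiberwise-linear leaf-preserving automorphisms with $\Omega^{\infty}(\SO(n))\times\Ort(n)$, kill the residual group), but it misattributes the machinery and leaves the most delicate step unresolved.

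First, a bookkeeping issue. You frame the argument as going through Theorem~\ref{th:main_res_mb} and the ``three rather independent ingredients'' ($\Diff(\fSing)$, $\GLESGG{\GFolDistr}$, $\DiffNb{\Foliation}{\fSing}$) and quote Corollary~\ref{cor:th:homtype_DFol_extr_mb-fol} as a specialization of that. In fact Corollary~\ref{cor:th:homtype_DFol_extr_mb-fol} is proved in Section~\ref{sect:applications} by an entirely separate and much more economical route that \emph{does not} invoke Theorem~\ref{th:main_res_mb} at all: in the extreme-singularity case the whole of $\ATor$ is a tubular neighborhood and the Hadamard-type radial deformation $G(\dif,t)(\px)=\tfrac{1}{t}\dif(t\px)$ from Lemma~\ref{lm:DF_to_GF_hom_type} is a \emph{global} strong deformation retraction of $\Diff(\Foliation)$ onto $\GLV{\Foliation}$, with the explicit splitting of Lemma~\ref{lm:sect_of_tFib}. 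The three-factor decomposition and the fibration/local-section apparatus of Theorem~\ref{th:KhM:Fol} are simply not needed here. If you instead run the LES route you propose, you must also keep track of images of the various $\tFibMap$ and $\tRestr$ (they are only fibrations over their images, a priori proper unions of components) and supply explicit sections to upgrade $\pi_*$-isomorphisms to a homotopy equivalence of $\eta$ itself; none of this is addressed.

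Second, your Alexander-type shrinking of $\DiffNb{\Foliation}{\fSing}$ is morally the same as the paper's $G(\dif,t)=\tfrac{1}{t}\dif(t\px)$ (your $\alpha_t$ is the paper's $\tau=1-t$), but the joint $\Cinfty$-continuity you explicitly flag as ``the main obstacle'' is exactly what the paper's Lemma~\ref{lm:Hadamard_for_vb} is designed to settle: the Hadamard factorization $\dif(\tau\px)=\tau\,\divhom_{\dif}(\px,\tau)$ shows that the scaling family extends smoothly across $\tau=0$, uniformly in $\dif$, and this is what turns the informal ``shrink to the core and take a limit'' into an honest deformation retraction. As written, your proposal asserts this smoothness rather than proving it, and since you yourself identify it as the crux, the proof as submitted has a genuine gap. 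Also, your formulation ``equal $\alpha_t^{-1}\dif\alpha_t$ on $\alpha_t(\ATor)$ and the identity elsewhere'' is garbled: since $\dif$ preserves each sphere $\Cman_r\times\Circle$, the conjugate $\alpha_t^{-1}\dif\alpha_t$ is already defined on all of $\ATor$ and there is no ``elsewhere.''

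Finally, the statement is about a specific map $\eta$, not merely the abstract homotopy type. You describe $\eta$ as ``the natural comparison map'' but never trace it through the equivalences. The paper does so implicitly: composing the homeomorphism $\sigma$ of Lemma~\ref{lm:triv_vb_fol}, the homeomorphism $\gamma$, and the homeomorphism $\theta$ of Lemma~\ref{lm:some_homeq} reproduces exactly the formula $\eta(\Phi,\Alin,\Blin)(\pv,\pbs)=(\Phi(\pbs)\Alin\pv,\Blin\pbs)$, so that $\eta$ is recognized as a composite of explicit homeomorphisms with the strong deformation retraction of Lemma~\ref{lm:DF_to_GF_hom_type}; that final reconciliation is missing from your write-up.
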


\begin{subcorollary}[{\rm c.f.~\cite[Theorem~1.2.1]{KhokhliukMaksymenko:JHRS:2023}}]
\label{cor:homtype_DF_DnS1_n23}
{\rm 1)}~For $n=2$ all the arrows and inclusions are homotopy equivalences:
\[
    \Omega^{\infty}(\SO(2))\times\Ort(2)\times\Ort(2)
    \xrightarrow{~\eta~}
    \Diff(\Foliation)
    \ \subset \
    \Diff(\ATor)
    \xrightarrow{~\rho~}
    \rho\bigl(\Diff(\ATor)\bigr)
    \ \supset \
    \calQ.
\]

{\rm~2)}~For $n=3$ both the arrows:
\[
    \Omega^{\infty}(\SO(3))\times\Ort(3)\times\Ort(2)
    \xrightarrow{~\eta~}
    \Diff(\Foliation)
    \xrightarrow{~\rho~}
    \Diff(\dATor)
\]
are also homotopy equivalences.

{\rm~3)}~Moreover, for both $n=2,3$ the group $\DiffFix{\Foliation}{\dATor}$ is weakly contractible (i.e.\ all its homotopy groups vanish).
\end{subcorollary}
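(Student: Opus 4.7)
The strategy is to invoke the main decomposition result Theorem~\ref{th:main_res_mb} for the Morse-Bott foliation $\Foliation$ on $\ATor=D^{n}\times\Circle$ with singular submanifold $\fSing=\{0\}\times\Circle$. The normal bundle of $\fSing$ is trivial, $\Eman=\Circle\times\bR^{n}$, and $\Foliation$ is standard (hence linearizable in the sense of Definition~\ref{def:mf-fol-homog}) near $\fSing$, so the hypotheses of that theorem apply. It produces a locally trivial fibration
\begin{equation*}
    \DiffNb{\Foliation}{\fSing}\longrightarrow\Diff(\Foliation)\xrightarrow{~\tFibMap~}\tFibMap\bigl(\Diff(\Foliation)\bigr)\subset\GLV{\Foliation,\fSing},
\end{equation*}
whose base is the image of the linearisation homomorphism inside the group of fibre- and leaf-preserving vector bundle automorphisms of $\Eman$. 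Write $\calP:=\Omega^{\infty}(\SO(n))\times\Ort(n)\times\Ort(2)$. The subtheorem will follow from three facts: (a)~this image is the subgroup $\calG\subset\GLV{\Foliation,\fSing}$ of pairs $(\phi,\psi)\in\Ci{\Circle}{\Ort(n)}\times\Diff(\Circle)$ acting by $(\pv,\pbs)\mapsto(\phi(\pbs)\pv,\psi(\pbs))$; (b)~the fibre $\DiffNb{\Foliation}{\fSing}$ is weakly contractible; (c)~$\tFibMap\circ\eta:\calP\to\calG$ is a homotopy equivalence.

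Claim (a) is immediate: leaf preservation forces each fibre-wise linear map to send every sphere $\Cman_{\rr}\times\{\pbs\}$ into itself, hence to lie in $\Ort(n)$; conversely each $(\phi,\psi)\in\calG$ extends by the same formula to a leaf-preserving diffeomorphism of all of $\ATor$ whose linearisation recovers $(\phi,\psi)$, so $\tFibMap$ is surjective onto $\calG$. For (b), a diffeomorphism in $\DiffNb{\Foliation}{\fSing}$ corresponds, via polar coordinates on $D^{n}\setminus 0$, to a smooth family $\{\dif_{\rr}\}_{\rr\in(0,1]}\subset\Diff(S^{n-1}\times\Circle)$ equal to $\id$ for $\rr$ near $0$; the smooth rescaling $(\dif,t)\mapsto\{\dif_{t\rr}\}_{\rr\in(0,1]}$, $t\in[0,1]$, depends continuously on $\dif$ and contracts the whole group to $\id$. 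For (c), $\Diff(\Circle)\simeq\Ort(2)$ is classical, while for a Lie group $G$ the evaluation $\Ci{\Circle}{G}\to G$ at $1\in\Circle$ is split by constant loops and yields $\Ci{\Circle}{G}\cong G\times\Omega^{\infty}G$; applied to $G=\Ort(n)$ this gives $\calG\simeq\Ort(2)\times\Ort(n)\times\Omega^{\infty}\Ort(n)$, and $\Omega^{\infty}\Ort(n)=\Omega^{\infty}\SO(n)$ since based loops remain in the identity component. The explicit formula for $\eta$ shows that $\tFibMap\circ\eta$ matches these factors and is a homotopy equivalence.

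For the subcorollary, use Cerf's restriction fibration $\rho:\Diff(\ATor)\to\Diff(\dATor)$ (Theorem~\ref{th:DinvMS_split}). For $n=2$, the classical results of Ivanov~\cite{Ivanov:LOMI:1976} and Wajnryb~\cite{Wajnryb:FM:1998} quoted around~\eqref{equ:incl_Omega_O2_O2__DFol} assert that all maps and inclusions there are homotopy equivalences; in particular $\eta:\calP\to\Diff(\ATor)$ is one. Since $\eta:\calP\to\Diff(\Foliation)$ is also a homotopy equivalence by the subtheorem, and the two maps differ only by the inclusion $\Diff(\Foliation)\hookrightarrow\Diff(\ATor)$, this inclusion is a homotopy equivalence. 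For $n=3$, Hatcher's theorem~\cite{Hatcher:ProcAMS:1981} asserts that $\rho\circ\eta:\calP\to\Diff(\dATor)$ is a homotopy equivalence; combined with the subtheorem, the restriction $\rho|_{\Diff(\Foliation)}:\Diff(\Foliation)\to\Diff(\dATor)$ is one too. Finally, part~(3) follows by restricting Cerf's fibration to $\Diff(\Foliation)$: the fibration $\DiffFix{\Foliation}{\dATor}\to\Diff(\Foliation)\to\rho(\Diff(\Foliation))$ has total space and base both homotopy equivalent to $\calP$ (via $\eta$ and $\rho\circ\eta$ respectively), so the long exact sequence forces all homotopy groups of the fibre to vanish.

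The main obstacle is surjectivity in~(a) and the correct identification of $\tFibMap(\Diff(\Foliation))$ with $\calG$; both are immediate here because $\Foliation$ is globally a product, but in the general setting of Theorem~\ref{th:main_res_mb} this is precisely the technical heart of the paper. Once granted, parts~(b) and~(c) reduce to a shrinking contraction and standard loop-space bookkeeping.
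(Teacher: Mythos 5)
Your derivation of the Subcorollary from Theorem~\ref{th:Omega_On_O2__DFol} is essentially the paper's: you combine the known homotopy equivalences around~\eqref{equ:incl_Omega_O2_O2__DFol} (for $n=2$) and Hatcher's theorem (for $n=3$) with the subtheorem, and then use the long exact sequence of the boundary-restriction fibration to kill the homotopy groups of the fiber. The 2-out-of-3 logic for parts (1) and (2) is exactly what the paper does.

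There is, however, one genuine gap in your treatment of part (3). You say part (3) follows ``by restricting Cerf's fibration to $\Diff(\Foliation)$.'' Cerf's theorem (Theorem~\ref{th:DinvMS_split}) provides a local section of $\tRestr$ over a neighborhood of $\id_{\fSing}$ landing in $\DiffInv{\Mman}{\fSing}$ (or $\Diff(\ATor)$), but there is no a priori reason for that section to land in the \emph{subgroup} $\Diff(\Foliation)$. To conclude that $\rho|_{\Diff(\Foliation)}\colon\Diff(\Foliation)\to\rho(\Diff(\Foliation))$ is itself a locally trivial fibration with fiber $\DiffFix{\Foliation}{\dATor}$ one needs a leaf-preserving version of the local section; the paper cites precisely this from \cite[Theorem~4.3]{KhokhliukMaksymenko:IndM:2020} (or \cite[Lemma~3.2.2]{KhokhliukMaksymenko:JHRS:2023}). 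This is also the content of Theorem~\ref{th:main_res_mb}\ref{enum:th:main_res_mb:sect}. Without it, the long exact sequence you invoke is not available.

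Your reproof of Theorem~\ref{th:Omega_On_O2__DFol} itself takes a somewhat different route from the paper. You invoke the general machinery of Theorem~\ref{th:main_res_mb}/Theorem~\ref{th:KhM:Fol} to get a fibration, then argue the fiber $\DiffNb{\Foliation}{\fSing}$ is contractible by a radial-shrinking homotopy. Two remarks: (i)~Theorem~\ref{th:main_res_mb} does not directly produce the fibration $\DiffNb{\Foliation}{\fSing}\to\Diff(\Foliation)\to\tFibMap(\Diff(\Foliation))$ you write down; the fibrations it gives are those in diagram~\eqref{equ:diagam_homot_eq_all_maps:fol}, with total spaces $\DiffFix[\vbp]{\Foliation}{\fSing}$, $\DiffFix{\Foliation}{\fSing}$, etc., not the full $\Diff(\Foliation)$. (ii)~Your displayed inclusion $\tFibMap(\Diff(\Foliation))\subset\GLV{\Foliation,\fSing}$ cannot be right, since elements of $\Diff(\Foliation)$ need not fix $\fSing$ pointwise; the image is $\GLV{\hFoliation}$, the group of \emph{all} leaf-preserving bundle automorphisms, which contains a $\Diff(\Circle)$ factor. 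The paper avoids both issues by not appealing to the general theorem here: for an extreme critical submanifold it proves directly (Lemmas~\ref{lm:sect_of_tFib} and~\ref{lm:DF_to_GF_hom_type}) that $\tFibMap\colon\Diff(\Foliation)\to\GLV{\hFoliation}$ has a \emph{global} section $\tsect(\Alin)=\restr{\Alin}{\ATor}$ and that $\tsect\circ\tFibMap$ is a strong deformation retraction; this retraction is exactly the rescaling $\dif\mapsto\tfrac{1}{t}\dif(t\cdot)$ you sketch, and it also shows $\DiffFix[1]{\Foliation}{\fSing}$ is contractible. So your contraction argument is morally the paper's own $G$-deformation, but as written it is misattributed to Theorem~\ref{th:main_res_mb}, which buys you less than what the global section of Lemma~\ref{lm:sect_of_tFib} gives in this special case.
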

\begin{proof}
Statements 1) and 2) follow from discussions above and Theorem~\ref{th:Omega_On_O2__DFol}.

3) By~\cite[Theorem~4.3]{KhokhliukMaksymenko:IndM:2020} or \cite[Lemma~3.2.2]{KhokhliukMaksymenko:JHRS:2023}, for all $n$ the map $\restr{\rho}{\Diff(\Foliation)}\colon\Diff(\Foliation)\to\Diff(\dATor)$ to the subgroup $\Diff(\Foliation)$ is also a locally trivial fibration over its image with fiber $\DiffFix{\Foliation}{\dATor}$.
Moreover, each 1) and 2) imply that $\rho\colon\Diff(\Foliation)\to\rho(\Diff(\Foliation))$ is a homotopy equivalence, whence the homotopy groups of the fiber $\DiffFix{\Foliation}{\fSing}$ should vanish.
\end{proof}

Corollary~\ref{cor:homtype_DF_DnS1_n23} is proved in~\cite[Theorem~1.1.2]{KhokhliukMaksymenko:JHRS:2023} for $n=2$ in ``opposite order'': it first establishes that $\DiffFix{\Foliation}{\fSing}$ is weakly contractible via rather geometric methods, and then uses the fibration $\restr{\rho}{\Diff(\Foliation)}\colon\Diff(\Foliation)\to\rho(\Diff(\dATor))$ to show that it is a homotopy equivalence.

\section{Preliminary definitions and statements}\label{sect:preliminaries}
In what follows all manifolds are assumed to be smooth ($\Cinfty$), they may have boundary, and their connected components may have distinct dimensions.
For $n\geqslant1$ we will denote by $\MatR{n}$ the space of square $(n\times n)$-matrices which we identify with $\bR^{n^2}$ by regarding the entries of those matrices as coordinates.
The arrows $\monoArrow$ and $\epiArrow$ are always used to indicate \term{monomorphism} or \term{epimorphism} respectively, while the signs $\cong$ and $\simeq$ mean respectively \term{homeomorphism} and \term{homotopy equivalence}.

\subsection{Local sections}
For a topological group $G$ denote by $e_{G}$ its unit element and by $G_{0}$ the path component of $G$ containing $e_{G}$.
Then $G_{0}$ is a normal subgroup of $G$, and the quotient $G/G_{0}$ is canonically identified with the set $\pi_0 G$ of path components of $G$.

Let $\phi\colon G \to H$ be a continuous homomorphism to another topological group $H$.
By a \term{local section} of $\phi$ we will mean a continuous map $s\colon U \to G$ defined on some open neighborhood $U$ of the unit $e_{H}$ of $H$ such that $\phi\circ s = \id_{U}$.
If $U=H$, then $s$ is called a \term{global} section.
The following lemma is well known.
\begin{sublemma}[{\rm e.g.~\cite[Theorem~A]{Palais:CMH:1960}}]\label{lm:principal_fibrations}
Let $\phi\colon G \to H$ be a continuous homomorphism between topological groups.
Denote $K=\ker(\phi)$ and $K'=K\cap G_0$.
Suppose $\phi$ admits a local section, i.e.\ a continuous map $s\colon U \to G$ defined on some open neighborhood $U$ of the unit $e_{H}$ of $H$ such that $\phi\circ s = \id_{U}$.
Then $\phi\colon  G \to \phi(G)$ is a locally trivial principal $K$-fibration over the image $\phi(G)$.
If, in addition, $\phi(G)$ is paracompact and Hausdorff, which holds e.g.\ if $H$ is metrizable.
Then the following statements hold.
\begin{enumerate}[label={\rm(\arabic*)}, leftmargin=*]
\item
The restriction $\phi:G_{0}\to H_{0}$ is surjective and is a principal $K'$-fibration.
\item
$\phi(G)$ is a union of path components of $H$, $\phi\colon  G \to \phi(G)$ satisfies the homotopy lifting axiom, and we have the following long exact sequence of homotopy groups:
\begin{align*}
\cdots \to \pi_k(K_0, \unit{G}) \to \pi_k(G_0, \unit{G}) &\to \pi_k(H_0, \unit{H}) \to \pi_{k-1}(K_0, \unit{G})  \to \cdots \\
\cdots
&\to \pi_1(H_0, \unit{H}) \to K/K_0 \to G/G_0 \to G/K.
\end{align*}
\item\label{enum:lm:principal_fibrations:global_sect}
If $U=H$, i.e.\ the section $s$ is global, then there is a homeomorphism $\xi:K\times H \to G$, $\xi(k,h) = k \cdot s(h)$.
If, in this case, $K$ is contractible, then $\phi$ is a homotopy equivalence.
Moreover, if, in addition, $s$ is a \term{homomorphism}, then $\xi$ is an isomorphism of the semi-direct product $K\rtimes H$, corresponding to the action of $s(H)$ on $K$ by conjugations, onto $G$.
More precisely, $K\rtimes H$ is a direct product of sets $K\times H$ with the following operation: $(k_1,h_1)(k_2,h_2)=(k_1 s(h_1) k_2 s(h_1)^{-1}, h_1h_2)$, and we have the following commutative diagram:
\[
    \xymatrix@C=8em@R=2em{
        K \ar@{^(->}[r]^-{k \mapsto (k, e_{H})} \ar@{=}[d] &
        K\rtimes H  \ar@{->>}[r]^-{(k,h) \mapsto h} \ar[d]_-{\xi: (k,h) \mapsto k s(h)} &
        H \ar@{=}[d] \\
        K \ar@{^(->}[r] & G \ar[r]^-{\phi} & H
    }
\]
\end{enumerate}
\end{sublemma}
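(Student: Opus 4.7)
The strategy is to translate the local section at $\unit{H}$ into a full principal bundle atlas by using the group law on $G$, and then to feed the resulting locally trivial bundle into standard fibration theory.

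I would first normalize so that $s(\unit{H})=\unit{G}$ (replacing $s$ by $h\mapsto s(\unit{H})^{-1}s(h)$), and note that $\phi(G)$ is a subgroup of $H$ containing the open neighborhood $U$, hence an open subgroup, hence clopen (its complement is a union of open cosets). Any clopen subset is automatically a union of connected components, and a path in $H$ starting inside $\phi(G)$ must stay there, so $\phi(G)$ is a union of path components of $H$; this already gives the corresponding claim in~(2). Next, for any $g_{0}\in\phi^{-1}(h_{0})$, the translated section $s_{g_{0}}(h)=g_{0}\cdot s(h_{0}^{-1}h)$ over $h_{0}U$ produces a bundle chart $\Psi_{g_{0}}\colon K\times h_{0}U\to\phi^{-1}(h_{0}U)$, $(k,h)\mapsto k\cdot s_{g_{0}}(h)$, whose inverse $g\mapsto(g\cdot s_{g_{0}}(\phi(g))^{-1},\phi(g))$ is manifestly continuous. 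Varying $g_{0}$ over preimages of an open cover of $\phi(G)$ by translates of $U$, these charts assemble $\phi\colon G\to\phi(G)$ into a locally trivial principal $K$-bundle; this is the unnumbered claim of the lemma and requires no paracompactness.

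Under the paracompactness and Hausdorff hypothesis on $\phi(G)$ the bundle becomes a Hurewicz fibration, supplying the homotopy lifting property and the long exact sequence of the fiber sequence $K\hookrightarrow G\to\phi(G)$. For~(1) I would lift a path in $H_{0}$ from $\unit{H}$ to a given $h$; started at $\unit{G}$, the lift stays inside $G_{0}$, so its endpoint witnesses $\phi(G_{0})=H_{0}$, while the fiber over $H_{0}$ is $K\cap G_{0}=K'$, and local triviality restricts to a principal $K'$-bundle. The terminal portion of the long exact sequence then uses the identifications $\pi_{0}(K)=K/K_{0}$, $\pi_{0}(G)=G/G_{0}$, and $\phi(G)\cong G/K$.

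For~(3), when $s$ is a global section I would write the explicit homeomorphism $\xi\colon K\times H\to G$, $\xi(k,h)=k\cdot s(h)$, with continuous inverse $g\mapsto(g\cdot s(\phi(g))^{-1},\phi(g))$. When $K$ is contractible this gives $G\simeq K\times H\simeq H$, and because $\phi\circ\xi$ is the second projection, $\phi$ itself is a homotopy equivalence. If moreover $s$ is a homomorphism, a one-line computation
\[
\xi\bigl((k_{1},h_{1})(k_{2},h_{2})\bigr)=k_{1}s(h_{1})k_{2}s(h_{1})^{-1}s(h_{1}h_{2})=\xi(k_{1},h_{1})\xi(k_{2},h_{2}),
\]
using $s(h_{1}h_{2})=s(h_{1})s(h_{2})$, confirms that $\xi$ is an isomorphism $K\rtimes H\cong G$ making the stated diagram commute. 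The only conceptually nontrivial step is the translation argument in the second paragraph: the realization that a single local section near the identity, together with the group law, already forces a principal bundle structure; everything after that is standard fibration bookkeeping and explicit formulas.
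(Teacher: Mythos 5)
The paper gives no proof of this lemma; it is quoted as a known fact with a pointer to Palais's Theorem~A, so there is nothing in the paper to compare your argument against. On its own merits, your proof is correct and follows the standard route: normalize the section so that $s(e_H)=e_G$, observe that $\phi(G)\supset U$ is an open (hence clopen) subgroup and therefore a union of path components, translate the local section by elements of each fiber to obtain a principal bundle atlas $\Psi_{g_0}(k,h)=k\,g_0\,s(h_0^{-1}h)$ with explicit continuous inverse $g\mapsto(g\,s_{g_0}(\phi(g))^{-1},\phi(g))$, and then harvest the standard consequences: a locally trivial bundle over a paracompact Hausdorff base is a Hurewicz fibration, giving the long exact sequence in~(2), and the explicit global homeomorphism $\xi$ in~(3), including the one-line semidirect-product computation, is exactly right.

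The only place worth flagging is the phrase in~(1) ``local triviality restricts to a principal $K'$-bundle.'' With $g_0\in G_0$ one has $\Psi_{g_0}(k,h)\in G_0$ iff $k\,s(h_0^{-1}h)\in G_0$, and this reduces to $k\in K'$ only if $s(h_0^{-1}h)\in G_0$, i.e.\ only if $s$ maps a neighborhood of $e_H$ in $H_0$ into $G_0$. That is automatic if $U$ can be shrunk to a path-connected neighborhood of $e_H$ (for instance under local path-connectedness of $H$), but is not a formal consequence of the translated charts alone for arbitrary topological groups, since $s^{-1}(G_0)$ need not be a neighborhood of $e_H$. In the setting in which the lemma is actually used in the paper (metrizable, locally contractible groups of diffeomorphisms) this is harmless, and the paper itself does not address it either, but a sentence justifying why $s$ may be assumed to land in $G_0$ would tighten the argument.
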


\subsection{Diffeomorphisms preserving a submanifold}
Let $\Mman$ be a compact manifold.
Denote by $\Diff(\Mman)$ the group of $\Cinfty$ diffeomorphisms of $\Mman$ endowed with the strong $\Cinfty$ Whitney topology.
Note that this topology is generated by a certain metric.
For a subset $\fSing\subset\Mman$ define $\DiffInv{\Mman}{\fSing}$, $\DiffFix{\Mman}{\fSing}$, $\DiffNb{\Mman}{\fSing}$ to be the subgroups of $\Diff(\Mman)$ consisting of diffeomorphisms respectively leaving $\fSing$ \term{invariant}, \term{fixed} on $\fSing$, and fixed on some \term{neighborhood} of $\fSing$, i.e.\ supported in $\Mman\setminus\fSing$.

A submanifold $\fSing$ of $\Mman$ will be called \term{proper}%
\footnote{Usually the term \term{proper} is used only when $\partial\fSing = \Mman \pitchfork \fSing$, but in the present paper it will be convenient to include also the case when some components of $\fSing$ coincide with boundary components of $\Mman$.}
if every connected component $\fSing'$ of $\fSing$ is either a boundary component of $\Mman$ or $\partial\fSing' = \Mman \cap \fSing'$ and this intersection is transversal.

The following well-known result is independently proved by J.~Cerf~\cite[\S2.2.1, Theorem~5]{Cerf:BSMF:1961}, R.~Palais~\cite[Theorem~B]{Palais:CMH:1960}, and E.~Lima~\cite{Lima:CMH:1964}:
\begin{subtheorem}[\cite{Cerf:BSMF:1961, Palais:CMH:1960, Lima:CMH:1964}]
\label{th:DinvMS_split}
Let $\fSing$ be a proper compact submanifold.
Then the natural ``\term{restriction to $\fSing$}'' homomorphism
\[
    \tRestr\colon\DiffInv{\Mman}{\fSing}\to\Diff(\fSing),
    \qquad
    \tRestr(\dif) = \restr{\dif}{\fSing},
\]
with kernel $\DiffFix{\Mman}{\fSing}$, admits a local section.
\end{subtheorem}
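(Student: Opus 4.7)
The plan is to construct a local section $s\colon\mathcal{V}\to\DiffInv{\Mman}{\fSing}$ of $\tRestr$ defined on some open neighborhood $\mathcal{V}$ of $\id_{\fSing}$ in $\Diff(\fSing)$; the identification $\ker(\tRestr)=\DiffFix{\Mman}{\fSing}$ is immediate from the definitions of these groups. The main idea is to extend diffeomorphisms of $\fSing$ close to $\id_{\fSing}$ to diffeomorphisms of $\Mman$ preserving $\fSing$ via a tubular neighborhood construction combined with a vector-field flow.

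First, I would endow $\Mman$ with a Riemannian metric for which the boundary $\partial\Mman$ is totally geodesic and $\fSing$ meets $\partial\Mman$ orthogonally; such a metric exists because $\fSing$ is proper in $\Mman$, and it guarantees that geodesics normal to $\fSing$ stay inside $\Mman$ near boundary components of $\fSing$. Using the normal exponential map I would identify an open tubular neighborhood $\Uman\subset\Mman$ of $\fSing$ with a disk subbundle of the normal bundle of $\fSing$ in $\Mman$, in a way compatible with $\partial\Mman$. Next, using the exponential map on $\fSing$, I would choose a $\Cinfty$-open neighborhood $\mathcal{V}$ of $\id_{\fSing}$ in $\Diff(\fSing)$ small enough that every $g\in\mathcal{V}$ can be written uniquely as $g(\px)=\exp^{\fSing}_{\px}(X_{g}(\px))$ for a smooth vector field $X_{g}$ on $\fSing$ depending continuously on $g$ in the strong $\Cinfty$ topology.

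Then I would extend $X_{g}$ to a vector field $\tilde{X}_{g}$ on $\Mman$ as follows: fix a smooth connection on the normal bundle and a smooth bump function $\lambda\colon\Mman\to[0,1]$ equal to $1$ on a smaller tubular neighborhood $\Uman'\subset\Uman$ of $\fSing$ and supported in $\Uman$; horizontally lift $X_{g}$ to a vector field on $\Uman$, multiply by $\lambda$, and extend by zero outside $\Uman$. The map $X_{g}\mapsto\tilde{X}_{g}$ is continuous and linear, and $\tilde{X}_{g}$ is tangent to $\fSing$ along $\fSing$ (where it equals $X_{g}$) and tangent to $\partial\Mman$ along $\partial\Mman$ (by the orthogonality of the chosen metric). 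Define $s(g)$ to be the time-$1$ map of the flow of $\tilde{X}_{g}$. For $g\in\mathcal{V}$ sufficiently close to $\id_{\fSing}$, the vector field $\tilde{X}_{g}$ is $\Cinfty$-small enough that its flow is well defined up to time $1$ and yields a diffeomorphism of $\Mman$ preserving $\partial\Mman$ and $\fSing$, with $\restr{s(g)}{\fSing}=g$.

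The main obstacle is verifying the continuity of $s$ into $\Diff(\Mman)$ with respect to the strong $\Cinfty$ Whitney topology, which amounts to showing that all derivatives of $\tilde{X}_{g}$ depend continuously on all derivatives of $g$, and that the flow map from vector fields to diffeomorphisms is continuous in this topology on a $\Cinfty$-neighborhood of the zero vector field. Once this together with the boundary-tangency of $\tilde{X}_{g}$ is established, everything else follows by construction, and the equality $\tRestr\circ s=\id_{\mathcal{V}}$ provides the desired local section.
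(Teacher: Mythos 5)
The paper itself does not prove this theorem: it is quoted from Cerf, Palais and Lima, and the remark following it explains that the intended route is through the stronger statement that the restriction map $\Diff(\Mman)\to\Emb(\fSing,\Mman)$ admits a local section near the identity embedding. Your direct construction is a legitimate alternative aim, and the identification of the kernel is indeed immediate, but the construction has a genuine error at its crucial step. You define $X_g$ by $g(\px)=\exp^{\fSing}_{\px}(X_g(\px))$, extend it to a compactly supported field $\tilde X_g$ on $\Mman$ tangent to $\fSing$, and set $s(g)$ to be the time-$1$ flow of $\tilde X_g$. Since that flow preserves $\fSing$ and restricts there to the flow of $X_g$, one gets $\restr{s(g)}{\fSing}=\phi^{X_g}_1$, the time-$1$ flow of $X_g$ on $\fSing$. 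But $\phi^{X_g}_1\neq g$ in general: $\exp^{\fSing}_{\px}(X_g(\px))$ is the endpoint of the geodesic of $\fSing$ from $\px$ with initial velocity $X_g(\px)$, whereas $\phi^{X_g}_1(\px)$ is the endpoint of the \emph{integral curve} of the vector field $X_g$ through $\px$, and the two agree only in degenerate cases. For instance on $\fSing=\bR$ with $X_g(\px)=d\px$, $d\neq0$, one has $\exp_{\px}(X_g(\px))=(1+d)\px$ but $\phi^{X_g}_1(\px)=e^d\px$. Nor can one salvage the argument by choosing $X_g$ instead to be ``the'' vector field with time-$1$ flow $g$, because the map $X\mapsto\phi^{X}_1$ from vector fields to $\Diff(\fSing)$ is not onto any neighborhood of $\id_{\fSing}$ (already for $\fSing=\Circle$ this fails).

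The standard repair is to pass to a \emph{time-dependent} vector field, i.e.\ to an isotopy. Put $g_t(\px)=\exp^{\fSing}_{\px}(tX_g(\px))$, a canonical smooth isotopy from $\id_{\fSing}$ to $g$ depending continuously on $g$; let $Y_t$ be its time-dependent generator on $\fSing$; extend each $Y_t$ to $\Mman$ by exactly your horizontal lift and cut-off; and define $s(g)$ to be the time-$1$ map of the resulting time-dependent flow. Then $\restr{s(g)}{\fSing}=g_1=g$ by construction, and the remaining concerns you flag, the $\Cinfty$-continuity of $g\mapsto s(g)$ and the boundary tangency, become the real technical content and are essentially what is carried out in the cited sources. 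One further point to check: you need the geodesics $t\mapsto\exp^{\fSing}_{\px}(tX_g(\px))$ with $\px\in\partial\fSing$ to remain in $\fSing$, which imposes a condition on the metric (that $\partial\fSing$ be totally geodesic in $\fSing$) beyond the two conditions you stated on $\Mman$.
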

\begin{subremark}\rm
In fact, Theorem~\ref{th:DinvMS_split} is proved in a more general context: namely the same ``restriction to $\fSing$'' map $\Diff(\Mman) \to \Emb(\fSing,\Mman)$, $\dif \mapsto \dif|_{\fSing}$, into the space of all embeddings of $\fSing$ into $\Mman$ admits a local section $s$ defined on some open neighborhood $\UU$ of the identity embedding $\fSing\subset\Mman$.
It then follows that the restriction of $s$ to $\UU\cap\Diff(\fSing)$ gives a local section from Theorem~\ref{th:DinvMS_split}.
\end{subremark}

\begin{subremark}\rm
By Lemma~\ref{lm:principal_fibrations} existence of local section implies that $\tRestr$ is a principal locally trivial $\DiffFix{\Mman}{\fSing}$-fibration over its image being the union of path components of $\Diff(\fSing)$, the map $\tRestr$ satisfies the path lifting axiom which can be interpreted as the Isotopy Extension Theorem, there is a long exact sequence of homotopy groups of that fibration, etc.
Note that even in the ``simplest'' case when $(\Mman,\fSing) = (D^{n}, S^{n-1})$, and thus $\DiffInv{D^n}{S^{n-1}}=\Diff(D^n)$, the map $\tRestr$ plays an important role.
Namely, the quotient $\Gamma^n:=\pi_0\Diff(S^{n-1})/\tRestr\bigl(\pi_0\Diff(D^{n})\bigr)$ of the image of that restriction homomorphism on the level of groups of path-components is an abelian group appearing in the classification smooth structures on manifolds, and it also coincides with the group $\Theta_n$ of homotopy spheres for $n>5$, see e.g.~\cite{Thom:ProcICM:1958, Munkres:AnnMath:1960, KervaireMilnor:AnnMAth:1963, Cerf:LMN:1968}, and also~\cite{HendriksMcCullough:TA:1987} for other applications of that theorem.
\end{subremark}

Further, let $\vbp\colon\Eman\to\fSing$ be a \term{regular neighborhood} of $\fSing$ in $\Mman$, i.e.\ $\vbp$ is a smooth retraction defined on some open neighborhood $\Eman$ of $\fSing$ in $\Mman$ with a fixed vector bundle structure for $\vbp$.
Since we allow that $\dim\fSing' \neq \dim\fSing''$ for distinct components $\fSing'$ and $\fSing''$ of $\fSing$, the dimensions of the fibers over $\fSing'$ and $\fSing''$ may also be distinct.
We will also regard $\fSing$ as the image of the zero section in $\Eman$.

Denote by $\GLE$ the group of $\Cinfty$ vector bundle automorphisms of $\Eman$ (not necessarily fixed on $\fSing$ and thus allowed to interchange the fibers of $\vbp$), and by $\GLES$ its subgroup consisting of automorphisms fixed on $\fSing$, i.e.\ leaving invariant each fiber of $\vbp$.

Let also $\Uman \subset \Eman$ be an open neighborhood of $\fSing$, and $\dif\colon\Uman\to\Eman$ be a $C^{\infty}$ embedding such that $\dif(\fSing) = \fSing$.
Then $\dif$ yields the following $\Cinfty$ vector bundle automorphism:
\begin{equation}\label{equ:Tfibh}
    \tfib{\dif}\colon\Eman\to\Eman,
    \qquad
    \tfib{\dif}(\pv) = \lim_{t\to 0}\tfrac{1}{t} \dif(t\pv), \ \pv\in\Uman.
\end{equation}
It can be regarded as the \term{tangent map of $\dif$ along fibers of $\vbp$} at points of $\fSing$, see~\cite{KhokhliukMaksymenko:Nbh:2022} for a detailed discussions.
For instance, if $\fSing$ is a point, then $\Eman = \bR^{n}$ for some $n\geqslant0$, and $\tfib{\dif}\colon\bR^{n}\to\bR^{n}$ is the (linear) tangent map of $\dif$ at $\fSing$ given by the Jacobi matrix of $\dif$.

Notice that the correspondence
\begin{equation}\label{equ:map_tanh}
    \tFibMap\colon \DiffInv{\Mman}{\fSing} \to \GLE,
    \qquad
    \tFibMap(\dif) = \tfib{\dif},
\end{equation}
is a well-defined continuous homomorphism.
Let $\DiffFix{\Mman}{\fSing, 1}:=\ker(\tFibMap)$ be its kernel, so
\begin{equation}\label{equ:ker_tanh}
\begin{aligned}
    \DiffFix{\Mman}{\fSing, 1} &=
    \{ \dif\in \DiffFix{\Mman}{\fSing} \mid \tfib{\dif} = \id_{\Eman} \} = \\
    &=\{ \dif\in \DiffFix{\Mman}{\fSing} \mid
         \tang[\pbs](\dif)=\id_{\tang[\pbs]{\Eman}} \colon \tang[\pbs]{\Eman} \to \tang[\pbs]{\Eman}
         \ \text{for all} \
         \pbs\in\fSing
      \},
\end{aligned}
\end{equation}
consists of diffeomorphisms fixed on $\fSing$ and whose tangent map along fibers $\tfib{\dif}$ is the identity.
Since the tangent space $\tang[\pbs]{\Eman}$ splits into the direct sum $\tang[\pbs]{\fSing} \oplus \tang[\pbs]{\Eman_{\pbs}}$ of the tangent spaces to $\fSing$ and to the fiber $\Eman_{\pbs}:=\vbp^{-1}(\pbs)$, we get another equivalent description of the kernel of $\tFibMap$ (indicated at the second line of~\eqref{equ:ker_tanh}): it consists of diffeomorphisms fixed on $\fSing$ and whose tangent map is the identity at each $\pbs\in\fSing$.

Let also
\begin{equation}\label{equ:Diff_InvFix_MSp}
\begin{aligned}
    \DiffInv[\vbp]{\Mman}{\fSing} &:=
    \bigl\{
        \dif\in \DiffInv{\Mman}{\fSing} \mid
        \dif = \tfib{\dif} \ \text{near} \ \fSing
    \bigr\}, \\
    \DiffFix[\vbp]{\Mman}{\fSing} &:= \DiffInv[\vbp]{\Mman}{\fSing} \cap \DiffFix{\Mman}{\fSing},
\end{aligned}
\end{equation}
be the subgroups of $\DiffInv{\Mman}{\fSing}$ consisting of diffeomorphisms leaving $\fSing$ invariant (resp.\ fixed on $\fSing$) and coinciding near $\fSing$ with some automorphism of $\Eman$.
Evidently,
\[
    \DiffNb{\Mman}{\fSing} = \DiffFix[\vbp]{\Mman}{\fSing} \cap \DiffFix{\Mman}{\fSing, 1}.
\]
Finally, if $\fSing\subset\Xman$ for some subset $\Xman\subset\Mman$, then it will also be convenient to put:
\begin{align*}
    \DiffFix[\vbp]{\Mman}{\Xman} &:= \DiffFix[\vbp]{\Mman}{\fSing}\cap \DiffFix{\Mman}{\Xman},\\
    \DiffFix[1]{\Mman}{\Xman}    &:= \DiffFix[1]{\Mman}{\fSing}\cap \DiffFix{\Mman}{\Xman},
\end{align*}
Let us collect all the above groups in a single diagram:
\begin{equation}\label{equ:diagam_homot_eq_all_maps}
\begin{aligned}
\xymatrix@C=0.1em@R=0.7em{
 & & & & & \fbox{$\GLES$} \ar@{^(->}[dddd]  \\
 & \DiffFix[\vbp]{\Mman}{\fSing}           \ar@{^(->}[dd] \ar@/^9pt/@{->}[rrrru]^-{\tFibMap} \ar@{^(->}[rr]^-{\simeq} &&
   \DiffFix{\Mman}{\fSing}                 \ar@{^(->}[dd] \ar@{-->}[rru]^-{\tFibMap}     \\
   \fbox{$\DiffNb{\Mman}{\fSing}$}         \ar@/^5pt/@{^(->}[ru] \ar@/_5pt/@{^(->}[rd] \ar@{^(-->}[rr]^(0.6){\simeq}  &&
   \DiffFix{\Mman}{\fSing, 1}              \ar@{^(-->}[ru] \ar@{^(-->}[rd]    \\
 & \DiffInv[\vbp]{\Mman}{\fSing}           \ar[dd]^-{\tRestr} \ar@/_9pt/@{->}[rrrrd]^-{\tFibMap} \ar@{^(->}[rr]^-{\simeq} &&
   \fbox{\fbox{$\DiffInv{\Mman}{\fSing}$}} \ar[dd]^(.7){\tRestr} \ar@{-->}[rrd]^-{\tFibMap}    \\
 & & & & & \GLE \ar[d]^-{\tRestr}\\
 & \Diff(\fSing) \ar@{=}[rr] && \fbox{$\Diff(\fSing)$} \ar@{=}[rr] && \Diff(\fSing)
}
\end{aligned}
\end{equation}
where the right lower arrow $\tRestr\colon\GLE\to\Diff(\fSing)$ also associates to each $\Alin\in\GLE$ its restriction to $\fSing$.

\begin{subdefinition}\label{def:repr_fibr}
Say that a pair of continuous homomorphisms of topological groups $\calP\xmonoArrow{\alpha} \calQ \xrightarrow{\,\beta\,} \calR$  \term{constitute a fibration over its image}, if $\alpha$ is an inclusion, $\beta$ is a homomorphism with kernel $\calP$, and it admits a local section $s:\UU\to\calQ$ defined on some open neighborhood $\UU$ of the unit of $\calR$.
In this case, see Lemma~\ref{lm:principal_fibrations}, $\beta$ is indeed a locally trivial principal $\calP$-fibration over its image $\beta(\calP)$ being a union of path components of $\calR$ etc.
Note however that $\beta$ is not necessarily surjective.
If $\beta$ is surjective, then $\calP\xmonoArrow{\alpha} \calQ \xepiArrow{\,\beta\,} \calR$ will be called a \term{fibration}.
\end{subdefinition}

\begin{subtheorem}[\cite{KhokhliukMaksymenko:Nbh:2022}]
\label{th:isot_nbh}
Let $\Mman$ be a manifold, $\fSing \subset\Mman$ a proper compact submanifold, and $\vbp\colon\Eman\to\fSing$ a regular neighborhood of $\fSing$ in $\Mman$.
Then the following statements hold.
\begin{enumerate}[label={\rm(\arabic*)}, leftmargin=*]
\item\label{th:isot_nbh:linearization}
The inclusion of triples, see horizontal arrows $\xmonoArrow{~\simeq~}$ in~\eqref{equ:diagam_homot_eq_all_maps},
\begin{equation}\label{equ:DMS_homot_equiv}
\begin{array}{lcr}
    \bigl(
        \DiffInv[\vbp]{\Mman}{\fSing}, \ & \
        \DiffFix[\vbp]{\Mman}{\fSing}, \ & \
        \DiffNb{\Mman}{\fSing}
    \bigr)  \\
    & \cap & \\
    \bigl(
        \DiffInv{\Mman}{\fSing}, &
        \DiffFix{\Mman}{\fSing}, &
        \DiffFix{\Mman}{\fSing, 1}
    \bigr)
\end{array}
\end{equation}
is a homotopy equivalence.

\item\label{th:isot_nbh:DfMSp_split}
Every consecutive pair of vertical, diagonal or dashed arrows $\calP \xmonoArrow{} \calQ \xrightarrow{\,\beta\,} \calR$ from the diagram~\eqref{equ:diagam_homot_eq_all_maps} constitute a fibration over its image.
\end{enumerate}
\end{subtheorem}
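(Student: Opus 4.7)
Both parts hinge on a single geometric device, the \emph{fiberwise rescaling} of a diffeomorphism near $\fSing$, combined with the Cerf--Palais--Lima Theorem~\ref{th:DinvMS_split}. For any $\dif\in\DiffInv{\Mman}{\fSing}$ and sufficiently small $t\in(0,1]$, the formula
\[
   H_t(\pv) \;=\; \tfrac{1}{t}\,\dif(t\pv)
\]
defines, on a uniform neighborhood of the zero section of $\vbp$ (uniformity uses compactness of $\fSing$ together with $\dif(\fSing)=\fSing$), a smooth family of embeddings that extends smoothly to $t=0$ with $H_0=\tfib{\dif}$ by~\eqref{equ:Tfibh} and equals $\dif$ at $t=1$. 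Thus $H_t$ is a canonical isotopy from $\dif$ to its fiber-linearization $\tfib{\dif}$, depending continuously on $\dif$ in the strong $\Cinfty$ Whitney topology.

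\textbf{Part (1).} Fix a smooth non-decreasing cut-off $\lambda\colon[0,\infty)\to[0,1]$ with $\lambda\equiv0$ on $[0,\tfrac14]$ and $\lambda\equiv1$ on $[\tfrac34,\infty)$ (using a chosen fiber metric on $\Eman$), and define the deformation
\[
   \Psi_s(\dif)(\pv) \;=\; H_{\,1 - s(1-\lambda(\nrm{\pv}))}(\pv)
\]
on $\{\nrm{\pv}\leqslant 1\}$, extended by $\dif$ outside. Then $\Psi_0=\id$, and $\Psi_1(\dif)$ coincides with $\tfib{\dif}$ on $\{\nrm{\pv}\leqslant\tfrac14\}$ and with $\dif$ on $\{\nrm{\pv}\geqslant\tfrac34\}$; hence $\Psi_1(\dif)\in\DiffInv[\vbp]{\Mman}{\fSing}$. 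The key observation is that $\Psi_s$ \emph{respects each subgroup in the filtration}~\eqref{equ:DMS_homot_equiv}: rescaling preserves pointwise fixing of $\fSing$, so it maps $\DiffFix{\Mman}{\fSing}$ into itself; and if $\tfib{\dif}=\id_\Eman$, then $\Psi_1(\dif)\equiv\id$ on $\{\nrm{\pv}\leqslant\tfrac14\}$, so it lies in $\DiffNb{\Mman}{\fSing}$. This gives a strong deformation retraction simultaneously on all three pairs in~\eqref{equ:DMS_homot_equiv}, proving the homotopy equivalence of triples.

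\textbf{Part (2).} The second arrow in each consecutive pair is either a restriction to $\fSing$ or the tangent map $\tFibMap$. Local sections for the restriction homomorphisms come directly from Theorem~\ref{th:DinvMS_split}; composing those sections with the retraction $\Psi_1$ yields local sections for the restrictions of $\tRestr$ from the vector-bundle-linear subgroups. For $\tFibMap\colon\DiffInv[\vbp]{\Mman}{\fSing}\to\GLE$ and its sub-restrictions, a local section near $\id_\Eman$ is built in two steps: first, use the Cerf--Palais--Lima section on $\fSing$ to trivialize the base diffeomorphism of $\Alin\in\GLE$, reducing to $\Alin$ fixing $\fSing$ pointwise; then fiberwise interpolate between $\Alin$ (on $\{\nrm{\pv}\leqslant\tfrac14\}$) and $\id_\Eman$ (on $\{\nrm{\pv}\geqslant\tfrac34\}$) using the cut-off $\lambda$. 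For $\Alin$ close to $\id_\Eman$ this yields a vector bundle diffeomorphism of $\Eman$ extending by the identity to $\Mman$ and having tangent map $\Alin$ along fibers. The fibration conclusions then follow from Lemma~\ref{lm:principal_fibrations}.

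\textbf{Main difficulty.} The technical heart is analytic: the $k$-th derivatives of $H_t$ acquire factors of $t^{k-1}$, so although $H_t$ extends smoothly at $t=0$, establishing continuity of $\Psi_s$ and of the local section for $\tFibMap$ in the strong Whitney topology --- uniformly in $\dif$ (resp.\ $\Alin$) --- requires careful uniform estimates on these derivatives combined with a matching cut-off. One must also verify that every intermediate map remains a genuine diffeomorphism of $\Mman$, which demands closeness to the reference point and a careful analysis at the annular transition region $\{\tfrac14\leqslant\nrm{\pv}\leqslant\tfrac34\}$.
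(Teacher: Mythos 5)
Your approach to both parts is essentially the paper's. The paper (Section~\ref{sect:rem:proof:th:isot_nbh}, reporting the argument of~\cite{KhokhliukMaksymenko:Nbh:2022}) builds the same fiberwise-rescaling homotopy $\divhom_{\dif}(\px,\tau)=\tfrac{1}{\tau}\dif(\tau\px)$, patches it with a cut-off against $\dif$ outside a collar, and uses those deformations to reduce the local sections to the Cerf--Palais--Lima section on the base; your two-step construction for the section of $\tFibMap$ also matches the ``convex combinations'' method of the cited paper (the present paper proposes an alternative via the fiberwise exponential in Corollary~\ref{cor:sect_of_t}, but that is a refinement, not an obstacle to your route).

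There is, however, one genuine gap in Part~(1), and it is not merely a matter of estimates. Your deformation
\[
  \Psi_s(\dif)(\pv) \;=\; H_{\,1-s(1-\lambda(\nrm{\pv}))}(\pv),
  \qquad H_{\tau}(\pv)=\tfrac{1}{\tau}\dif(\tau\pv),
\]
uses a cut-off $\lambda$ at the \emph{fixed} radii $\tfrac14,\tfrac34,1$, and is then ``extended by $\dif$ outside.'' For this to make sense you need $\dif(\tau\pv)\in\Eman$ for all $\tau\in[1-s,1]$ and all $\nrm{\pv}\leqslant1$, i.e.\ $\dif\bigl(\{\nrm{\pv}\leqslant1\}\bigr)\subset\Eman$. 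That fails for a completely general $\dif\in\DiffInv{\Mman}{\fSing}$: the set $\dif^{-1}(\Eman)$ is a neighborhood of $\fSing$ whose thickness depends on $\dif$ and can be arbitrarily small. Compactness of $\fSing$ gives you, for each fixed $\dif$, some admissible radius $\delta(\dif)>0$; but your formula does not incorporate this, so $\Psi$ is only defined on those $\dif$ fitting the fixed collar, not on all of $\DiffInv{\Mman}{\fSing}$. The paper handles this exactly by introducing a \emph{continuous} function $\delta\colon\DiffInv{\Mman}{\fSing}\to(0;+\infty)$ and evaluating the cut-off at $\nrm{\px}/\delta(\dif)$, so the collar is rescaled per diffeomorphism while the total map stays continuous in $\dif$ in the strong $\Cinfty$ topology. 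Without this $\dif$-dependent rescaling, your $\Psi_s$ is not a globally defined deformation of the group, so the claimed strong deformation retraction of the whole lower triple onto the upper triple is not established. (The remaining properties you need --- that $\Psi_s$ preserves $\DiffFix{\Mman}{\fSing}$ and preserves $\tfib{\,\cdot\,}$, so also $\DiffFix{\Mman}{\fSing,1}$ --- are correct as you state them, once $\Psi$ is made well-defined by inserting $\delta(\dif)$.)
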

In fact, a more detailed relative variant holds:
\begin{subaddendum}\label{add:main_th:X}
Let $\Xman\subset\Mman\setminus\fSing$ be a closed (possibly empty) subset.
Then there exists a deformation
\[ \Hhmt\colon\DiffInv{\Mman}{\fSing}\times[0;1]\to\DiffInv{\Mman}{\fSing} \]
of the lower triple of~\eqref{equ:DMS_homot_equiv} into the upper triple such that
\begin{enumerate}[label={\rm(\arabic*)}]
\item 
$\tFibMap(\Hhmt(\dif,t))=\tFibMap(\dif)$ for all $\dif\in\DiffInv{\Mman}{\fSing}$ and $t\in[0;1]$;
\item
$\restr{\Hhmt(\dif,t)}{\Xman}=\restr{\dif}{\Xman}$ for all $\dif\in\DiffInv{\Mman}{\fSing}$ and $t\in[0;1]$;
\item
except for the lower right arrow $\tRestr\colon\GLE\to\Diff(\fSing)$ for each $\dif\in\UU$ its image $s(\dif)$ under the local section $s$ is fixed on $\Xman$.
\end{enumerate}
In other words, Theorem~\ref{th:isot_nbh} holds if we replace each group $\Diff_{*}(\Mman,-)$ with the intersection $\Diff_{*}(\Mman,-)\cap\DiffFix{\Mman}{\Xman}$.
\end{subaddendum}

Theorem~\ref{th:isot_nbh} can also be stated so that~\eqref{equ:diagam_homot_eq_all_maps} consists of two $(3\times3)$-diagrams:
\begin{align*}
    &\xymatrix@C=1.2em@R=1.5em{
        \DiffNb{\Mman}{\fSing}  \ar@{=}[d]      \ar@{^(->}[r] &
        \DiffFix[\vbp]{\Mman}{\fSing} \ar@{^(->}[d]   \ar[r]^-{\tFibMap} &
        \GLES                   \ar@{^(->}[d]  \\
        \DiffNb{\Mman}{\fSing}  \ar[d]   \ar@{^(->}[r] &
        \DiffInv[\vbp]{\Mman}{\fSing} \ar[d]^-{\tRestr}   \ar[r]^-{\tFibMap} &
        \GLE                    \ar[d]^-{\tRestr}    \\
        1  \ar@{^(->}[r] &
        \Diff(\fSing) \ar@{=}[r] &
        \Diff(\fSing)
    }
    &&
    \xymatrix@C=1.2em@R=1.5em{
        \DiffFix[1]{\Mman}{\fSing}  \ar@{=}[d]      \ar@{^(->}[r] &
        \DiffFix{\Mman}{\fSing} \ar@{^(->}[d]   \ar[r]^-{\tFibMap} &
        \GLES                   \ar@{^(->}[d]  \\
        \DiffFix[1]{\Mman}{\fSing}  \ar[d]   \ar@{^(->}[r] &
        \DiffInv{\Mman}{\fSing} \ar[d]^-{\tRestr}   \ar[r]^-{\tFibMap} &
        \GLE                    \ar[d]^-{\tRestr}    \\
        1  \ar@{^(->}[r] &
        \Diff(\fSing) \ar@{=}[r] &
        \Diff(\fSing)
    }
\end{align*}
in which all rows and columns represent fibrations, and the right diagram ``can be deformed'' into the left diagram with fixed lower row and right column.

The proof is discussed in Section~\ref{sect:rem:proof:th:isot_nbh}.
Theorem~\ref{th:isot_nbh} implies that, due to long exact sequences of homotopy groups of the corresponding fibrations, the study of the homotopy type of $\DiffInv{\Mman}{\fSing}$ might be reduced to the study of the homotopy type of $\DiffInv[\vbp]{\Mman}{\fSing}$ which in turn splits into three (in a certain sense independent) parts included in~\eqref{equ:diagam_homot_eq_all_maps} into rectangular boxes.
\begin{itemize}[leftmargin=*]
\item
\term{The group $\Diff(\fSing)$ of diffeomorphisms of the manifold $\fSing$}.
As mentioned in the introduction, if $\dim\fSing\leqslant 2$ then the homotopy types of $\DiffId(\fSing)$ are completely known, \cite{EarleEells:JGD:1969, EarleSchatz:DG:1970, Gramain:ASENS:1973}, while in dimension $3$ we have a lot of information on them, e.g.~\cite{Hatcher:AnnM:1983, Hatcher:ProcAMS:1981, Gabai:JDG:2001, HongKalliongisMcCulloughRubinstein:LMN:2012};

\item
\term{The group $\GLES$} of vector bundle automorphisms of $\Eman$ fixed on $\fSing$.
It can be identified with the spaces of $\Cinfty$ sections of a certain principal $\GL(\bR^n)$-fibration over $\fSing$, see e.g.~\cite[Ch.~1.2]{Atiyah:Kth:1967}, and therefore can be studied by purely homotopical methods;

\item
\term{The group $\DiffNb{\Mman}{\fSing}$ of diffeomorphisms fixed near $\fSing$}, i.e.\ supported out of $\fSing$.
Removing from $\Mman$ a thin neighborhood $\ATor$ of $\fSing$, one get a manifold $\Mman_1:=\overline{\Mman\setminus\ATor}$ with additional boundary component $\dATor$ being a spherical bundle over $\fSing$, one can identify $\DiffNb{\Mman}{\fSing}$ with the group $\DiffNb{\Mman_1}{\dATor}$ of diffeomorphisms of $\Mman_1$ fixed near $\dATor$.
\end{itemize}

Our principal goal is to extend the above scheme to diffeomorphisms preserving leaves of certain singular foliations, and this was partially done in previous papers.
Namely, an analogue of Theorem~\ref{th:DinvMS_split} for leaf preserving diffeomorphisms of Morse-Bott foliations is proved in~\cite{KhokhliukMaksymenko:IndM:2020}, while an analogue of Theorem~\ref{th:isot_nbh}\ref{th:isot_nbh:linearization} for \term{``scalable'' near $\fSing$} foliations is also established in~\cite{KhokhliukMaksymenko:Nbh:2022}.

The main result of the present paper is Theorem~\ref{th:KhM:Fol} being a counterpart of Theorem~\ref{th:isot_nbh} for leaf preserving diffeomorphisms of rather large classes of partitions of $\Mman$ with the mentioned above scalability properties and certain additional assumptions near $\fSing$.
In particular, Theorem~\ref{th:KhM:Fol} holds for a special class of Morse-Bott foliations, see Theorem~\ref{th:main_res_mb}.

\subsection{Partitions with scalability properties}
Let $\Foliation$ be a partition of a manifold $\Mman$.
The elements of the partition $\Foliation$ of $\Mman$ will be called \term{leaves}.
A subset $\Aman\subset\Mman$ is \term{$\Foliation$-saturated}, whenever $\Aman$ is a union of leaves of $\Foliation$.
In this case by the \term{restriction of $\Foliation$ onto $\Aman$} we will mean a partition $\restr{\Foliation}{\Aman}$ of $\Aman$ into the leaves of $\Foliation$.

\begin{subdefinition}\label{def:homogeneous_foliation}
Suppose $\fSing$ is an $\Foliation$-saturated submanifold of $\Mman$ and $\vbp\colon\Eman\to\fSing$ is a regular neighborhood of $\fSing$.
Say that an open neighborhood $\Uman$ of $\fSing$ is
\begin{itemize}[leftmargin=*, itemsep=1ex]
\item
\term{$\Foliation$-isolating} (for $\fSing$) if \ $\Uman\cap (\overline{\omega}\setminus\omega)  \subset \fSing$ \ for each $\omega\in\Foliation$;

\item
\term{$\Foliation$-scalable} (with respect to $\vbp$) if $\Uman \subset \Eman$ and the following condition holds:
\begin{itemize}[label={$-$}, leftmargin=*, topsep=1ex, parsep=1ex]
\item
let $\px,\pbs\in\Uman$ be any points belonging to the same leaf of $\Foliation$ and $\tau>0$ be any number such that $\tau\px,\tau\pbs\in\Uman$;
then $\tau\px,\tau\pbs$ also belong to the same (but possibly another) leaf of $\Foliation$;
\end{itemize}

\item
\term{star-convex} if $\Uman\subset\Eman$ and $t\Uman\subset\Uman$ for all $t\in[0;1]$.
\end{itemize}
\end{subdefinition}

The following lemma gives an important example of such  neighborhoods.
\begin{sublemma}[{\cite{KhokhliukMaksymenko:Nbh:2022}}]
\label{lm:example_Fgood}
Let $\vbp\colon\Eman\to\fSing$ be a vector bundle over a smooth manifold $\fSing$ (not necessarily compact) and $\gfunc\colon\Eman\to\bR$ be a continuous function such that
\begin{enumerate}[leftmargin=5ex]
\item\label{enum:vbhomog:cond:homog}
$\gfunc$ is homogeneous of some (not necessarily integer) degree $k>0$ on fibers, that is $\gfunc(\tau\pv) = \tau^{k}\gfunc(\pv)$ for all $\tau\geqslant0$ and $\pv\in\Eman$;
\item\label{enum:vbhomog:cond:pathcomp}
for some $a<0$ and $b>0$ the path components of $\gfunc^{-1}(a)$ and $\gfunc^{-1}(b)$ are closed in $\Eman$.
\end{enumerate}
Let also $\Foliation_{\gfunc}$ be the partition of $\Eman$ whose elements are path components of $\fSing$ and path components of the sets $\gfunc^{-1}(c)\setminus\fSing$ for all $c\in\bR$.
Then $\overline{\omega}\setminus\omega \subset\fSing$ for all $\omega\in\Foliation_{\gfunc}$ and every neighborhood of $\fSing$ is mutually \NGOOD{$\Foliation_{\gfunc}$}\ for $\fSing$.
\end{sublemma}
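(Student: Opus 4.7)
The plan is to establish $\overline{\omega}\setminus\omega\subset\fSing$ for every leaf $\omega\in\Foliation_{\gfunc}$ (which is equivalent to the $\Foliation_{\gfunc}$-isolating property for every open neighborhood $\Uman\supset\fSing$), and to verify $\Foliation_{\gfunc}$-scalability separately, without invoking condition~(2). Scalability is immediate: given $\px,\pbs\in\Uman$ lying in a common leaf $\omega$ and $\tau>0$ with $\tau\px,\tau\pbs\in\Uman$, any path $\alpha\colon[0,1]\to\omega$ from $\px$ to $\pbs$ yields the scaled path $s\mapsto\tau\alpha(s)$, on which $\gfunc$ takes the constant value $\tau^{k}\gfunc(\alpha(0))$ and which avoids $\fSing$ whenever $\alpha$ does, so $\tau\px$ and $\tau\pbs$ lie in a common leaf.

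For the closure property I would split into three cases. Homogeneity with $k>0$ forces $\gfunc\equiv0$ on the zero section $\fSing$, so $\gfunc^{-1}(c)\cap\fSing=\emptyset$ for $c\neq0$. Case~(a): if $\omega$ is a path component of $\fSing$, then since $\fSing$ is a smooth locally-path-connected submanifold, $\omega$ is clopen in $\fSing$ and hence closed in $\Eman$. Case~(b): if $\omega$ is a path component of $\gfunc^{-1}(c)\setminus\fSing=\gfunc^{-1}(c)$ with $c\neq0$, the dilation $\sigma_{\tau}(\pv):=\tau\pv$ with $\tau=(b/c)^{1/k}$ for $c>0$ (or $\tau=(a/c)^{1/k}$ for $c<0$) is a self-homeomorphism of $\Eman$ sending $\gfunc^{-1}(c)$ onto $\gfunc^{-1}(b)$ (resp.\ $\gfunc^{-1}(a)$) and bijecting path components to path components; condition~(2) then exhibits $\omega$ as the preimage under a homeomorphism of a set that is closed in $\Eman$, and so $\omega$ itself is closed in $\Eman$. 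In both cases $\overline{\omega}\setminus\omega=\emptyset\subset\fSing$.

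Case~(c), where $\omega$ is a path component of $\gfunc^{-1}(0)\setminus\fSing$, is what I expect to be the main obstacle. The natural device is the sphere-bundle presentation: fix a continuous fiber norm on $\Eman$, let $\Sman:=\{|\cdot|=1\}$ be the resulting unit sphere bundle, and use the homeomorphism $(r,\pu)\mapsto r\pu\colon(0,\infty)\times\Sman\to\Eman\setminus\fSing$. Writing $\tilde{\gfunc}:=\gfunc|_{\Sman}$ gives $\gfunc(r\pu)=r^{k}\tilde{\gfunc}(\pu)$, so $\omega$ corresponds to $(0,\infty)\times P$ for some path component $P$ of $\tilde{\gfunc}^{-1}(0)\subset\Sman$; the closure of $\omega$ in $\Eman$ then adds only points of $\fSing$ (from $r\to 0^{+}$) together with $(0,\infty)\times(\overline{P}^{\Sman}\setminus P)$, and the required inclusion reduces to showing that $P$ is closed in $\Sman$. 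Condition~(2) controls, via the graph identifications $\pu\mapsto(b/\tilde{\gfunc}(\pu))^{1/k}\pu$ and $\pu\mapsto(a/\tilde{\gfunc}(\pu))^{1/k}\pu$ already used in case~(b), the path components of the open sets $\tilde{\gfunc}^{-1}((0,\infty))$ and $\tilde{\gfunc}^{-1}((-\infty,0))$; I would try to parlay this into the closedness of $P$ by analysing an approximating sequence $\pu_{n}\in P\to\pu_{0}$ lying in a would-be distinct component of $\tilde{\gfunc}^{-1}(0)$, perturbing it into the neighbouring non-zero level sets $\gfunc^{-1}(\pm\varepsilon)$ and letting $\varepsilon\to 0^{+}$, so as to contradict the closedness supplied by condition~(2). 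This last step is the technical heart of the argument.
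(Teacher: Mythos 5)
Your decomposition into scalability plus the three closure cases, and the order in which you treat them, is sensible, and everything except case~(c) is correct and complete. Scalability indeed needs only homogeneity (the scaled path $s\mapsto\tau\alpha(s)$ lies in a leaf because $\gfunc$ is constant on it and it misses $\fSing$ iff $\alpha$ does), case~(a) is immediate because $\fSing$ is closed in $\Eman$ and locally path-connected, and in case~(b) the dilation $\sigma_{\tau}$ with $\tau=(b/c)^{1/k}$ (resp.\ $(a/c)^{1/k}$) is a self-homeomorphism of $\Eman$ carrying $\gfunc^{-1}(c)$ onto $\gfunc^{-1}(b)$ (resp.\ $\gfunc^{-1}(a)$), so condition~(2) transports and each such leaf is actually closed in $\Eman$.

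Case~(c), however, is a genuine gap and not merely a technical one. Your sphere-bundle reduction is right: with $\Sman$ the unit sphere bundle and $\tilde\gfunc:=\gfunc|_{\Sman}$, the leaf $\omega$ corresponds to $(0,\infty)\times P$ for a path component $P$ of $\tilde\gfunc^{-1}(0)\subset\Sman$, and one must show $P$ is closed in $\Sman$. But what you offer at that point is a plan, not a proof, and the plan is not obviously executable: dilations fix $\gfunc^{-1}(0)$ as a set, so condition~(2) gives no direct control over $\tilde\gfunc^{-1}(0)$, and path components of a closed subset of a manifold are in general not closed. Concretely, take $\fSing$ a point, $\Eman=\bR^3$, $T\subset S^2$ a (compact) topologist's sine curve, and $\gfunc(\pv)=\|\pv\|^{k}\operatorname{dist}(\pv/\|\pv\|,T)^{2}$ for $\pv\neq0$, $\gfunc(0)=0$. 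Then $\gfunc$ is continuous, $k$-homogeneous and $\geqslant0$, so $\gfunc^{-1}(a)=\varnothing$ for $a<0$, while for $b>0$ the set $\gfunc^{-1}(b)$ is a closed graph over the open connected set $S^2\setminus T$ and hence a single closed path component; thus condition~(2) holds. Yet $\tilde\gfunc^{-1}(0)=T$ has a non-closed path component (the sine arc), so the corresponding leaf $\omega$ has $\overline{\omega}\setminus\omega$ meeting $(0,\infty)\cdot(\overline{P}\setminus P)$ off $\fSing$. This shows that closedness of $P$ cannot follow from conditions (1)–(2) by a generic perturbation-into-nearby-levels argument; whatever proof the cited source gives must use something you have not invoked (or a hypothesis not visible in the statement as reproduced here). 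So the step you label the ``technical heart'' is exactly the place where a proof is missing, and filling it will require a different idea.
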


Thus, if $\vbp\colon\Eman\to\fSing$ is a regular neighborhood of $\fSing$, and there exists a function $\gfunc\colon\Eman\to\bR$ satisfying assumptions of Lemma~\ref{lm:example_Fgood} and such that $\Foliation|_{\Uman} = \Foliation_{\gfunc}|_{\Uman}$ on some open neighborhood $\Uman \subset\Eman$ of $\fSing$, then $\Uman$ is \NGOOD{$\Foliation_{\gfunc}$}\ for $\fSing$.

\subsection{Distributions of automorphism groups}\label{sect:dist_aut_groups}
Let $\vbp\colon\Eman\to\fSing$ be a vector bundle over a compact manifold $\fSing$.
Denote by $\EndES$ the $\bR$-algebra of all $\Cinfty$ vector bundle morphisms $\dif\colon\Eman\to\Eman$ fixed on $\fSing$, i.e.\ satisfying $\vbp\circ\dif=\vbp$.
Then $\GLES \subset \EndES$ is the multiplicative subgroup consisting of all vector bundle automorphisms.
Notice that the vector bundle projection $\vbp\colon\Eman\to\fSing$ is the zero element of the algebra $\EndES$, while $\id_{\Eman}$ is the unit of $\EndES$.

Evidently, each $\dif\in\EndES$ is uniquely determined by its restriction to any open neighborhood of $\fSing$.
In particular, if we fix some smooth compact tubular neighborhood $\Uman\subset\Eman$ of $\fSing$, then one can regard $\EndES$ as a subset of $\Ci{\Uman}{\Eman}$, and therefore we can endow $\EndES$ with the induced strong $\Cinfty$ Whitney topology.
It is easy to see that this topology does not depend on a particular choice of $\Uman$ and turns $\EndES$ into a Fr\'echet space.
Then $\GLES$ is an open subset of $\EndES$, and in particular, it is locally contractible.

For each $\pbs\in\fSing$ let $\Aut(\Eman_{\pbs})$ be the group of linear automorphisms of the fiber $\Eman_{\pbs}:=\vbp^{-1}(\pbs)$.
If $\dim\Eman_{\pbs}=n$, then every local trivialization chart $\phi\colon\vbp^{-1}(\Uman) \to \Uman \times \bR^{n}$, where $\Uman$ is a neighborhood of $\pbs$ in $\fSing$, gives a bijection $\phi_{\pbs}\colon\Aut(\Eman_{\pbs}) \to \GLR{n}$.
Hence, we can endow $\Aut(\Eman_{\pbs})$ with a Lie group structure such that $\phi_{\pbs}$ is an isomorphism of Lie groups, and it is easy to see that this structure does not depend on a choice of a local chart at $\pbs$.

\begin{subdefinition}
By a \term{distribution of automorphism groups over a subset $\Yman \subset \fSing$} or simply a \term{distribution over $\Yman$} we will mean an arbitrary collection of subgroups $\GDistr = \{ \Gy{\pbs} \subset \Aut(\Eman_{\pbs}) \}_{\pbs\in\Yman}$
of the corresponding groups of automorphisms of fibers over the points of $\Yman$.
The set $\Yman$ will be called the \term{support} of $\GDistr$.
Also, $\GDistr$ will be called a \term{Lie groups distribution} if each $\Gy{\pbs}$ is closed in $\Aut(\Eman_{\pbs})$, and therefore is a Lie subgroup of $\Aut(\Eman_{\pbs})$, e.g.\ \cite[Theorem~20.12]{Lee:Manifolds:2013}.
\end{subdefinition}
Thus, for each $\pbs\in\Yman$ one chooses some subgroup $\Gy{\pbs}$ of $\Aut(\Eman_{\pbs})$, and does not require any kind of continuity of such family.
Notice that then the distribution $\overline{\GDistr} = \{ \overline{\Gy{\pbs}} \}_{\pbs\in\Yman}$ consisting of closures of groups $\Gy{\pbs}$ is always a Lie groups distribution.
Also, one can always extend $\GDistr$ to have the support $\fSing$ by setting $\Gy{\pbs} := \Aut(\Eman_{\pbs})$ for $\pbs\in\fSing\setminus\Yman$.

\begin{sublemma}\label{lm:pres_func}
Let $\func\colon\Eman\to\bR$ be a continuous function, and for each $\pbs\in\fSing$ let
\[
    \Gy{\pbs} = \{ A \in \Aut(\Eman_{\pbs}) \mid \func(A(\pv)) = \func(\pv), \ \pv\in\Eman_{\pbs} \}
\]
be the group of linear automorphisms preserving the restriction of $\func$ on $\Eman_{\pbs}$.
Then each $\Gy{\pbs}$ is closed, so $\GDistr = \{\Gy{\pbs}\}_{\pbs\in\fSing}$ is a \term{Lie groups distribution over all of $\fSing$}.
\end{sublemma}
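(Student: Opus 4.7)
The plan is a direct verification: fix an arbitrary $\pbs\in\fSing$ and check that $\Gy{\pbs}$ is both a subgroup and a closed subset of $\Aut(\Eman_{\pbs})$. The Lie subgroup conclusion will then be automatic from the closed subgroup theorem already invoked in the definition of a Lie groups distribution (cf.\ \cite[Theorem~20.12]{Lee:Manifolds:2013}).

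First I would verify the subgroup axioms, which are essentially tautological. The identity $\id_{\Eman_{\pbs}}$ preserves $\restr{\func}{\Eman_{\pbs}}$ trivially. If $A,B\in\Gy{\pbs}$, then for every $\pv\in\Eman_{\pbs}$,
\[
    \func(AB(\pv)) = \func(B(\pv)) = \func(\pv),
\]
so $AB\in\Gy{\pbs}$. Finally, if $A\in\Gy{\pbs}$, substituting $A^{-1}(\pv)$ for $\pv$ in the defining identity gives $\func(\pv)=\func(A(A^{-1}(\pv)))=\func(A^{-1}(\pv))$, whence $A^{-1}\in\Gy{\pbs}$.

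For closedness, I would exploit the fact that the evaluation map
\[
    \ev_{\pv}\colon \Aut(\Eman_{\pbs}) \to \Eman_{\pbs}, \qquad \ev_{\pv}(A) = A(\pv),
\]
is continuous for each fixed $\pv\in\Eman_{\pbs}$ (in any local trivialization chart it is just the linear map $\GLR{n}\to\bR^n$ given by multiplication with the coordinate vector of $\pv$). Since $\func$ is continuous, the composite $A\mapsto\func(A(\pv))-\func(\pv)$ is continuous, so the set
\[
    C_{\pv} := \{A\in\Aut(\Eman_{\pbs}) \mid \func(A(\pv))=\func(\pv)\}
\]
is closed in $\Aut(\Eman_{\pbs})$. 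Hence the intersection
\[
    \Gy{\pbs} \;=\; \bigcap_{\pv\in\Eman_{\pbs}} C_{\pv}
\]
is closed as well. Applying the closed subgroup theorem to each $\Gy{\pbs}\subset\Aut(\Eman_{\pbs})$ then yields the claim that $\GDistr=\{\Gy{\pbs}\}_{\pbs\in\fSing}$ is a Lie groups distribution over $\fSing$.

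There is no serious obstacle here; the statement is really a packaging lemma meant to feed into later constructions (in particular into applications of Lemma~\ref{lm:example_Fgood} where $\func$ is the defining homogeneous function of the foliation). The only mild point worth flagging is that continuity of the evaluation maps $\ev_{\pv}$ is where the Lie group structure on $\Aut(\Eman_{\pbs})$ enters; once a trivialization is chosen this is trivial, and independence of the chart is built into the definition of the Lie group structure given just before the lemma.
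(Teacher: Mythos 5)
Your proof is correct and takes essentially the same approach as the paper: the paper also writes $\Gy{\pbs}=\bigcap_{\pv}\func_{\pv}^{-1}(0)$ where $\func_{\pv}(A)=\func(A(\pv))-\func(\pv)$ is continuous because the action $(A,\pv)\mapsto A\pv$ is continuous, and concludes closedness as an intersection of closed sets. Your explicit verification of the subgroup axioms is harmless extra detail that the paper leaves tacit.
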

\begin{proof}
Evidently, we have a continuous action $\mu\colon\Aut(\Eman_{\pbs}) \times \Eman_{\pbs} \to \Eman_{\pbs}$, $\mu(A,\pv)=A\pv$.
Now, for each $\pv\in\Eman_{\pbs}$ define the following continuous function
\[
    \func_{\pv}\colon\Aut(\Eman_{\pbs})  \to \bR,
    \qquad
    \func_{\pv}(A)=\func(A(\pv)) - \func(\pv).
\]
Then $\Gy{\pbs} = \mathop{\cap}\limits_{\pv\in\Eman_{\pbs}} \func_{\pv}^{-1}(0)$, and so its is closed as an intersection of closed sets.
\end{proof}

Let $\GDistr = \{ \Gy{\pbs} \mid \pbs\in\Yman \}$ be a distribution of groups over a subset $\Yman\subset\fSing$.
Say that a bundle automorphism $\Alin\in\GLES$ \term{belongs to $\GDistr$} if $\restr{\Alin}{\Eman_{\pbs}} \in \Gy{\pbs}$ for every $\pbs\in\Yman$.
The set of all $\Alin\in\GLES$ belonging to $\GDistr$ will be denoted by $\GLESGG{\GDistr}$.
Evidently, $\GLESGG{\GDistr}$ is a subgroup of $\GLES$ and for every $\pbs\in\Yman$ we have the following \term{continuous} homomorphism
\[
    \tFibMap_{\pbs}\colon \GLESGG{\GDistr}\to\Gy{\pbs},
    \qquad
    \tFibMap_{\pbs}(\Alin) = \restr{\Alin}{\Eman_{\pbs}}.
\]

Let us mention that in general $\GLESGG{\GDistr}$ might have a very complicated local structure.
We will now consider an example of distributions with ``good'' local structure.

Let $\GDistr$ be a Lie groups distribution over a subset $\Yman\subset\fSing$ and $\Yman_0 = \{ \pbs\in\Yman \mid \dim\Gy{\pbs} = 0\}$ be the subset of $\Yman$ consisting points $\pbs$ for which the group $\Gy{\pbs}$ is zero-dimensional.
Then the following set
\begin{equation}\label{equ:zero_kernel}
   \zeroker{\GDistr} := \mathop{\cap}\limits_{\pbs\in\Yman_0}\tFibMap_{\pbs}^{-1}(\id_{\Eman_{\pbs}}) \equiv
                        \mathop{\cap}\limits_{\pbs\in\Yman_0} \ker(\tFibMap_{\pbs}),
\end{equation}
is a closed normal subgroup of $\GLESGG{\GDistr}$.
We will call $\zeroker{\GDistr}$ the \term{\dker} of $\GDistr$.

Notice that if $\dim\Gy{\pbs}=0$, then the one-point set $\{ \id_{\Eman_{\pbs}} \}$ is open in $\Gy{\pbs}$, and therefore $\tFibMap_{\pbs}^{-1}(\id_{\Eman_{\pbs}})$ is open in $\GLESGG{\GDistr}$.
However, if $\Yman_0$ is infinite, then in general $\zeroker{\GDistr}$ (being thus an intersection of infinitely many open sets) might be not open in $\GLESGG{\GDistr}$.

\begin{sublemma}\label{lm:G_contractible}
Let $\GDistr$ be a Lie groups distribution over a subset $\Yman\subset\fSing$.
\begin{enumerate}[leftmargin=*]
\item\label{enum:lm:0-uniform:subnbh}
The following conditions are equivalent:
\begin{enumerate}[label={\rm(1\alph*)}, leftmargin=*]
\item\label{enum:lm:0-uniform:ker0}
$\zeroker{\GDistr}$ is open (and thus open closed) in $\GLESGG{\GDistr}$;

\item\label{enum:lm:0-uniform:nbh_of_id}
$\zeroker{\GDistr}$ contains some (not necessarily open) neighborhood of $\id_{\Eman_{\pbs}}$ in $\GLESGG{\GDistr}$;

\item\label{enum:lm:0-uniform:nbh}
there exists an open neighborhood $\NbhGIdE$ of $\id_{\Eman}$ in $\GLESGG{\GDistr}$ such that for every $\pbs\in\Yman$ with $\dim\Gy{\pbs}=0$ and every $\Alin\in\NbhGIdE$ we have that $\restr{\Alin}{\Eman_{\pbs}} = \id_{\Eman_{\pbs}}$.
\end{enumerate}
In the case~\ref{enum:lm:0-uniform:nbh} one can choose $\NbhGIdE \subset \zeroker{\GDistr}$;

\item\label{enum:lm:0-uniform:suff_cond}
Each of the following conditions implies that $\zeroker{\GDistr}$ is open in $\GLESGG{\GDistr}$:
\begin{enumerate}[label={\rm(2\alph*)}, leftmargin=*]
\item\label{enum:lm:0-uniform:Y0_finite}
$\Yman_0$ is finite, i.e.\ $\dim\Gy{\pbs}=0$ only for finitely many $\pbs\in\Yman$;
\item\label{enum:lm:0-uniform:quotient_finite}
the quotient group $\GLESGG{\GDistr}/\zeroker{\GDistr}$ is finite;
\item\label{enum:lm:0-uniform:conn}
there exists a \term{connected} neighborhood $\NbhGIdE$ of $\id_{\Eman}$ in $\GLESGG{\GDistr}$;
\item\label{enum:lm:0-uniform:contr}
there exists a \term{contractible} neighborhood $\NbhGIdE$ of $\id_{\Eman}$ in $\GLESGG{\GDistr}$.
\end{enumerate}
Moreover, in the cases~\ref{enum:lm:0-uniform:conn} and~\ref{enum:lm:0-uniform:contr}, one can choose $\NbhGIdE\subset\zeroker{\GDistr}$.
\end{enumerate}
\end{sublemma}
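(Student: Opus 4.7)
The plan is to establish part~(1) via the cyclic chain (1a)~$\Rightarrow$~(1b)~$\Rightarrow$~(1c)~$\Rightarrow$~(1a), and then to deduce each of the sufficient conditions in part~(2) by exhibiting an open neighborhood of $\id_{\Eman}$ in $\GLESGG{\GDistr}$ contained in $\zeroker{\GDistr}$, which by part~(1) is already equivalent to openness.

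For part~(1), the implications (1a)~$\Rightarrow$~(1b) and (1b)~$\Rightarrow$~(1c) are essentially immediate: the first is trivial, and the second reduces to taking an open interior of the given neighborhood and reading the fiberwise identity condition directly off the definition~\eqref{equ:zero_kernel}. The substantial direction is (1c)~$\Rightarrow$~(1a), for which I will use that $\zeroker{\GDistr}$ is a subgroup of $\GLESGG{\GDistr}$: left-translating the open set $\NbhGIdE$ by an arbitrary $\Blin\in\zeroker{\GDistr}$ produces an open neighborhood of $\Blin$ still lying inside $\zeroker{\GDistr}$, so $\zeroker{\GDistr}$ is open; closedness then follows from the standard fact that every open subgroup of a topological group is automatically closed.

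For part~(2), the key common observation is that for each $\pbs\in\Yman_0$ the Lie group $\Gy{\pbs}$ is zero-dimensional, so $\{\id_{\Eman_{\pbs}}\}$ is simultaneously open and closed in $\Gy{\pbs}$, and consequently each kernel $\ker(\tFibMap_{\pbs})$ is clopen in $\GLESGG{\GDistr}$. With this in hand, (2a) is simply a finite intersection of open sets. For (2c) and (2d) I will run the same continuity argument: the image $\tFibMap_{\pbs}(\NbhGIdE)$ of a connected (respectively, contractible) neighborhood is a connected subset of the zero-dimensional group $\Gy{\pbs}$ containing $\id_{\Eman_{\pbs}}$ and hence must be the singleton, placing $\NbhGIdE$ inside every such kernel and therefore inside $\zeroker{\GDistr}$; this also yields the ``moreover'' clause for free.

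The main obstacle is case~(2b), where no neighborhood is directly supplied by the hypothesis. My plan is to first note that $\zeroker{\GDistr}$ is always closed, being an intersection of the \emph{closed} subgroups $\ker(\tFibMap_{\pbs})$, and then to invoke the standard fact that a closed subgroup of finite index in a topological group is automatically open, since its complement is a finite union of closed cosets and hence closed. Once this reframing is in place, part~(1) delivers the conclusion uniformly in the remaining cases.
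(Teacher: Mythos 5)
Your proof is correct and follows essentially the same route as the paper's: part~(1) is direct from the definitions (the paper dismisses it in one sentence; you spell out the trivial implications and the left-translation argument for (1c)~$\Rightarrow$~(1a)), and in part~(2) you use the same key facts — $\ker(\tFibMap_{\pbs})$ is clopen for $\pbs$ with $\dim\Gy{\pbs}=0$, connected/contractible images in a zero-dimensional group are singletons, and for (2b) the complement of a closed finite-index subgroup is a finite union of closed cosets. The only cosmetic difference is that your write-up is slightly more explicit where the paper is terse.
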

\begin{proof}
Statement~\ref{enum:lm:0-uniform:subnbh} directly follows form the definitions.

\ref{enum:lm:0-uniform:Y0_finite}
If $\Yman_0$ is finite, then by~\eqref{equ:zero_kernel}, $\zeroker{\GDistr}$ is an intersection of finitely many open sets $\tFibMap_{\pbs}^{-1}(\id_{\Eman_{\pbs}})$.

\ref{enum:lm:0-uniform:quotient_finite}
Suppose $\GLESGG{\GDistr}/\zeroker{\GDistr}$ is finite, and let $\Kman_1=\zeroker{\GDistr},\Kman_2,\ldots,\Kman_n$ be corresponding  adjacent classes of $\zeroker{\GDistr}$.
Since $\zeroker{\GDistr}$ is closed, we have that each $\Kman_i$ is also closed, and therefore $\zeroker{\GDistr} = \GLESGG{\GDistr}\setminus\mathop{\cup}\limits_{i=2}^{n}\Kman_i$ is open.

\ref{enum:lm:0-uniform:conn}
If $\NbhGIdE$ is connected, then for each $\pbs\in\Yman$ with $\dim\Gy{\pbs}=0$ the image $\tFibMap_{\pbs}(\NbhGIdE)$ is contained in the path component $\{\id_{\Eman_{\pbs}}\}$ of $\id_{\Eman_{\pbs}}$ in $\Gy{\pbs}$.
In other words, $\restr{\Alin}{\Eman_{\pbs}}=\id_{\Eman_{\pbs}}$ for all $\Alin\in\NbhGIdE$.
Thus, $\NbhGIdE \subset \zeroker{\GDistr}$.

Finally, \ref{enum:lm:0-uniform:contr} implies~\ref{enum:lm:0-uniform:conn} and therefore openness of $\zeroker{\GDistr}$.
\end{proof}

One of the main observations of the present paper is that each of the properties~\ref{enum:lm:0-uniform:ker0}-\ref{enum:lm:0-uniform:nbh} is equivalent to~\ref{enum:lm:0-uniform:contr}, i.e.\ to local contractibility of $\zeroker{\GDistr}$, see Lemma~\ref{lm:char:0-uniform} below.

\section{Main result}
\subsection{Leaf preserving diffeomorphisms}
Let $\Foliation$ be a partition of $\Mman$.
It will be convenient to call its elements \term{leaves}.
Let also $\fSing$ be a proper submanfold of $\Mman$, $\Uman$ be an open neighborhood of $\fSing$, and $\dif\colon\Uman\to\Mman$ be a smooth embedding with $\dif(\fSing)=\fSing$, and $\Vman\subset\Uman$ be any subset.
Say that $\dif$ is \term{$\Foliation$-leaf preserving on $\Vman$} if $\dif(\omega\cap\Vman) \subset \omega$ for each leaf $\omega$ of $\Foliation$.
In particular, a diffeomorphism $\dif\in\Diff(\Mman)$ is \term{$\Foliation$-leaf preserving}, whenever $\dif(\leaf)=\leaf$ for all $\leaf\in\Foliation$.
Denote by $\Diff(\calF)$ the group of all $\Foliation$-leaf preserving diffeomorphisms and by
\[
    \mathcal{D}_{*}(\Foliation, -) := \mathcal{D}_{*}(\Mman, -) \cap \Diff(\calF)
\]
the intersections of the corresponding groups from~\eqref{equ:diagam_homot_eq_all_maps} with $\Diff(\calF)$, so we just replace $\Mman$ with $\Foliation$ in all the notations.
For instance, $\DiffFix{\Foliation}{\fSing} := \DiffFix{\Mman}{\fSing} \cap \Diff(\calF)$ and so on.

Assume further that $\fSing$ is $\Foliation$-saturated.
Then $\Diff(\calF) \subset \DiffInv{\Mman}{\fSing}$, whence
\[
\Diff(\Foliation) \equiv \DiffInv{\Foliation}{\fSing} := \DiffInv{\Mman}{\fSing} \cap \Diff(\Foliation).
\]

Note that for each $\pbs\in\fSing$ and each $\dif\in\Diff(\Foliation)$ such that $\dif(\pbs)=\pbs$ we have the isomorphism $\restr{(\tfib{\dif})}{\Eman_{\pbs}}\colon\Eman_{\pbs}\to\Eman_{\pbs}$ of the corresponding fiber $\Eman_{\pbs}$.
In particular, we get the following subgroup of $\Aut(\Eman_{\pbs})$:
\begin{equation}\label{equ:GDistroFol}
    \Gy{\pbs} :=
    \{
        \restr{(\tfib{\dif})}{\Eman_{\pbs}}
        \mid
        \dif\in\DiffFix{\Foliation}{\fSing}
    \},
\end{equation}
the distribution of groups over $\fSing$:
\[
    \GFolDistr := \{ \Gy{\pbs} \mid \pbs\in\fSing \},
\]
and therefore the subgroup $\GLESGG{\GFolDistr}$ of $\GLES$.
By definition, $\Alin\in\GLES$ belongs to $\GLESGG{\GFolDistr}$ if and only if for every $\pbs\in\fSing$ we have that $\restr{\Alin}{\Eman_{\pbs}}\in\Gy{\pbs}$, i.e.\ there exists $\dif\in\DiffFix{\Foliation}{\fSing}$ (depending on $\pbs$) such that $\restr{(\tfib{\dif})}{\Eman_{\pbs}} = \restr{\Alin}{\Eman_{\pbs}}\colon \Eman_{\pbs} \to \Eman_{\pbs}$.

Thus, by definition,
\begin{equation}\label{equ:tDFS_GLFol}
\tFibMap\bigl(\DiffFix{\Foliation}{\fSing}\bigr) \subset \GLESGG{\Foliation}.
\end{equation}

Also, for each closed subset $\Xman\subset\Mman\setminus\fSing$ and each $\pbs\in\fSing$ define the subgroups of $\Gy{\pbs}$:
\begin{equation}\label{equ:GDistroFol_X}
    \Gy{\pbs,\Xman} :=
    \{
        \restr{(\tfib{\dif})}{\Eman_{\pbs}}
        \mid
        \dif\in\DiffFix{\Foliation}{\Xman\cup\fSing}
    \},
\end{equation}
and the following distribution over $\fSing$:
\[
    \GDistr_{\Foliation,\Xman} := \{ \Gy{\pbs,\Xman} \mid \pbs\in\fSing \}.
\]

Our aim is to establish the following leaf-preserving counterpart of Theorem~\ref{th:isot_nbh}.
\begin{subtheorem}[cf.~\cite{KhokhliukMaksymenko:Nbh:2022}]
\label{th:KhM:Fol}
Let $\Mman$ be a manifold, $\Foliation$ a partition of $\Mman$, $\fSing \subset\Mman$ a compact proper $\Foliation$-saturated submanifold, and $\vbp\colon\Eman\to\fSing$ some regular neighborhood of $\fSing$ in $\Mman$.
\begin{equation}\label{equ:diagam_homot_eq_all_maps:fol}
    \begin{aligned}
        \xymatrix@C=0.1em@R=1em{
            & & & & & \fbox{$\GLESGG{\GFolDistr}$}            \\
            & \DiffFix[\vbp]{\Foliation}{\fSing}   \ar@{^(->}[dd] \ar@/^9pt/@{->}[rrrru]^-{\tFibMap} \ar@{^(-->}[rr]^-{\simeq} &&
            \DiffFix{\Foliation}{\fSing}           \ar@{^(->}[dd] \ar@{->}[rru]^-{\tFibMap}     \\
            \fbox{$\DiffNb{\Foliation}{\fSing}$}   \ar@/^5pt/@{^(->}[ur] \ar@/_5pt/@{^(->}[dr] \ar@{^(->}[rr]^(0.6){\simeq}  &&
            \DiffFix[1]{\Foliation}{\fSing}        \ar@{^(->}[ur] \ar@{^(->}[dr]    \\
            & \DiffInv[\vbp]{\Foliation}{\fSing}   \ar[d]^-{\tRestr}  \ar@{^(-->}[rr]^-{\simeq} &&
            \fbox{\fbox{$\Diff(\Foliation) =\DiffInv{\Foliation}{\fSing}$}}
                 \ar[d]^-{\tRestr}   \\
            & \fbox{$\Diff(\fSing)$} \ar@{=}[rr] && \Diff(\fSing) &&
        }
    \end{aligned}
\end{equation}
\begin{enumerate}[wide, itemsep=1ex, label={\rm(\arabic*)}]
\item\label{enum:th:KhM:Fol:HE}
If $\fSing$ has an \NGOOD{$\Foliation$}\ neighborhood in $\Eman$, then the homotopy $\Hhmt$ from Theorem~\ref{th:isot_nbh} leaves invariant $\Diff(\Foliation)$, so the inclusion of triples
\begin{equation}\label{equ:DMS_homot_equiv_fol}
\begin{array}{lcr}
    \bigl(
        \DiffInv[\vbp]{\Foliation}{\fSing}, &
        \DiffFix[\vbp]{\Foliation}{\fSing}, &
        \DiffNb{\Foliation}{\fSing}
    \bigr)
    \\ &  \cap \\
    \bigl(
        \Diff(\Foliation) &
        \DiffFix{\Foliation}{\fSing}, &
        \DiffFix[1]{\Foliation}{\fSing}
    \bigr)
\end{array}
\end{equation}
is a homotopy equivalence, see the arrows $\xmonoArrow{~\simeq~}$ in~\eqref{equ:diagam_homot_eq_all_maps:fol}.

\item\label{enum:th:KhM:Fol:Fibr}
Suppose in addition that $\GFolDistr$ is a Lie groups distribution and $\zeroker{\GFolDistr}$ is open in $\GFolDistr$.
Then in~\eqref{equ:diagam_homot_eq_all_maps:fol} two upper arrows $\tFibMap$ have local sections.

\item\label{enum:th:KhM:Fol:Restr}
Moreover, if the right vertical arrow $\tRestr\colon\Diff(\Foliation)\to\Diff(\fSing)$ has a local section, then the left arrow $\tRestr$ also has a local section.
In other words, every consecutive pair of vertical or diagonal arrows $\calP\monoArrow \calQ \to \calR$ constitute a fibration over its image.
\end{enumerate}

Also, if $\Xman\subset\Mman\setminus\fSing$ is a closed subset, then each of the statements~\ref{enum:th:KhM:Fol:HE}-\ref{enum:th:KhM:Fol:Restr} holds if we replace each group $\Diff_{*}(\Foliation, -)$ with the intersection $\Diff_{*}(\Foliation, -)\cap\DiffFix{\Mman}{\Xman}$ and also replace the symbol $\GFolDistr$ with $\GDistr_{\Foliation,\Xman}$.
\end{subtheorem}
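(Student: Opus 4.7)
The plan is to use Theorem~\ref{th:isot_nbh} and Addendum~\ref{add:main_th:X} as the technical backbone and to verify that each of the constructions there preserves leaf-preservation when restricted to $\Diff(\Foliation)$. The $\Foliation$-scalable hypothesis on the neighborhood of $\fSing$ is precisely what ensures that the scaling-based deformation operations of Theorem~\ref{th:isot_nbh} respect the foliation structure; the $\Foliation$-isolating hypothesis controls the leaf behavior near the ``singular'' submanifold $\fSing$.

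For part~\ref{enum:th:KhM:Fol:HE}, recall that the homotopy $\Hhmt$ from Theorem~\ref{th:isot_nbh} is essentially a rescaling $\dif\mapsto\dif_{t}$ with $\dif_{t}(\pv)=\tfrac{1}{t}\dif(t\pv)$ close to $\fSing$, glued smoothly to $\dif$ away from $\fSing$. The key observation is that for $\dif\in\Diff(\Foliation)$ and $\pv$ in a leaf $\omega\cap\Uman$, the point $t\pv$ lies in some leaf $\omega'$ and $\dif(t\pv)\in\omega'$; then $\Foliation$-scalability of $\Uman$ forces $\tfrac{1}{t}\dif(t\pv)$ and $\pv$ to lie in the same leaf, namely $\omega$. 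Hence $\dif_{t}\in\Diff(\Foliation)$ for every $t$, and the $\Foliation$-isolating condition governs the gluing region so that no confusion of neighboring leaves occurs there. Thus $\Hhmt$ restricts to a deformation of the triples displayed in~\eqref{equ:DMS_homot_equiv_fol}.

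For part~\ref{enum:th:KhM:Fol:Fibr}, I would build a local section $s$ of $\tFibMap$ near $\id_{\Eman}$ as follows. Given $\Alin\in\GLESGG{\GFolDistr}$ close to $\id_{\Eman}$, openness of $\zeroker{\GFolDistr}$ lets us assume $\Alin$ restricts to the identity on every fiber $\Eman_{\pbs}$ with $\dim\Gy{\pbs}=0$. Together with the fiberwise condition $\Alin|_{\Eman_{\pbs}}\in\Gy{\pbs}$ and $\Foliation$-scalability, this guarantees that $\Alin$ itself preserves leaves of $\restr{\Foliation}{\Uman}$ on a small scalable tube around $\fSing$. Define $s(\Alin)$ to coincide with $\Alin$ on a smaller tube and with the identity outside a slightly larger one, interpolating in between via a fiberwise linear path $t\mapsto(1-t)\id+t\Alin$ controlled by a radial bump function; scalability again ensures the interpolation remains leaf-preserving. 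The main obstacle lies precisely here: leaves generically cross many fibers, so fiberwise constraints on $\Alin$ must assemble into a global leaf-preservation property, and the cutoff must respect the leaf structure transversally as well as along fibers. The openness of $\zeroker{\GFolDistr}$ is what prevents $\Alin$ from permuting connected components of level sets that live in different fibers, while $\Foliation$-scalability supplies the ``radial rigidity'' that makes the cutoff leaf-preserving.

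For part~\ref{enum:th:KhM:Fol:Restr}, given the hypothesized local section $\sigma\colon\NbhId\to\Diff(\Foliation)$ of the right vertical $\tRestr$, I would compose $\sigma$ with the deformation of part~\ref{enum:th:KhM:Fol:HE}: applying $\Hhmt(\sigma(\gdif),1)$ produces a diffeomorphism coinciding with its linearization near $\fSing$, hence lying in $\DiffInv[\vbp]{\Foliation}{\fSing}$; property~(2) of Addendum~\ref{add:main_th:X} preserves restriction to $\fSing$, so this composite is a local section of the left $\tRestr$. The relative $\Xman$-version is obtained by invoking the fixed-on-$\Xman$ clauses of Addendum~\ref{add:main_th:X} at each step and by building the cutoff in part~\ref{enum:th:KhM:Fol:Fibr} with support disjoint from $\Xman$, which is possible because $\Xman\subset\Mman\setminus\fSing$ is closed.
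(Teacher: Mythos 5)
Your sketches of parts~\ref{enum:th:KhM:Fol:HE} and~\ref{enum:th:KhM:Fol:Restr} align with the paper's argument (part~\ref{enum:th:KhM:Fol:HE} amounts to invoking the $\Foliation$-invariance of $\Hhmt$ already shown in~\cite{KhokhliukMaksymenko:Nbh:2022}, and part~\ref{enum:th:KhM:Fol:Restr} is $\Hhmt_0\circ s$ together with Addendum~\ref{add:main_th:X}). The genuine gap is in your part~\ref{enum:th:KhM:Fol:Fibr}.

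You propose to interpolate between $\id_{\Eman}$ and $\Alin$ by the fiberwise affine path $t\mapsto(1-t)\id+t\Alin$ cut off by a radial bump. This is essentially the contraction used in~\cite{KhokhliukMaksymenko:Nbh:2022} for the non-foliated case, and it does \emph{not} work here. The point is that the leaf-preservation of the resulting diffeomorphism is controlled fiberwise by membership in the groups $\Gy{\pbs}\subset\Aut(\Eman_{\pbs})$ of the distribution $\GFolDistr$: by Lemma~\ref{lm:Fgood_nbh_props}\ref{enum:lm:Fgood_nbh_props:clGLESGG} the cut-off section preserves leaves only if at each radius the intermediate automorphism still restricts on each fiber to an element of $\Gy{\pbs}$. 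But $\Gy{\pbs}$ is a Lie subgroup of $\Aut(\Eman_{\pbs})$, not a convex subset; already for $\Gy{\pbs}=\Ort(n)$ (which is what occurs for Morse--Bott foliations, Lemma~\ref{lm:mbfol:Gy_is_closed}) the convex combination $(1-t)\id+t\Alin$ of orthogonal matrices is in general not orthogonal. So your interpolated map does not lie in $\GLESGG{\GFolDistr}$ and the glued diffeomorphism is generally not leaf-preserving. (Scalability cannot rescue this: it governs how radial rescaling interacts with leaves, not how affine averaging of linear maps does.)

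The paper replaces the affine interpolation by the contraction along fiberwise one-parameter subgroups, $\Ghmt(\Alin,t)=\exp\bigl(t\,\exp^{-1}(\Alin)\bigr)$, constructed in Lemma~\ref{lm:exp_props}\ref{enum:lm:exp_props:contr} from the fiberwise exponential $\exp\colon\EndES\to\GLES$. For this contraction, Lemma~\ref{lm:exp_props}\ref{enum:lm:exp_props:1-subgr} guarantees that if $\Alin|_{\Eman_{\pbs}}\in\Gy{\pbs}$ with $\dim\Gy{\pbs}\geqslant1$, then $\Ghmt(\Alin,t)|_{\Eman_{\pbs}}\in\Gy{\pbs}$ for all $t$; the openness of $\zeroker{\GFolDistr}$ handles the zero-dimensional fibers by forcing $\Alin|_{\Eman_{\pbs}}=\id_{\Eman_{\pbs}}$ there (Lemma~\ref{lm:char:0-uniform}). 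The local section is then defined exactly as you intended, $\sectrVB(\Alin)(\px)=\Ghmt\bigl(\Alin,\mu(\nrm{\px}/\eps)\bigr)(\px)$, but with this Lie-theoretic contraction in place of your affine one; that is the piece you are missing.
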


Theorem~\ref{th:KhM:Fol} allows to relate homotopy groups of $\Diff(\Foliation)$ via long exact sequences of homotopy groups of the corresponding fibrations with the homotopy groups of the groups $\Diff(\fSing)$, $\GLESGG{\GFolDistr}$, and $\DiffNb{\Foliation}{\fSing}$.
Examples of concrete computations will be given in Section~\ref{sect:applications}.

\begin{subremark}\rm
Note that the two homotopy equivalences in~\eqref{equ:diagam_homot_eq_all_maps:fol} denoted by dashed arrows $\xymatrix{\ar@{^(-->}[r]^{\simeq} &}$ are proved in~\cite{KhokhliukMaksymenko:Nbh:2022}, and we included them into Theorem~\ref{th:KhM:Fol} just to complete all the picture.
The principal new statement in Theorem~\ref{th:KhM:Fol} is the construction of local sections for the upper arrow $\tFibMap$, which will be done in Section~\ref{sect:proof:th:KhM:Fol}.

Also note that the diagram~\eqref{equ:diagam_homot_eq_all_maps:fol} only partially mimics the non-foliated variant~\eqref{equ:diagam_homot_eq_all_maps}: there is a lack of an analogue of $\GLESGG{\GFolDistr}$ for vector bundle automorphisms non-fixed on $\fSing$.
We will not discuss that in the present paper.
\end{subremark}

\subsection{Morse-Bott foliations}
We will show here that Theorem~\ref{th:KhM:Fol} holds for a certain class of Morse-Bott foliations.
Recall that we allow manifolds (as well as submanifolds) to be non-connected and that their connected components may have distinct dimensions.

Let $\vbp\colon\Eman\to\fSing$ be a smooth vector bundle over a smooth manifold $\fSing$.
A continuous function $\gfunc\colon\Eman\to\bR$ is called \term{fiberwise homogeneous of degree $k\geqslant0$}, (or \term{$k$-homogeneous}) if for each $\pv\in\Eman$ and $t\geqslant0$ we have that $\gfunc(t\pv) = t^k\gfunc(\pv)$.
Say that a $2$-homogeneous function $\gfunc\colon\Eman\to\bR$ is \term{non-degenerate} if its restriction to each fiber $\Eman_{\pbs}=\vbp^{-1}(\pbs)$, $\pbs\in\fSing$, is a non-degenerate quadratic form.

\begin{subexample}\label{exmp:mb-func-extr}\rm
Suppose we are given a smooth fiberwise scalar product on $\Eman$ regarded as a $\Cinfty$ map $(\cdot,\cdot): \Eman\oplus\Eman\to\bR$.
Then the following Morse-Bott function $\func:\Eman\to\bR$, $\func(\pv)=(\pv,\pv)$, satisfies the above conditions: it is $2$-homogeneous and $\fSing$ is the critical submanifold of $\func$ corresponding to the minimum value $0$.
\end{subexample}

\begin{subdefinition}\label{def:mf-fol-homog}
Let $\fSing$ be a proper submanifold of a manifold $\Mman$.
A partition $\Foliation$ of $\Mman$ will be called a \term{Morse-Bott foliation linearizable near the singular manifold $\fSing$} if
\begin{enumerate}[leftmargin=*]
\item connected components of $\fSing$ are leaves of $\Foliation$;
\item the restriction $\restr{\Foliation}{\Mman\setminus\fSing}$ is a usual $\Cinfty$ foliation of codimension $1$, and
\item there exist
\begin{enumerate}[label={\rm(\alph*)}]
\item a regular neighborhood $\vbp\colon\Eman\to\fSing$;
\item\label{enum:def:mf-fol-homog:homog} a non-degenerate $2$-homogeneous function $\gfunc\colon\Eman\to\bR$;
\item\label{enum:def:mf-fol-homog:triv-holom} and an open neighborhood $\Uman \subset \Eman$ of $\fSing$ such that for each leaf $\omega\in\Foliation$ the function $\gfunc$ takes a constant value on $\omega\cap\Uman$ whenever that intersection is non-empty.
\end{enumerate}
\end{enumerate}
\end{subdefinition}

There is an extensive literature on the topology Morse and Morse-Bott functions and foliations as well as their applications, see e.g.~\cite{Frankel:DCT:1965,Sharko:MFAT:1995, Prishlyak:UMJ:2000, Prishlyak:UMJ:2002, KadzisaMimura:JFPTA:2011, MartinezAlfaroMezaSarmientoOliveira:CM:2016, MartinezAlfaroMezaSarmientoOliveira:JDE:2016,  HladyshPrishlyak:UMJ:2016, MartinezAlfaroMezaSarmientoOliveira:TMNA:2018,  BatistaCostaMezaSarmiento:JS:2018, HladyshPrishlyak:JMPAG:2019, MaciasVirgos_PereiraSaez:IM:2020, HutchingsNelson:AGT:2020, Gelbukh:CMJ:2021, Gelbukh:SSMH:2022} and references therein.

\begin{subremark}\rm
Note that the usual definition of Morse-Bott foliations, e.g.~\cite{ScarduaSeade:JDG:2009, Evangelista-Alvarado_etall:JS:2019} requires only that every point $\pbs\in\fSing$ is a non-degenerate critical point for the restriction of $\gfunc$ to the fiber $\Eman_{\pbs}$.
Let $n=\dim\Eman_{\pbs}$.
Then, by Morse-Bott lemma, e.g.~\cite{BanyagaHurtubise:EM:2004}, there exists an open neighborhood $\Vman$ of $\pbs$ in $\fSing$, an open neighborhood $\Wman$ of $0$ in $\bR^{n}$, and an open embedding $\phi\colon\Vman\times\Wman\subset\Eman$ such that
\begin{enumerate}[label={\roman*)}]
\item $\phi(\px,0)=\px$ for all $\px\in\Vman\subset\fSing$;
\item $\gfunc\circ\phi(\px, \pvi{1},\ldots,\pvi{n}) = -\pvi{1}^2-\cdots-\pvi{k}^2 + \pvi{k+1}^2+\cdots+\pvi{n}^2$ for some $k\in\{0,\ldots,n-1\}$.
\end{enumerate}
Thus, we can locally change the regular neighborhood of a part $\Vman$ of $\fSing$ near $\pbs$ so that the restriction of $\gfunc$ to the fibers will be $2$-homogenenous.

The principal additional assumptions of Definition~\ref{def:mf-fol-homog} are
\begin{itemize}
\item
the ``linearizability'' condition~\ref{enum:def:mf-fol-homog:homog} requiring existence of a ``global'' regular neighborhood $\vbp:\Eman\to\fSing$ and a $2$-homogeneous function $\gfunc\colon\Eman\to\bR$ whose level sets define $\Foliation$ near $\fSing$,
\item
and condition~\ref{enum:def:mf-fol-homog:triv-holom} requiring that $\gfunc$ ``distinguishes'' (by its values) the leaves of $\Foliation$ in $\Uman$, which can be regarded as a triviality of holonomy of the singular leaves, i.e.\ connected components of $\fSing$.
\end{itemize}
I do not know whether for each Morse-Bott function $\gfunc$ on a manifold with a critical submanifold $\fSing$ one can choose a regular neighborhood $\vbp\colon\Eman\to\fSing$ such that $\restr{\gfunc}{\Eman}$ will be $2$-homogeneous near $\fSing$.
The difficulties appear even in the following situation.
Suppose $\Uman,\Vman \subset \fSing$ are two open subsets with $\Uman\cap\Vman\neq\varnothing$ and $\vbp\colon\Eman\to\fSing$ is a regular neighborhood of $\fSing$ such that $\gfunc$ is $2$-homogeneous near $\vbp^{-1}(\Uman)\cap\fSing$ and $\vbp^{-1}(\Vman)\cap\fSing$.
Then it is not even clear whether \term{there exists another regular neighborhood $\vbp'\colon\Eman'\to\fSing$ of $\fSing$ such that $\gfunc$ is $2$-homogeneous near $(\vbp')^{-1}(\Uman\cup\Vman) \cap \fSing$.}
It the latter property were true, then it could be extended at least to all compact%
\footnote{The author is grateful to the anonymous Referee for mentioning case of compact manifolds.}
manifolds $\fSing$.
\end{subremark}

\begin{subtheorem}\label{th:main_res_mb}
Let $\Foliation$ be a Morse-Bott foliation on a manifold $\Mman$ being linearizable near the singular manifold $\fSing$, and $\Xman\subset\Mman\setminus\fSing$ be a closed subset.
Then
\begin{enumerate}[label={\rm(\arabic*)}]
\item\label{enum:th:main_res_mb:Fgood} there exists a regular neighborhood $\vbp\colon\Eman\to\fSing$ of $\fSing$ for which $\fSing$ has an \NGOOD{$\Foliation$}\ neighborhood,
\item\label{enum:th:main_res_mb:0-uniform} the corresponding distribution of group $\GDistr_{\Foliation}$, see~\eqref{equ:GDistroFol}, is a Lie groups distribution and its $0$-kernel $\zeroker{\GDistr_{\Foliation}}$ is open;
\item\label{enum:th:main_res_mb:sect} the map $\tRestr\colon\DiffInv{\Foliation}{\fSing}\to\Diff(\fSing)$ has a local section.
\end{enumerate}
Hence, all statements of Theorem~\ref{th:KhM:Fol} hold for $\Foliation$, i.e.\ the inclusions~\eqref{equ:DMS_homot_equiv_fol} are homotopy equivalences and all arrows $\tFibMap$ and $\tRestr$ in~\eqref{equ:diagam_homot_eq_all_maps:fol} have local sections.
\end{subtheorem}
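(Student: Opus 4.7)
The plan is to verify the three technical conditions \ref{enum:th:main_res_mb:Fgood}--\ref{enum:th:main_res_mb:sect}, which are precisely the hypotheses needed to invoke Theorem~\ref{th:KhM:Fol}; the final assertion then follows at once. Throughout, let $O_{\pbs}:=\{A\in\Aut(\Eman_{\pbs}) : \gfunc(A\pv)=\gfunc(\pv)\ \text{for all}\ \pv\in\Eman_{\pbs}\}$ denote the linear orthogonal group of the non-degenerate quadratic form $\restr{\gfunc}{\Eman_{\pbs}}$ supplied by Definition~\ref{def:mf-fol-homog}, and $O_{\pbs}^{0}$ its identity component.

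For~\ref{enum:th:main_res_mb:Fgood}, I would apply Lemma~\ref{lm:example_Fgood} to $\gfunc$. Condition~(1) of that lemma holds with $k=2$, and for condition~(2), each level set $\gfunc^{-1}(a)$ with $a\ne 0$ is a locally trivial bundle of affine quadrics over the compact base $\fSing$, hence has only finitely many path components, each closed in the closed set $\gfunc^{-1}(a)\subset\Eman$. Consequently every open neighborhood of $\fSing$ in $\Eman$ is simultaneously $\Foliation_{\gfunc}$-isolating and $\Foliation_{\gfunc}$-scalable; intersecting with the neighborhood $\Uman$ from Definition~\ref{def:mf-fol-homog}\ref{enum:def:mf-fol-homog:triv-holom} (on which $\Foliation$ coincides with $\Foliation_{\gfunc}$) yields the required \NGOOD{$\Foliation$}\ neighborhood of $\fSing$.

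For~\ref{enum:th:main_res_mb:0-uniform}, the first step is to show $O_{\pbs}^{0}\subset\Gy{\pbs}\subset O_{\pbs}$. The inclusion $\subset$ follows by differentiation: any $\dif\in\DiffFix{\Foliation}{\fSing}$ preserves level sets of $\gfunc$ near $\fSing$, and combined with $2$-homogeneity this gives
\begin{align*}
    \gfunc\bigl(\tfib{\dif}(\pv)\bigr)
    &=\lim_{t\to 0}t^{-2}\gfunc\bigl(\dif(t\pv)\bigr) \\
    &=\lim_{t\to 0}t^{-2}\gfunc(t\pv)=\gfunc(\pv).
\end{align*}
For the containment $O_{\pbs}^{0}\subset\Gy{\pbs}$, I would choose a local trivialization $\vbp^{-1}(V)\cong V\times\bR^{n}$ in which $\gfunc$ takes the constant standard form $Q_{0}$ (available by smooth diagonalization of a smooth family of non-degenerate quadratic forms with locally constant signature); given $A\in O_{\pbs}^{0}$ together with a smooth path $A_{t}\in O(Q_{0})$ from $\id$ to $A$, and a bump function $\psi\colon\fSing\to[0;1]$ with $\psi(\pbs)=1$ whose compact support lies inside $V$ and inside the good neighborhood from~\ref{enum:th:main_res_mb:Fgood}, the assignment $\dif(\px,\pv):=(\px,A_{\psi(\px)}\pv)$, extended by identity outside the chart, is a $\Foliation$-leaf preserving diffeomorphism with $\restr{(\tfib{\dif})}{\Eman_{\pbs}}=A$. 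Since $O_{\pbs}$ is a closed Lie subgroup of $\Aut(\Eman_{\pbs})$ by Lemma~\ref{lm:pres_func}, it follows that $\Gy{\pbs}$ is a union of connected components of $O_{\pbs}$, hence a closed Lie subgroup. In particular $\dim\Gy{\pbs}=\dim O_{\pbs}=\binom{n}{2}$ with $n=\dim\Eman_{\pbs}$, so $\Yman_{0}$ is exactly the union of those components of $\fSing$ whose fiber is $1$-dimensional, a finite disjoint union $\fSing'_{1}\sqcup\cdots\sqcup\fSing'_{N}$ by compactness. On each $\fSing'_{i}$ the value $\restr{A}{\Eman_{\pbs}}\in\{\pm\id\}$ is locally, hence globally, constant in $\pbs$; therefore $\GLESGG{\GFolDistr}/\zeroker{\GFolDistr}$ injects into $\{\pm1\}^{N}$ and openness of $\zeroker{\GFolDistr}$ follows from Lemma~\ref{lm:G_contractible}\ref{enum:lm:0-uniform:quotient_finite}.

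Condition~\ref{enum:th:main_res_mb:sect} is the leaf-preserving counterpart of Theorem~\ref{th:DinvMS_split} and is established in~\cite{KhokhliukMaksymenko:IndM:2020}; it is cited directly. With~\ref{enum:th:main_res_mb:Fgood}--\ref{enum:th:main_res_mb:sect} in hand, Theorem~\ref{th:KhM:Fol} applies and finishes the proof. The main obstacle in this plan is the bump-function construction inside~\ref{enum:th:main_res_mb:0-uniform}: one must simultaneously arrange a local trivialization in which $\gfunc$ is literally standard (not merely of constant signature) and keep the support of the resulting $\dif$ inside the set where $\Foliation$ coincides with $\Foliation_{\gfunc}$, so that fiberwise preservation of $\gfunc$ upgrades to preservation of leaves of $\Foliation$; this is precisely where the linearizability assumption~\ref{enum:def:mf-fol-homog:triv-holom} is used.
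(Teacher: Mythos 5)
Your overall strategy matches the paper's: item~\ref{enum:th:main_res_mb:Fgood} via Lemma~\ref{lm:example_Fgood}, item~\ref{enum:th:main_res_mb:sect} by citing~\cite{KhokhliukMaksymenko:IndM:2020}, and item~\ref{enum:th:main_res_mb:0-uniform} by sandwiching $\Gy{\pbs}$ between $O_{\pbs}^{0}$ and $O_{\pbs}$, observing that $\Yman_0$ is a finite union of components of $\fSing$, and reducing to Lemma~\ref{lm:G_contractible}\ref{enum:lm:0-uniform:quotient_finite}. The upper inclusion $\Gy{\pbs}\subset O_{\pbs}$ via $2$-homogeneity and the finiteness reduction are correct and essentially coincide with the paper's Lemma~\ref{lm:mbfol_prop} and the last lemma of Section~\ref{sect:proof:th:main_res_mb}.

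The construction offered for $O_{\pbs}^{0}\subset\Gy{\pbs}$, however, does not work as written, and the ``obstacle'' you flag at the end is not merely a delicacy but a genuine gap. The map $\dif(\px,\pv)=(\px,A_{\psi(\px)}\pv)$ is fiberwise linear, so on $\vbp^{-1}(\{\psi\neq0\})$ it acts nontrivially on the \emph{entire} fiber $\bR^{n}$; its support is therefore not contained in any prescribed neighborhood of $\fSing$. As a consequence: (i) extending by the identity does \emph{not} yield a diffeomorphism of $\Mman$, since $\dif$ is not the identity near the frontier of the open set $\Eman$ inside $\Mman$; and (ii) even viewed on $\Eman$, $\dif$ only preserves level sets of $\gfunc$, which coincide with leaves of $\Foliation$ only inside the good neighborhood $\Uman$, so leaf-preservation fails farther out. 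The paper's proof of Lemma~\ref{lm:mbfol:Gy_is_closed} fixes exactly this by introducing a \emph{second} bump function $\lambda$ in the fiber variable and working with $G(\Blin,\px,\pv)=\bigl(\px,e^{\mu(\px)\lambda(\pv)\Blin}\pv\bigr)$: this localizes the modification to a compact set inside $\Uman$, at the cost that $G_{\Blin}$ is guaranteed to be a diffeomorphism only for $\Blin$ in a small neighborhood of $0$ in the Lie algebra. One therefore obtains only a neighborhood $\VV$ of $\id$ in $\Gy{\pbs}$, and then uses the group structure (a neighborhood of $\id$ generates $\SO^{+}(k,n-k)$) to conclude $O_{\pbs}^{0}\subset\Gy{\pbs}$. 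Your path-based argument, which aims to realize an arbitrary $A\in O_{\pbs}^{0}$ directly, would not survive the insertion of the fiber cutoff without additional smallness restrictions, which is a second reason the paper passes through a neighborhood of the identity rather than the full component.
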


\subsection{Structure of the rest of the paper}
In Section~\ref{sect:rem:proof:th:isot_nbh} we discuss the proof of Theorem~\ref{th:isot_nbh}, and in Section~\ref{sect:proof:th:main_res_mb} prove Theorem~\ref{th:main_res_mb}.
In Section~\ref{sect:scalable_partitions} we establish several properties of scalable partitions.
In Section~\ref{sect:vb_automorphisms} for a vector bundle $\vbp\colon\Eman\to\fSing$ we describe the construction of a fiberwise exponential map for the group of vector bundle automorphisms $\GLES$, and construct a specific contraction ``along one-parameter subgroups'' of a certain neighborhood of $\id_{\Eman}$ in $\GLES$.
As a consequence we also prove Lemma~\ref{lm:char:0-uniform} claiming that openness of $0$-kernel for a Lie groups distribution is equivalent to local contractibility of the group $\GLESGG{\GDistr}$.

Theorem~\ref{th:KhM:Fol} will be proved in Section~\ref{sect:proof:th:KhM:Fol}.
Finally, in Section~\ref{sect:applications} we apply Theorem~\ref{th:KhM:Fol} to computations of homotopy types of groups $\Diff(\Foliation)$ for certain foliations with only extreme singularities.
In particular, we prove Theorem~\ref{th:Omega_On_O2__DFol} which give a short proof of some results from~\cite{KhokhliukMaksymenko:JHRS:2023}.

\section{Remarks to the proof of Theorem~\ref{th:isot_nbh}}\label{sect:rem:proof:th:isot_nbh}
\begin{enumerate}[wide, itemsep=1ex, labelwidth=!, labelindent=0pt, label={\rm(\Alph*)}]
\item
For the middle vertical sequence $\DiffFix{\Mman}{\fSing} \monoArrow \DiffInv{\Mman}{\fSing} \to \Diff(\fSing)$ statement~\ref{th:isot_nbh:DfMSp_split} is exactly Theorem~\ref{th:DinvMS_split}, and it is included into the formulation of Theorem~\ref{th:isot_nbh} just for the completeness of all the picture.

\item\label{enum:rem:th:isot_nbh:proof_1}
Statement~\ref{th:isot_nbh:linearization} is a \term{linearization theorem} being mutually a multi-parameter, compactly supported, and actually even foliated variant of the well known statement that each diffeomorphism $\dif\colon\bR^n \to \bR^n$ with $\dif(0)=0$ is isotopic to the linear isomorphism induced by its Jacobi matrix at $0\in\bR^n$, as well as a variant of a more general theorem on existence of isotopy between regular neighborhoods of $\fSing$.
First let us recall the following easy extension of the Hadamard lemma to maps between total spaces of vector bundles:
\begin{lemma}[{\rm e.g.~\cite{KhokhliukMaksymenko:Nbh:2022}}]
\label{lm:Hadamard_for_vb}
Let $\vbp\colon\Eman\to\fSing$ be a vector bundle over a compact manifold $\fSing$, and $\dif\colon\Vman_{\dif}\to\Eman$ be a $\Cinfty$ map defined on some neighborhood $\Vman_{\dif}$ of $\fSing$ such that $\dif(\fSing)=\fSing$.
Then there exists a unique $\Cinfty$ map $\divhom_{\dif}\colon\Vman_{\dif}\times[0;1]\to\Eman$ such that
\begin{enumerate}[leftmargin=8ex, label={\rm(\alph*)}]
\item\label{enum:lm:Hadamard_for_vb:ht} $\dif(\tau\px) = \tau\divhom_{\dif}(\px,\tau)$, for all $(\px,\tau)\in\Vman_{\dif}\times[0;1]$;
\item\label{enum:lm:Hadamard_for_vb:h0} $\divhom_{\dif}(\px,0)=\tfib{\dif}(\px)$ for all $\px\in\Vman_{\dif}$;
\item\label{enum:lm:Hadamard_for_vb:sing_inv} $\divhom_{\dif}(\px,\tau)=\dif(\px)$ for all $(\px,\tau)\in\fSing\times[0;1]$.
\qedhere
\end{enumerate}
\end{lemma}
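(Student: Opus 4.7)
The plan is to combine a uniqueness argument with a local application of the classical Hadamard lemma in the fiber direction; patching is then automatic.

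First I would establish uniqueness. For $\tau>0$, scalar multiplication by $\tau$ on $\Eman$ preserves fibers of $\vbp$, so condition~\ref{enum:lm:Hadamard_for_vb:ht} forces
\[
    \divhom_{\dif}(\px,\tau)=\tau^{-1}\dif(\tau\px)
\]
within the relevant fiber, which already pins $\divhom_{\dif}$ down for $\tau>0$. Continuity at $\tau=0$ together with~\ref{enum:lm:Hadamard_for_vb:h0} then forces $\divhom_{\dif}(\px,0)=\tfib{\dif}(\px)$, so at most one smooth candidate exists. (If necessary I would shrink $\Vman_{\dif}$ to ensure $\tau\px\in\Vman_{\dif}$ for all $\tau\in[0;1]$, which is implicit in~\ref{enum:lm:Hadamard_for_vb:ht}.)

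Next I would produce $\divhom_{\dif}$ locally and verify smoothness. Around any $\pbs\in\fSing$ I would pick local trivializations of $\vbp$ near both $\pbs$ and $\dif(\pbs)\in\fSing$. In these coordinates $\dif$ takes the form $(\pb,\pv)\mapsto(\alpha(\pb,\pv),\beta(\pb,\pv))$, and the hypothesis $\dif(\fSing)=\fSing$ gives $\beta(\pb,0)\equiv 0$. Applying Hadamard's lemma to $\beta$ in the fiber variable yields
\[
    \beta(\pb,\tau\pv)\;=\;\int_{0}^{1}\tfrac{d}{ds}\beta(\pb,s\tau\pv)\,ds\;=\;\tau\int_{0}^{1}\ddd{\beta}{\pv}(\pb,s\tau\pv)\cdot\pv\,ds,
\]
so the formula
\[
    \divhom_{\dif}(\pb,\pv,\tau)\;:=\;\Bigl(\alpha(\pb,\tau\pv),\ \int_{0}^{1}\ddd{\beta}{\pv}(\pb,s\tau\pv)\cdot\pv\,ds\Bigr)
\]
is smooth on the chart and satisfies~\ref{enum:lm:Hadamard_for_vb:ht} by construction. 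At $\tau=0$ its fiber component reduces to $\ddd{\beta}{\pv}(\pb,0)\cdot\pv=\tfib{\dif}(\px)$, giving~\ref{enum:lm:Hadamard_for_vb:h0}. Condition~\ref{enum:lm:Hadamard_for_vb:sing_inv} is automatic: for $\px\in\fSing$, i.e.\ $\pv=0$, both sides of~\ref{enum:lm:Hadamard_for_vb:ht} collapse to the zero vector $\dif(\px)\in\fSing$.

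Finally, uniqueness forces the locally built smooth maps to coincide on chart overlaps, so they assemble into the required global smooth $\divhom_{\dif}\colon\Vman_{\dif}\times[0;1]\to\Eman$. The main obstacle is not mathematical but notational: because $\dif$ need not preserve fibers of $\vbp$, one must work with two trivializations simultaneously in order to isolate the ``vertical'' component $\beta$ to which Hadamard's lemma applies. Once this bookkeeping is in place, the construction is a direct parametrized version of the scalar Hadamard lemma, and compactness of $\fSing$ plays no essential role in this particular step.
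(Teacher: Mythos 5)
Your argument is correct and is, in substance, the expected proof of this parametrized Hadamard lemma: for $\tau>0$ condition~\ref{enum:lm:Hadamard_for_vb:ht} forces $\divhom_{\dif}(\px,\tau)=\tau^{-1}\dif(\tau\px)$, the integral form of Hadamard's lemma in a pair of trivializing charts (one near $\pbs$, one near $\dif(\pbs)$, since $\dif$ need not preserve fibers) shows this map extends $\Cinfty$-smoothly to $\tau=0$ with the value $\tfib{\dif}(\px)$, and the local patches glue by the uniqueness you established. The paper itself does not reprove this lemma but only cites it to \cite{KhokhliukMaksymenko:Nbh:2022}, so there is no in-paper proof to compare against; your side remark that compactness of $\fSing$ is not used here is also accurate, as the construction is purely local (one only needs $\Vman_{\dif}$ to be star-shaped with respect to the fiberwise scaling, which you note).
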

Due to~\ref{enum:lm:Hadamard_for_vb:ht} we have that $\divhom_{\dif}(\px,\tau\px)=\frac{1}{\tau}\dif(\tau\px)$ for $\tau>0$.
The main observation of Theorem~\ref{th:isot_nbh} is that in Lemma~\ref{lm:Hadamard_for_vb} one can do two additional technical things:
\begin{itemize}[leftmargin=*, label={$-$}]
\item modify $\divhom_{\dif}$ near $\partial\Vman_{\dif}$ so that it will extend by the identity to a $\Cinfty$ diffeomorphism of $\Mman$;
\item for each $\dif\in\DiffInv{\Mman}{\fSing}$ ``uniformly'' choose a neighborhood $\Vman_{\dif}$ of $\fSing$ such that $\dif(\Vman_{\dif}) \subset\Eman$, and ``uniformly'' modify the corresponding homotopy $\divhom_{\dif}$ as in the previous item.
\end{itemize}
More precisely, choose some orthogonal structure on the fibers of $\Eman$ and let $\nrm{\cdot}\colon\Eman\to[0;+\infty)$ be the corresponding norm.
Also fix any $\Cinfty$ function $\mu\colon\bR\to[0;1]$ such that $\mu=0$ on $[0;\aConst]$ and $\mu=1$ on $[\bConst;+\infty)$.
Then it is shown in~\cite{KhokhliukMaksymenko:Nbh:2022} that there exists a continuous function $\delta\colon\DiffInv{\Mman}{\fSing}\to(0;+\infty)$ such that if we define:
\begin{align*}
    &\phi\colon\DiffInv{\Mman}{\fSing}\times[0;1]\times\Eman\to[0;1],&
    &\phi(\dif,t,\px) = t + (1-t) \mu\bigl(\tfrac{\nrm{\px}}{\delta(\dif)}\bigr),
\end{align*}
then the following map $\Hhmt\colon\DiffInv{\Mman}{\fSing} \times [0;1]\to \DiffInv{\Mman}{\fSing}$
\[
    \Hhmt(\dif,t)(\px) =
    \begin{cases}
        \dif(\px), & \px\in\Mman\setminus\Eman, \\
        \divhom_{\dif}(\px, \phi(\dif,t,\px)\px), & \px\in\Eman,
    \end{cases}
\]
is a homotopy having the following properties:
\begin{enumerate}[label={(\alph*)}]
\item\label{enum:H:prop:1} $\Hhmt_1=\id_{\DiffInv{\Mman}{\fSing}}$;
\item\label{enum:H:prop:3} $\Hhmt_t(\DiffInv[\vbp]{\Mman}{\fSing}) \subset \DiffInv[\vbp]{\Mman}{\fSing}$;
\item\label{enum:H:prop:2} $\Hhmt_0(\DiffInv{\Mman}{\fSing}) \subset \DiffInv[\vbp]{\Mman}{\fSing}$;
\item\label{enum:H:prop:4} $\Hhmt_t(\DiffFix{\Mman}{\fSing}) \subset \DiffFix{\Mman}{\fSing}$;
\item\label{enum:H:prop:5} $\tfib{\Hhmt_t(\dif)} = \tfib{\dif}$.
\end{enumerate}
The first three properties mean that $\Hhmt$ is a \term{deformation of $\DiffInv{\Mman}{\fSing}$ into $\DiffInv[\vbp]{\Mman}{\fSing}$}, so the inclusion $\DiffInv[\vbp]{\Mman}{\fSing}\subset\DiffInv{\Mman}{\fSing}$ is a homotopy equivalence.

Moreover, condition~\ref{enum:H:prop:4} says that $\DiffFix{\Mman}{\fSing}$ is invariant under $\Hhmt$, and therefore the inclusion
\[
    \DiffFix[\vbp]{\Mman}{\fSing} \equiv
    \DiffInv[\vbp]{\Mman}{\fSing} \cap \DiffFix{\Mman}{\fSing}
    \ \subset \
    \DiffFix{\Mman}{\fSing}
\]
is also a homotopy equivalence.

Finally, condition~\ref{enum:H:prop:5} implies, in particular, that $\Hhmt_t(\DiffFix{\Mman}{\fSing, 1}) \subset \DiffFix{\Mman}{\fSing, 1}$ for all $t\in[0;1]$.
Indeed, if $\dif\in\DiffFix{\Mman}{\fSing, 1}$, i.e.\ $\tfib{\dif}=\id_{\Eman}$, then by~\ref{enum:H:prop:5}, $\tfib{\Hhmt_t(\dif)} = \tfib{\dif}=\id_{\Eman}$ for all $t\in[0;1]$, and thus $\Hhmt_t(\dif)\in\DiffFix{\Mman}{\fSing, 1}$ as well.
Hence, the inclusion
\[
    \DiffNb{\Mman}{\fSing}
    \equiv
    \DiffFix[\vbp]{\Mman}{\fSing} \cap \DiffFix{\Mman}{\fSing, 1}
    \ \subset \
    \DiffFix{\Mman}{\fSing, 1}
\]
is a homotopy equivalence (which was not mentioned in~\cite{KhokhliukMaksymenko:Nbh:2022}).

\item
Statements concerning dashed arrows are not mentioned in~\cite{KhokhliukMaksymenko:Nbh:2022}, however their proofs are implicitly presented therein.

Namely, let us prove~\ref{th:isot_nbh:DfMSp_split} for dashed arrows $\tFibMap$.
Notice that $\DiffInv[\vbp]{\Mman}{\fSing} \subset \DiffInv{\Mman}{\fSing}$ and both these groups are mapped into $\GLE$ with the same map $\tFibMap$.
This implies that the local section $\GLE \supset  \NbhId  \xrightarrow{~s~} \DiffInv[\vbp]{\Mman}{\fSing}$ of the mapping $\tFibMap\colon \DiffInv[\vbp]{\Mman}{\fSing} \to \GLE$ \term{indeed constructed in~\cite{KhokhliukMaksymenko:Nbh:2022}}, is automatically a section of $\tFibMap\colon \DiffInv{\Mman}{\fSing} \to \GLE$.
Moreover, the restriction of $s$ to $\NbhId\cap\GLES$ is a local section of $\tFibMap\colon\DiffFix{\Mman}{\fSing} \to \GLES$.
\end{enumerate}

\section{Proof of Theorem~\ref{th:main_res_mb}}\label{sect:proof:th:main_res_mb}
Statement~\ref{enum:th:main_res_mb:Fgood} follows from Lemma~\ref{lm:example_Fgood}, and~\ref{enum:th:main_res_mb:sect} is proved in~\cite[Theorem~4.3]{KhokhliukMaksymenko:IndM:2020}.

\ref{enum:th:main_res_mb:0-uniform}
Let $\vbp\colon\Eman\to\fSing$, $\gfunc\colon\Eman\to\bR$, and $\Uman$ be the same as in Definition~\ref{def:mf-fol-homog}.
Due to Lemma~\ref{lm:G_contractible}\ref{enum:lm:0-uniform:quotient_finite} it suffices to show that {\em for each $\pbs\in\fSing$ the group
\[
    \Gy{\pbs} :=
    \{
        \restr{(\tfib{\dif})}{\Eman_{\pbs}}
        \mid
        \dif\in\DiffFix{\Foliation}{\fSing}
    \}
\]
is closed in $\Aut(\Eman_{\pbs})$ and $\zeroker{\GFolDistr}$ has finite index in $\GLESGG{\GFolDistr}$.}
We need few lemmas.

\begin{lemma}\label{lm:mbfol_prop}
Let $\dif\in\Diff(\Foliation)$.
Then the following statements hold.
\begin{enumerate}[leftmargin=*, label={\rm(\alph*)}]
\item\label{enum:lm:mbfol_prop:h_pres_F}
Let $\Uman':=\dif^{-1}(\Uman)\cap\Uman$ be the maximal open neighborhood of $\fSing$ in $\Uman$ such that $\dif(\Uman')\subset\Uman$.
Then $\gfunc(\dif(\px))=\gfunc(\px)$ for all $\px\in\Uman'$.

\item\label{enum:lm:mbfol_prop:Th_pres_F}
There exists a smaller neighborhood $\Uman'' \subset \Uman'$ of $\fSing$ such that
\begin{equation}\label{equ:g_Th__g}
    \gdif(\tfrac{1}{\tau}\dif(\tau\px))= \gdif(\tfib{\dif}(\px))= \gdif(\px)
\end{equation}
for all $\tau\in[0;1]$ and $\px\in\Uman''$.
\end{enumerate}
\end{lemma}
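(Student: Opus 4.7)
My plan is to deduce part (a) directly from the leaf-preservation of $\dif$ together with condition~\ref{enum:def:mf-fol-homog:triv-holom} of Definition~\ref{def:mf-fol-homog}, and then to obtain part (b) by restricting to a star-shaped subneighborhood of $\Uman'$ and invoking the $2$-homogeneity of $\gfunc$. For (a), I fix $\px\in\Uman'$ and let $\omega\in\Foliation$ be the leaf through $\px$. Since $\dif\in\Diff(\Foliation)$ preserves $\omega$, we have $\dif(\px)\in\omega$. By the very definition of $\Uman'$, both $\px$ and $\dif(\px)$ lie in $\Uman$, so they both belong to the nonempty set $\omega\cap\Uman$. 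Condition~\ref{enum:def:mf-fol-homog:triv-holom} then asserts that $\gfunc$ is constant on $\omega\cap\Uman$, whence $\gfunc(\dif(\px))=\gfunc(\px)$. The corner case $\px\in\fSing$ is automatic because $\gfunc$ vanishes on the zero section by $2$-homogeneity.

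For part (b), I will take
\[
    \Uman'' := \{\px\in\Uman' \mid t\px\in\Uman' \text{ for every } t\in[0;1]\}.
\]
Openness of $\Uman''$ follows from the tube lemma applied to the continuous scaling map $[0;1]\times\Eman\to\Eman$, $(t,\pv)\mapsto t\pv$, together with compactness of $[0;1]$. The inclusion $\fSing\subset\Uman''$ holds because each $\pbs\in\fSing$ is the zero vector in its fiber and hence is fixed by every scaling. With this choice, the segment $[0;1]\cdot\px$ lies in $\Uman'$ for every $\px\in\Uman''$, which is precisely what is needed to apply (a) at the scaled point $\tau\px$.

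The final computation is then immediate. For $\px\in\Uman''$ and $\tau\in(0;1]$, applying (a) to $\tau\px\in\Uman'$ and using the identity $\gfunc(t\pv)=t^2\gfunc(\pv)$ yields
\[
    \gfunc\bigl(\tfrac{1}{\tau}\dif(\tau\px)\bigr) = \tfrac{1}{\tau^2}\gfunc(\dif(\tau\px)) = \tfrac{1}{\tau^2}\gfunc(\tau\px) = \gfunc(\px).
\]
By~\eqref{equ:Tfibh}, $\tfib{\dif}(\px)=\lim_{\tau\to 0^+}\tfrac{1}{\tau}\dif(\tau\px)$, so continuity of $\gfunc$ propagates the identity above to the limit, giving $\gfunc(\tfib{\dif}(\px))=\gfunc(\px)$. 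The only step that genuinely requires care is the construction of $\Uman''$ with its star-convex property; everything else is an algebraic manipulation of $2$-homogeneity alongside the leaf-preservation identity from (a), and I do not foresee any serious obstacle.
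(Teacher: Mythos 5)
Your proof is correct and essentially the same as the paper's: part (a) is verbatim the paper's argument, and the computation in (b) (apply $2$-homogeneity, then (a) at the scaled point $\tau\px$, then continuity of $\gfunc$ as $\tau\to0^{+}$) is also the same. The only difference lies in how $\Uman''$ is obtained — you take the star-convex core of $\Uman'$, which makes explicit that $\tau\px\in\Uman'$ so that (a) is applicable, whereas the paper invokes the map $\divhom$ from Lemma~\ref{lm:Hadamard_for_vb} and a compactness argument to shrink $\Uman''$ so that $\divhom(\overline{\Uman''}\times[0;1])\subset\Uman$; both choices serve the identical purpose.
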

\begin{proof}
\ref{enum:lm:mbfol_prop:h_pres_F}
Let $\px\in\Uman'$ and $\omega$ be the leaf of $\Foliation$ containing $\px$.
Then $\px \in \omega\cap\Uman' \subset \omega\cap\Uman$ and $\dif(\px) \in \omega \cap \dif(\Uman') \subset \omega\cap\Uman$ as well.
Since $\gfunc$ takes a constant value on all of $\omega\cap\Uman$, we obtain that $\gfunc(\dif(\px))=\gfunc(\px)$.

\ref{enum:lm:mbfol_prop:Th_pres_F}
By Lemma~\ref{lm:Hadamard_for_vb} there exists a $\Cinfty$ map $\divhom\colon\Uman'\times[0;1]\to\Eman$ such that $
\divhom(\px,\tau\px) = \tfrac{1}{\tau}\dif(\tau\px)$ and $\divhom(\px,0)=\tfib{\dif}(\px)$ for all $\px\in\Uman'$ and $\tau\in(0;1]$.
Then the identity~\eqref{equ:g_Th__g} means that $\gdif\bigl(\divhom(\px,\tau\px)\bigr)=\gdif(\px)$.

Since $\divhom(\fSing\times[0;1])=\fSing$ and $\fSing$ is compact, there exists another neighborhood $\Uman'' \subset \Uman'$ with compact closure $\overline{\Uman''}$ such that $\divhom(\overline{\Uman''}\times[0;1]) \subset\Uman$.
Then for all $\px\in\Uman''$ and $\tau\in(0;1]$ we have that
\[
    \gfunc\bigl(\tfrac{1}{t}\dif(t\px)\bigr) =
    \tfrac{1}{t^2}\gfunc\bigl(\dif(t\px)\bigr) \stackrel{\ref{enum:lm:mbfol_prop:h_pres_F}}{=}
    \tfrac{t^2}{t^2}\gfunc(\px) =
    \gfunc(\px),
\]
i.e.\ $\gdif\bigl(\divhom(\px,\tau\px)\bigr)=\gdif(\px)$.
Hence, $\gdif(\px) = \gdif\bigl(\divhom(\px,0)\bigr) = \gfunc\bigl( \tfib{\dif}(\px)\bigr)$ by continuity of $\gdif$.
\end{proof}

\begin{lemma}\label{lm:local_triv_2form}
Let $\pbs\in\fSing$, $n=\dim\Eman_{\pbs}$, and $k$ be the index of the non-degenerate $2$-form $\restr{\gfunc}{\Eman_{\pbs}}:\Eman_{\pbs} \to \bR$.
Then there exist an open neighborhood $\Vman$ of $\pbs$ and a local trivialization $\phi:\Vman\times\bR^{n}\to\Eman$ of $\vbp$ over $\Vman$ such that $\psi(\px,0)=(\px,0)$ and
\begin{equation}\label{equ:quadr_form_ind_k}
    \gfunc\circ\phi(\px,\pvi{1},\ldots,\pvi{n}) = -\pvi{1}^2-\cdots-\pvi{k}^{2}+\pvi{k+1}^2+\cdots+\pvi{n}^{2}
\end{equation}
for all $\px\in\Vman$ and $\pv=(\pvi{1},\ldots,\pvi{n})\in\bR^{n}$.
\end{lemma}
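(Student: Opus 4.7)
The plan is to reduce this to a parametric version of Sylvester's law of inertia. First pick an arbitrary smooth local trivialization $\psi\colon\Vman\times\bR^{n}\to\vbp^{-1}(\Vman)$ of $\vbp$ over a sufficiently small open neighborhood $\Vman$ of $\pbs$; since $\vbp$ is a vector bundle, any such $\psi$ automatically sends the zero section to itself. The pull-back $\gfunc\circ\psi(\px,\pv)$ is smooth in $(\px,\pv)$, and by the $2$-homogeneity of $\gfunc$ in $\pv$ it equals $\pv^{T}M(\px)\pv$ for a unique symmetric matrix $M(\px)\in\MatR{n}$, giving a smooth map $M\colon\Vman\to\MatR{n}$. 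By hypothesis $M(\pbs)$ is non-degenerate of signature $(n-k,k)$, and both conditions define an open subset $\calS_{k}$ of the symmetric matrices (non-degeneracy is openness of the determinant being non-zero, and the index is locally constant on non-degenerate forms); so after shrinking $\Vman$ we may assume $M(\Vman)\subset\calS_{k}$.

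The main step is to construct a smooth family $A\colon\Vman\to\GLR{n}$ with $A(\px)^{T}M(\px)A(\px)=D_{k}$, where $D_{k}=\mathrm{diag}(-1,\ldots,-1,+1,\ldots,+1)$ has $k$ minus signs. Granting this, the desired trivialization is $\phi(\px,\pv):=\psi(\px,A(\px)\pv)$. Indeed, $\phi$ is again a local trivialization, being the composition of $\psi$ with a smooth fiberwise linear automorphism; it sends $(\px,0)$ to $(\px,0)$; and it satisfies
\[
    \gfunc\circ\phi(\px,\pv)=\pv^{T}A(\px)^{T}M(\px)A(\px)\pv=\pv^{T}D_{k}\pv,
\]
which is exactly~\eqref{equ:quadr_form_ind_k}.

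The construction of $A$ goes as follows. The group $\GLR{n}$ acts on $\calS_{k}$ by $A\cdot M:=(A^{-1})^{T}MA^{-1}$; by Sylvester's law of inertia this action is transitive on $\calS_{k}$, and the stabilizer of $D_{k}$ is the pseudo-orthogonal group $\Ort(n-k,k)$. Hence the orbit map
\[
    \pi\colon\GLR{n}\to\calS_{k},\qquad \pi(A)=(A^{-1})^{T}D_{k}A^{-1},
\]
realizes $\calS_{k}$ as the smooth homogeneous space $\GLR{n}/\Ort(n-k,k)$; in particular $\pi$ is a smooth submersion, hence admits a smooth local section $s\colon\calU\to\GLR{n}$ defined on an open neighborhood $\calU$ of $M(\pbs)$. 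Shrinking $\Vman$ so that $M(\Vman)\subset\calU$, I would set $A(\px):=s(M(\px))^{-1}$.

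The main obstacle in the whole argument is precisely the parametric diagonalization in the last paragraph. A more elementary proof via a Gram-Schmidt procedure adapted to the indefinite form $M(\px)$ is available, but requires care at each step to avoid the possibility that some intermediate vector is $M(\px)$-isotropic (the Gram-Schmidt denominators could vanish, even though $M(\px)$ itself stays non-degenerate). The homogeneous-space point of view sidesteps this issue entirely by packaging it into the general principle that quotient maps for smooth Lie group actions with closed stabilizers admit smooth local sections.
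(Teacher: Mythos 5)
Your proof takes the same first step as the paper: pass to an arbitrary trivialization over a connected neighborhood of $\pbs$, extract from the fiberwise $2$-homogeneity a smooth symmetric-matrix-valued map $M\colon\Vman\to\MatR{n}$, note that non-degeneracy and constancy of the index are open conditions, and reduce the problem to producing a smooth family $A\colon\Vman\to\GLR{n}$ with $A(\px)^{T}M(\px)A(\px)=D_{k}$. The difference is in how this ``parametric Sylvester'' step is justified. The paper simply cites the proof of the Morse--Bott lemma (Banyaga--Hurtubise), describing it as a fiberwise reduction of a quadratic form to a sum of squares — essentially the Gram--Schmidt procedure you mention, with the isotropy issue handled inside that reference. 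You instead invoke the transitive smooth action of $\GLR{n}$ by congruence on the open cell $\calS_{k}$ of symmetric matrices of fixed signature: the stabilizer of $D_{k}$ is the closed subgroup $\Ort(n-k,k)$, so the orbit map $\pi$ is a locally trivial fiber bundle and admits a smooth local section $s$. This is a clean, self-contained way to avoid the case analysis of indefinite Gram--Schmidt, trading an elementary-but-fiddly argument for a conceptual one that uses the Lie-group quotient machinery; both buy the same conclusion. One small slip to fix: from $\pi(s(M))=M$, i.e.\ $\bigl(s(M)^{-1}\bigr)^{T}D_{k}\,s(M)^{-1}=M$, one gets $s(M)^{T}M\,s(M)=D_{k}$, so the correct choice is $A(\px):=s(M(\px))$ rather than $s(M(\px))^{-1}$; with your choice the final display $\pv^{T}A^{T}MA\,\pv=\pv^{T}D_{k}\pv$ does not hold. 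This is a bookkeeping inversion only and does not affect the soundness of the approach.
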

\begin{proof}
Let $\phi'\colon\Vman'\times\bR^{n} \to \Eman$ be any local trivialization of the vector bundle $\vbp$ over some open \term{connected} neighborhood $\Vman'$ of $\pbs$ in $\fSing$.
Thus $\phi$ is an open $\Cinfty$ embedding such that for every $\px\in\Vman'$ we have that $\phi(\px\times\bR^n)=\Eman_{\px}$ and the induced map $\phi_{\px}:\bR^n\to\Eman_{\px}$, $\pv\mapsto\phi(\px,\pv)$, is a linear isomorphism.

Since $\gfunc$ is $\Cinfty$ and fiberwise homogeneous, it is well known, see e.g.\ discussion in~\cite{Dubovski:MOF:2017} and also~\cite{Maksymenko:Lens2:2022}, that $\gfunc\circ\phi:\Vman'\times\bR^{n} \to \bR$ is given by $\gfunc\circ\phi'(\px,\pvi{1},\ldots,\pvi{n}) = \sum\limits_{i,j=1}^{n} a_{ij}(\px) \pvi{i}\pvi{j}$ for some $\Cinfty$ functions $a_{ij}:\Vman'\to\bR$ such that $a_{ij}=a_{ji}$, i.e.\ it is fiberwise polynomial.
Since as each map $\phi_{\px}$, $\px\in\Vman'$, is an isomorphism and $\gfunc|_{\Eman_{\px}}$ is non-degenerate quadratic form, the corresponding quadratic forms $\restr{\gfunc\circ\phi}{\px\times\bR^n}$ are non-degenerate as well.
Moreover, since $\Vman'$ is connected, all those quadratic forms have the same index, say $k$.

It further follows from the proof of Morse-Bott lemma, e.g.~\cite{BanyagaHurtubise:EM:2004} (being just a fiberwise variant of the reduction of a quadratic form to the sum of squares), that one can find a smaller neighborhood $\Vman \subset \Vman'$ of $\pbs$ in $\fSing$ and a trivial \term{vector bundle isomorphism} $\psi\colon\Vman\times\bR^n \to \Vman\times\bR^{n}$, such that $\psi(\px,0)=(\px,0)$ and $\gfunc\circ\phi'\circ\psi(\px,\pvi{1},\ldots,\pvi{n}) = - \smashoperator{\sum\limits_{i=1}^{k}}\pvi{i}^2 + \smashoperator{\sum\limits_{i=k+1}^{n}}\pvi{i}^2$ for all $\px\in\Vman$.
Note that the principal point here is that $\psi\colon\{\px\}\times\bR^{n}\to\{\px\}\times\bR^{n}$ is a \term{linear isomorphism} for each $\px\in\Vman$, i.e.\ we can reduce $\gfunc\circ\phi'$ to the sum of squares mutually at all points $\px\in\Vman$ by fiberwise linear change of coordinates.
It remains to put $\phi = \phi'\circ\psi$.
\end{proof}

\begin{lemma}\label{lm:mbfol:Gy_is_closed}
For each $\pbs\in\Vman$ the group $\Gy{\pbs}$ is closed in $\Aut(\Eman_{\pbs})$ and has dimension $\tfrac{n(n-1)}{2}$, where $n=\dim\Eman_{\pbs}$.
Moreover, if $n=1$, i.e.\ $\dim\Gy{\pbs} = 0$, then $\Gy{\pbs}$ is a subgroup of $\{\pm\id_{\Eman_{\pbs}}\}$.
\end{lemma}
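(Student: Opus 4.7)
The strategy is to sandwich $\Gy{\pbs}$ between the identity component and the full group $\tilde{G}_\pbs := \{\Alin\in\Aut(\Eman_\pbs) : \gfunc\circ\Alin = \gfunc\}$ of linear automorphisms of $\Eman_{\pbs}$ preserving $\restr{\gfunc}{\Eman_\pbs}$. By Lemma~\ref{lm:local_triv_2form} the latter group is isomorphic to $\Ort(k,n-k)$, where $k$ is the index of $\restr{\gfunc}{\Eman_\pbs}$, so it is closed in $\Aut(\Eman_\pbs)$, of dimension $\tfrac{n(n-1)}{2}$, and has only finitely many connected components. The upper bound $\Gy{\pbs}\subseteq\tilde{G}_\pbs$ follows from Lemma~\ref{lm:mbfol_prop}\ref{enum:lm:mbfol_prop:Th_pres_F}: for every $\dif\in\DiffFix{\Foliation}{\fSing}$ the equality $\gfunc(\tfib{\dif}(\px)) = \gfunc(\px)$ holds on a neighborhood of $0$ in $\Eman_\pbs$; substituting $\tau\px$ for $\px$ and dividing by $\tau^2$ — valid by linearity of $\restr{(\tfib{\dif})}{\Eman_\pbs}$ and $2$-homogeneity of $\restr{\gfunc}{\Eman_\pbs}$ — propagates the identity to the entire fiber.

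For the lower bound I would realize a neighborhood of $\id_{\Eman_\pbs}$ in $\tilde{G}_\pbs$ by linearizations of explicitly constructed flows. Using Lemma~\ref{lm:local_triv_2form}, fix a local trivialization $\phi\colon \Vman \times \bR^n \to \Eman$ around $\pbs$ in which $\gfunc$ becomes the canonical form~\eqref{equ:quadr_form_ind_k}, and choose bump functions $\chi\colon \fSing \to [0,1]$ supported in $\Vman$ with $\chi(\pbs)=1$, and $\eta\colon\bR^n\to[0,1]$ of compact support with $\eta\equiv 1$ near $0$. For any $X$ in the Lie algebra $\mathfrak{g}_\pbs := T_{\id}\tilde{G}_\pbs$, define the vector field $V$ on $\Eman$ by
\[
V(\phi(\px,\pv)) = d\phi_{(\px,\pv)}\bigl(0,\; \chi(\px)\eta(\pv)X\pv\bigr),
\]
extended by zero outside $\phi(\Vman\times\bR^n)$. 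Since $X\pv$ is tangent to each $\gfunc$-level set inside the fiber (as $X\in\mathfrak{g}_\pbs$) and $\gfunc$ is $\px$-independent in this chart, $V(\gfunc)\equiv 0$, so $V$ is tangent to $\Foliation$ on $\Uman$, vanishes on $\fSing$, and has compact support in $\Eman$. The flow $\{\dif_t\}$ of $V$, extended by the identity to $\Mman$, therefore sits in $\DiffFix{\Foliation}{\fSing}$, and a direct Jacobian computation at $\pbs$ (where $\chi(\pbs)=\eta(0)=1$) yields $\restr{(\tfib{\dif_t})}{\Eman_\pbs}=\exp(tX)$.

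Since $\exp\colon\mathfrak{g}_\pbs\to\tilde{G}_\pbs$ is a local diffeomorphism at $0$, this construction fills out a neighborhood of $\id_{\Eman_\pbs}$ in $\tilde{G}_\pbs$, forcing $\Gy{\pbs}$ to be an open, hence clopen, subgroup of $\tilde{G}_\pbs$ equal to a union of its connected components. Because $\tilde{G}_\pbs\cong\Ort(k,n-k)$ has only finitely many components, $\Gy{\pbs}$ is closed in $\Aut(\Eman_\pbs)$ and of dimension $\tfrac{n(n-1)}{2}$. For $n=1$ one has $\tilde{G}_\pbs=\Ort(1)=\{\pm\id_{\Eman_\pbs}\}$, giving the final claim. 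I expect the main technical point to be verifying that the flow $\dif_t$ genuinely preserves individual leaves of $\Foliation$ rather than only $\gfunc$-level sets: this reduces to identifying the leaves of $\Foliation$ in $\Uman\setminus\fSing$ with connected components of $\gfunc^{-1}(c)$, a consequence of $\restr{\Foliation}{\Mman\setminus\fSing}$ being a $\Cinfty$ foliation of codimension $1$ (Definition~\ref{def:mf-fol-homog}) together with $\Uman$ being $\Foliation$-isolating (Lemma~\ref{lm:example_Fgood}), combined with continuity of the trajectories $t\mapsto\dif_t(\px)$ starting at the identity.
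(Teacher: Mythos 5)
Your proof is correct and follows essentially the same route as the paper. Both arguments establish the upper bound $\Gy{\pbs}\subset\Ort(k,n-k)$ from $\gfunc$-preservation (via Lemma~\ref{lm:mbfol_prop}) and the lower bound by filling out a neighborhood of the identity in $\Ort(k,n-k)$ with fiberwise linearizations of compactly supported $\gfunc$-preserving diffeomorphisms built from bump functions in a Morse--Bott chart (Lemma~\ref{lm:local_triv_2form}); the only cosmetic difference is that you realize these diffeomorphisms as time-$t$ flows of compactly supported vector fields tangent to the $\gfunc$-levels, whereas the paper writes them directly via $e^{\mu(\px)\lambda(\pv)\Blin}$ and must separately shrink to a neighborhood $\WW'$ to ensure the resulting maps are diffeomorphisms — a step your flow formulation gets for free.
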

\begin{proof}
Let $\Alin\in\Gy{\pbs}$ be any element, so there exists $\dif\in\Diff(\Foliation)$ such that $\dif(\pbs)=\pbs$ and $\Alin=\restr{(\tfib{\dif})}{\Eman_{\pbs}}$.
Then by~\eqref{equ:g_Th__g}, $\Alin$ preserves $\restr{\gfunc}{\Eman_{\pbs}}$.

Hence, if $\phi\colon\Vman\times\bR^n\to\Eman$ is a trivialization of $\vbp$ over $\Vman$ as in Lemma~\ref{lm:local_triv_2form}, then in this chart $\Gy{\pbs}$ can be identified with the subgroup of $\GLR{n}$ preserving the quadratic form~\eqref{equ:quadr_form_ind_k}, i.e.\ with the indefinite orthogonal group $\Ort(k,n-k)$.

It is well known that $\dim\Ort(k,n-k) = \tfrac{n(n-1)}{2}$, so it does not depend on $k$, and $\Ort(k,n-k)$ has finitely many path components (actually either $2$ or $4$), and its identity path component is denoted by $\SO^{+}(n,k-n)$.

Consider two cases.

1) Let $n=1$, so $\gfunc\circ\phi(\px,\pv) = \pm\pv^2$, and $\Ort(0,1)=\Ort(1,0) = \Ort(1) = \{ \pm\id_{\bR}\}$.
Hence, $\Gy{\pbs}$ is either $\{\id_{\bR}\}$ or $\Ort(1)$, and therefore it is closed in $\GLR{1}$.

2) Suppose $n\geqslant2$, so $\dim\Ort(k,n-k)=\tfrac{n(n-1)}{2} \geqslant1$.
\term{We claim that $\Gy{\pbs}$ contains a neighborhood $\UU$ of the unit $(n\times n)$-matrix $I$ in $\Ort(k,n-k)$}.
Since any such neighborhood generates the identity path component of $\SO^{+}(n,k-n)$ this will imply that
\[ \SO^{+}(n,k-n) \ \subset \ \Gy{\pbs} \ \subset \ \Ort(k,n-k). \]
Whence, $\Gy{\pbs}$ will be a union of (finitely many) connected components of $\Ort(k,n-k)$, and therefore it will be closed in $\GLR{n}$.

\newcommand\frso{\frak{so}(k,n-k)}
Indeed, let $\frso \subset \MatR{n}$ be the tangent space to $\SO^{+}(k,n-k)$ at the unit matrix $I$, i.e.\ the Lie algebra of $\SO^{+}(k,n-k)$.
Then we have the usual exponential map
\[
    \exp\colon\frso\to\SO^{+}(k,n-k),
    \qquad
    \exp(\Alin) = e^{\Alin} = \sum\limits_{i=0}^{\infty} \frac{\Alin^i}{i!},
\]
which yields a diffeomorphism of some open neighborhood $\WW$ of $0\in\frso$ onto some open neighborhood $\VV$ of $I$ in $\SO^{+}(k,n-k)$.

For $\eps>0$ let $\Dr{\eps} \subset \bR^{n}$ be a closed disk of radius $\eps$.
Fix two $\Cinfty$ functions $\mu:\Vman \to[0;1]$ and $\lambda:\bR^{n}\to[0;1]$ such that
\begin{enumerate}[label={$\bullet$}]
\item
$\supp(\mu)$ is compact and $\mu(\pbs)=1$;
\item
$\lambda=1$ on $\Dr{\eps}$ and $\lambda=0$ on $\bR^{n}\setminus\Dr{2\eps}$ for some $\eps>0$ such that $\phi\bigl(\Vman\times\Dr{3\eps}\bigr) \subset \Uman$.
\end{enumerate}
Define the following $\Cinfty$ map $G\colon\WW \times\Vman\times\bR^{n} \to \Vman\times\bR^{n}$ by
\begin{equation}\label{equ:map_GB}
    G(\Blin,\px,\pv) =
    \bigl(
        \px,
        e^{\mu(\px)\lambda(\pv) \Blin}\cdot\pv
    \bigr).
\end{equation}
For each $\Blin\in\WW$ it will be convenient to consider the map $G_{\Blin}\colon \Vman\times\bR^{n} \to \Vman\times\bR^{n}$ by $G_{\Blin}(\px,\pv) = G(\Blin,\px,\pv)$.
Then the following statements hold.
\begin{enumerate}[label={\rm(G\arabic*)}, topsep=1ex, itemsep=1ex, leftmargin=*]
\item\label{enum:GB:support}
$\supp(G_{\Blin}) \subset \supp(\mu)\times \Dr{2\eps}$, i.e.\ $G_{\Blin}$ is fixed out of $\supp(\mu)\times \Dr{2\eps}$.

\item\label{enum:GB:G0}
$G_{0} = \id_{\Vman\times\bR^{n}}$.

\item\label{enum:GB:TGB}
Let $\px\in\Vman$.
Then for each $\pv\in\bR^{n}$ with $\nrm{\pv}<\eps$ we have that $\lambda(\pv)=1$ and therefore
\[
    G_{\Blin}(\px,\pv) = \bigl(\px, e^{\mu(\px)\Blin}\cdot\pv\bigr).
\]
Hence, $\restr{(\tfib{G_{\Blin}})}{\px\times\bR^{n}} = e^{\mu(\px)\Blin}:\bR^{n}\to\bR^{n}$ belongs to $\SO^{+}(k,n-k)$.
In particular, since $\mu(\pbs)=1$ we have that
\begin{equation}\label{equ:TGB_y=eB}
    \restr{(\tfib{G_{\Blin}})}{\pbs\times\bR^{n}} = e^{\Blin}:\bR^{n}\to\bR^{n}
\end{equation}

\item\label{enum:GB:gGB_g}
$\gfunc\circ\phi\circ G_{\Blin}(\px,\pv) = \gfunc\circ\phi(\px,\pv)$ for all $(\px,\pv)\in\Vman\times\bR^{n}$.
This follows from formula~\eqref{equ:map_GB} for $G_{\Blin}$ and the observation that if $\Blin\in\frso$, then $\tau\Blin\in\frso$ for all $\tau\in[0;1]$ as well, and therefore $e^{\tau\Blin} \in \SO^{+}(k,n-k)$.
\end{enumerate}

Since $G$ is continuous, it follows from~\ref{enum:GB:support} and~\ref{enum:GB:G0} that there exists an open neighborhood $\WW'$ of $0\in\frso$ such that $G_{\Blin}$ is a diffeomorphism of $\Vman\times\bR^{n}$.
Moreover, then by~\ref{enum:GB:gGB_g} the map
\[
    \dif_{\Blin}\colon\Mman\to\Mman,
    \qquad
    \dif(\px) =
    \begin{cases}
        \phi\circ G_{\Blin}\circ \phi^{-1}(\px), & \px\in\phi(\Vman\times\bR^{n}), \\
        \px, & \text{otherwise},
    \end{cases}
\]
is a diffeomorphism supported in some neighborhood of $\pbs$ in $\Eman$ and satisfying $\gfunc\circ\dif=\gfunc$ on $\Eman$.
In particular, $\dif_{\Blin}\in\DiffFix{\Foliation}{\fSing}$.

Now let $\VV' = \exp(\WW')$ be the corresponding neighborhood of $I$ in $\SO^{+}(k,n-k)$.
Then by~\eqref{equ:TGB_y=eB}, $\VV' = \{ \restr{(\tfib{\dif_{\Blin}})}{\Eman_{\pbs}} \mid \Blin\in\WW' \} \subset \Gy{\pbs}$.
\end{proof}

\begin{lemma}
The quotient group $\GLESGG{\GFolDistr}/\zeroker{\GFolDistr}$ is finite.
\end{lemma}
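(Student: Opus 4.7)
The plan is to construct a group homomorphism $\Lambda\colon\GLESGG{\GFolDistr}\to\{\pm 1\}^k$ whose kernel coincides with $\zeroker{\GFolDistr}$; the finiteness of the quotient then follows immediately from the first isomorphism theorem.

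First I would identify the index set $\Yman_0=\{\pbs\in\fSing : \dim\Gy{\pbs}=0\}$ that appears in the definition of $\zeroker{\GFolDistr}$. Setting $n_{\pbs}:=\dim\Eman_{\pbs}$, Lemma~\ref{lm:mbfol:Gy_is_closed} gives $\dim\Gy{\pbs}=n_{\pbs}(n_{\pbs}-1)/2$, so $\dim\Gy{\pbs}=0$ iff $n_{\pbs}\leq 1$, and in either case $\Gy{\pbs}\subset\{\pm\id_{\Eman_{\pbs}}\}$ (trivially when $n_{\pbs}=0$). Since $n_{\pbs}$ is locally constant on $\fSing$, the set $\Yman_0$ is a union of connected components of $\fSing$, and compactness of $\fSing$ limits their number to finitely many. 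Let $k$ be the number of those components on which $n_{\pbs}=1$; components with $n_{\pbs}=0$ carry a trivial fiber and automatically lie in $\ker\tFibMap_{\pbs}$. For each rank-one component $\fSing_j\subset\Yman_0$ and an arbitrary $\Alin\in\GLESGG{\GFolDistr}$, in any local trivialization of the line bundle $\vbp|_{\fSing_j}$ the automorphism $\Alin$ becomes multiplication by a continuous $\{\pm 1\}$-valued function, which is therefore locally constant. Connectedness of $\fSing_j$ forces $\restr{\Alin}{\Eman_{\pbs}}$ to equal either $\id_{\Eman_{\pbs}}$ on all of $\fSing_j$ or $-\id_{\Eman_{\pbs}}$ on all of $\fSing_j$, producing a well-defined sign $\lambda_j(\Alin)\in\{\pm 1\}$.

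Assembling these signs yields the desired homomorphism $\Lambda\colon\GLESGG{\GFolDistr}\to\{\pm 1\}^k$, $\Alin\mapsto(\lambda_1(\Alin),\ldots,\lambda_k(\Alin))$. By construction $\Lambda(\Alin)=(1,\ldots,1)$ iff $\restr{\Alin}{\Eman_{\pbs}}=\id_{\Eman_{\pbs}}$ for every $\pbs\in\Yman_0$, which is precisely the defining condition of $\zeroker{\GFolDistr}$. Hence $\ker\Lambda=\zeroker{\GFolDistr}$, the quotient $\GLESGG{\GFolDistr}/\zeroker{\GFolDistr}$ embeds as a subgroup of $\{\pm 1\}^k$, and so is finite of order dividing $2^k$. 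There is no serious obstacle in this plan; the only mild point worth recording is that the operators $\pm\id_{\Eman_{\pbs}}$ are intrinsically defined on a one-dimensional vector space, so the sign $\lambda_j$ is insensitive to any non-orientability of the line bundle $\vbp|_{\fSing_j}$ and to the choice of local trivialization.
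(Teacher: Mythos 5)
Your proposal is correct and takes essentially the same approach as the paper: both identify $\Yman_0$ as a finite union of connected components of the compact manifold $\fSing$ (via local constancy of fiber dimension and Lemma~\ref{lm:mbfol:Gy_is_closed}), observe that on each such component the restriction $\restr{\Alin}{\Eman_{\pbs}}$ is forced by continuity to be a constant sign $\pm\id$, and package these signs into a homomorphism to a finite elementary abelian $2$-group whose kernel is exactly $\zeroker{\GFolDistr}$. Your only addition is the explicit handling of the degenerate $n_{\pbs}=0$ case, which the paper implicitly subsumes.
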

\begin{proof}
Let $\Yman_0 = \{ \pbs\in\fSing \mid \dim\Gy{\pbs} = 0\}$.
Recall that for every $\pbs\in\fSing$ we have the natural homomorphism $\tFibMap_{\pbs}:\GLESGG{\GDistr}\to\Gy{\pbs}$, $\tFibMap_{\pbs}(\Alin) = \Alin|_{\Eman_{\pbs}}$, and
\[
    \zeroker{\GDistr} :=
    \{ \Alin\in\GLESGG{\GDistr} \mid \restr{\Alin}{\Eman_{\pbs}} = \id_{\Eman_{\pbs}} \ \text{for all} \ \pbs\in\Yman_0 \}.
\]

By Lemma~\ref{lm:mbfol:Gy_is_closed} the dimension of the group $\Gy{\pbs}$ depends on the dimension of the fiber over $\pbs$.
Hence, if $\fSing'$ is a connected component of $\fSing$, then all fibers over points $\pbs\in\fSing'$ have the same dimension, and therefore $\dim\Gy{\pbs}$ is also the same for all $\pbs\in\fSing'$.
In particular, $\Yman_0$ is a union of connected components, say $\fSing_1,\ldots,\fSing_l$ of $\fSing$.

Moreover, let $\Eman_i$ be the connected component of $\Eman$ containing $\fSing_i$, so $\restr{\vbp}{\Eman_i}\colon\Eman_i\to\fSing_i$ is a regular neighborhood of $\fSing_i$.
Let also $\pbs\in\fSing_i$, so $\dim\Gy{\pbs} = 0$.
Then for each $\Alin\in\GLESGG{\GDistr}$ we have that $\restr{\Alin}{\Eman_{\pbs}} = \eps_{\pbs}\id_{\Eman_{\pbs}}$ for some $\eps_{\pbs}\in\{\pm1\}$.
It follows from continuity of $\Alin$ that $\eps_{\pbs}$ is constant on $\fSing_i$, and we will denote that common value of all $\eps_{\pbs}$ by $\eps_i(\Alin)$.
Thus, $\restr{\Alin}{\Eman_i} = \eps_i(\Alin)\id_{\Eman_i}$.

Let $\bZ_2 = \{\pm 1\}$ be the cyclic group of order $2$.
Then we get a natural homomorphism
\[
    \eta:\GLESGG{\GDistr} \to (\bZ^2)^{l},
    \qquad
    \eta(\Alin) =
    \bigl(
        \eps_1(\Alin), \ldots, \eps_l(\Alin),
    \bigr)
\]
and its kernel is evidently $\zeroker{\GFolDistr}$.
Hence, $\GLESGG{\GFolDistr}/\zeroker{\GFolDistr}$ is isomorphic with a subgroup of a finite group $(\bZ^2)^{l}$ and therefore is finite.
\end{proof}

\section{Several properties of scalable partitions}\label{sect:scalable_partitions}

\begin{lemma}\label{lm:Fgood_nbh_props}
Let $\Foliation$ be a partition of a manifold $\Mman$, $\fSing$ be a proper $\Foliation$-saturated submanifold of $\Mman$, and $\Uman\subset\Eman$ be an open neighborhood of $\fSing$.
\begin{enumerate}[wide]
\item\label{enum:lm:Fgood_nbh_props:subnbh}
If $\Uman$ is an $\Foliation$-isolating (resp.\ $\Foliation$-scalable), then so is any other open neighborhood $\Vman\subset\Uman$ of $\fSing$ contained in $\Uman$.

\item\label{enum:lm:Fgood_nbh_props:pres_on_V__pres_on_U}
Suppose $\Uman$ is an $\Foliation$-scalable neighborhood of $\fSing$.
Let also $\Alin \in \GLES$ be such that for every $\pbs\in\fSing$ there exists an open neighborhood $\Vman_{\pbs}$ in the fiber $\Eman_{\pbs}$ satisfying
\begin{equation}\label{equ:A_leaf_fiber_Vy}
\Alin(\omega \cap \Vman_{\pbs})  \subset \omega \cap \Uman
\end{equation}
for all leaves $\omega\in\Foliation$.
Then $\Alin$ preserves leaves of $\Foliation$ on $\Uman_{\Alin}:=\Alin^{-1}(\Uman)\cap\Uman$.

Note that we do not assume any kind of continuity of $\Vman_{\pbs}$ in $\pbs$, and that~\eqref{equ:A_leaf_fiber_Vy} holds if $\Alin$ preserves leaves of $\Foliation$ on some open neighborhood of $\fSing$ in $\Eman$.

\item\label{enum:lm:Fgood_nbh_props:tfib}
Suppose $\Uman$ is star-convex and $\Foliation$-scalable for $\fSing$.
Let also $\dif\in\DiffFix{\Foliation}{\fSing}$, $\Vman\subset\Uman$ be a neighborhood of $\fSing$ such that $\dif(\Vman)\subset\Eman$, and $\divhom\colon\Vman\times[0;1]\to\Eman$ be the $\Cinfty$ homotopy from Lemma~\ref{lm:Hadamard_for_vb}.
Then there exists another neighborhood $\Wman\subset\Vman$ of $\fSing$ such that for every $\tau\in(0;1]$ the map
\[
    \dif_{\tau}\colon\Wman\to\Eman,
    \qquad
    \dif_{\tau}(\px):=\divhom(\px,\tau) = \tfrac{1}{\tau}\dif(\tau\px),
\]
preserves leaves of $\Foliation$ in $\Wman$.

Moreover, if $\Uman$ is also $\Foliation$-isolating for $\fSing$, then $\Alin:=\tfib{\dif}=\divhom(\cdot,0)$ also preserves leaves of $\Foliation$ in $\Wman$, and therefore due to~\ref{enum:lm:Fgood_nbh_props:pres_on_V__pres_on_U}, in $\Uman_{\Alin}:=\Alin^{-1}(\Uman)\cap\Uman$.

\item\label{enum:lm:Fgood_nbh_props:clGLESGG}
Suppose $\Uman$ is \NGOOD{$\Foliation$}.
Let also $\clGBunchFol := \{ \overline{\Gy{\pbs}} \mid \pbs\in\fSing \}$ be the closed distribution consisting of closures of groups from $\GFolDistr$, see~\eqref{equ:GDistroFol}.
Then each $\Alin\in\GLESGG{\clGBunchFol}$ preserves leaves of $\Foliation$ in $\Uman_{\Alin}:=\Alin^{-1}(\Uman)\cap\Uman$, (though it is not claimed that there exists $\dif\in\DiffFix{\Foliation}{\fSing}$ such that $\Alin=\tfib{\dif}$).
\end{enumerate}
\end{lemma}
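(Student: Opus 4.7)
The plan is to treat the four parts in sequence, with the later parts building on the earlier ones. Part~\ref{enum:lm:Fgood_nbh_props:subnbh} is essentially tautological: both the $\Foliation$-isolating and $\Foliation$-scalable conditions are universally quantified over points of the neighborhood, so restricting to a smaller open neighborhood $\Vman \subset \Uman$ of $\fSing$ leaves only fewer tuples to verify.

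For part~\ref{enum:lm:Fgood_nbh_props:pres_on_V__pres_on_U}, I will fix $\px \in \Uman_{\Alin}$, set $\pbs = \vbp(\px)$, and choose $t \in (0,1]$ small enough that $t\px$ lies both in the fibre neighborhood $\Vman_{\pbs}$ and in $\Uman$ (possible since $\Uman$ is a neighborhood of $\pbs = 0 \in \Eman_{\pbs}$). If $\omega'$ denotes the leaf of $t\px$, the hypothesis yields $\Alin(t\px) = t\Alin(\px) \in \omega' \cap \Uman$, so $t\px$ and $t\Alin(\px)$ lie in a common leaf inside $\Uman$. Since $\px, \Alin(\px) \in \Uman$ by the definition of $\Uman_{\Alin}$, the $\Foliation$-scalability of $\Uman$ applied with the scaling factor $1/t$ transfers this conclusion upward, placing $\px$ and $\Alin(\px)$ in the same leaf, which is necessarily the leaf of $\px$.

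Part~\ref{enum:lm:Fgood_nbh_props:tfib} will use the $\Cinfty$ homotopy $\divhom$ from Lemma~\ref{lm:Hadamard_for_vb}, which satisfies $\divhom(\cdot, 0) = \tfib{\dif}$ and $\divhom(\px, \tau) = \tfrac{1}{\tau}\dif(\tau\px)$ for $\tau > 0$, and is fixed on $\fSing$. By compactness of $\fSing$ and continuity, I can choose $\Wman \subset \Vman$ with $\divhom(\Wman \times [0,1]) \subset \Uman$; star-convexity of $\Uman$ additionally delivers $\tau\Wman \subset \Uman$ for all $\tau \in [0,1]$. For $\tau \in (0,1]$ and $\px \in \Wman$, leaf preservation of $\dif$ places $\tau\px$ and $\dif(\tau\px) = \tau\,\dif_{\tau}(\px)$ in a common leaf inside $\Uman$, whereupon $\Foliation$-scalability with factor $1/\tau$ yields that $\px$ and $\dif_{\tau}(\px)$ lie in a common leaf, as required. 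For the boundary case $\tau = 0$, passing to the limit gives $\Alin(\px) = \lim_{\tau \to 0^+}\dif_{\tau}(\px) \in \overline{\omega} \cap \Uman$ where $\omega$ is the leaf of $\px$; the $\Foliation$-isolating hypothesis confines this intersection to $\omega \cup \fSing$, and since $\Alin$ restricts to a linear isomorphism on each fibre it maps $\Eman \setminus \fSing$ into itself, forcing $\Alin(\px) \in \omega$. Invoking part~\ref{enum:lm:Fgood_nbh_props:pres_on_V__pres_on_U} with $\Vman_{\pbs} := \Wman \cap \Eman_{\pbs}$ then extends leaf preservation from $\Wman$ to $\Uman_{\Alin}$.

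Part~\ref{enum:lm:Fgood_nbh_props:clGLESGG} is the main obstacle. After first shrinking $\Uman$ to a star-convex open neighborhood of $\fSing$ (still $\Foliation$-isolating and $\Foliation$-scalable by part~\ref{enum:lm:Fgood_nbh_props:subnbh}), I will fix $\Alin \in \GLESGG{\clGBunchFol}$, $\px \in \omega \cap \Uman_{\Alin}$ with $\pbs = \vbp(\px)$, and approximate $\Alin|_{\Eman_{\pbs}}$ by a sequence $\Blin_n \in \Gy{\pbs}$, writing $\Blin_n = \tfib{\dif_n}|_{\Eman_{\pbs}}$ for suitable $\dif_n \in \DiffFix{\Foliation}{\fSing}$. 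Part~\ref{enum:lm:Fgood_nbh_props:tfib} shows each $\tfib{\dif_n}$ preserves leaves on some neighborhood $\Wman_n$ of $\fSing$, \emph{but $\Wman_n$ generally shrinks with $n$}, so $\px$ need not lie in $\Wman_n$ for any particular $n$. Scalability rescues the argument: pick $t_n \in (0,1]$ with $t_n\px \in \Wman_n$; the equality $\tfib{\dif_n}(t_n\px) = t_n\Blin_n(\px)$ places $t_n\px$ and $t_n\Blin_n(\px)$ in a common leaf inside $\Uman$, and the scaling argument of part~\ref{enum:lm:Fgood_nbh_props:pres_on_V__pres_on_U} applied with factor $1/t_n$ yields $\Blin_n(\px) \in \omega$, provided $\Blin_n(\px) \in \Uman$, which holds for all large $n$ because $\Blin_n(\px) \to \Alin(\px) \in \Uman$ and $\Uman$ is open. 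Taking $n \to \infty$ then gives $\Alin(\px) \in \overline{\omega} \cap \Uman$, and the isolating property combined with the fibrewise linear isomorphism property of $\Alin$ closes out the proof exactly as in part~\ref{enum:lm:Fgood_nbh_props:tfib}.
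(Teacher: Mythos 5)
Your parts \ref{enum:lm:Fgood_nbh_props:subnbh}--\ref{enum:lm:Fgood_nbh_props:tfib} are correct and follow the paper's argument essentially verbatim (the ``four-point'' scalability bookkeeping, the homotopy $\divhom$ from Lemma~\ref{lm:Hadamard_for_vb}, the $\tau=0$ case handled by the isolating property and fibrewise linearity).

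For part~\ref{enum:lm:Fgood_nbh_props:clGLESGG} you take a genuinely different route. The paper first reduces, via part~\ref{enum:lm:Fgood_nbh_props:pres_on_V__pres_on_U}, to finding for each $\pbs$ one fibre neighborhood $\Wman$ on which $\Alin$ is leaf-preserving; it then uses the ``moreover'' clause of part~\ref{enum:lm:Fgood_nbh_props:tfib}, which gives preservation on all of $\Uman_{\Alin_i}$, together with the convergence $\Alin_i|_{\Eman_\pbs}\to\Alin|_{\Eman_\pbs}$ to produce a single neighborhood $\Wman\subset\Eman_\pbs$ with $\Wman\subset\Uman_{\Alin_i}$ for every $i$. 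The problem of the $\Wman_n$'s shrinking never arises in the paper because the relevant domains $\Uman_{\Alin_i}$ are controlled uniformly by the linear convergence. You instead work directly with the shrinking $\Wman_n$ from part~\ref{enum:lm:Fgood_nbh_props:tfib}, push an arbitrary $\px\in\Uman_\Alin$ into $\Wman_n$ by $t_n$-scaling, and then use $\Foliation$-scalability to pull the leaf-preservation conclusion back up to $\px$. This is correct and avoids appealing to the moreover clause altogether, at the modest cost of an extra scaling step per $n$.

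Two small points worth tightening. First, be explicit about where you use star-convexity of the (shrunk) neighborhood: part~\ref{enum:lm:Fgood_nbh_props:tfib} requires it, but the hypothesis of part~\ref{enum:lm:Fgood_nbh_props:clGLESGG} only says $\Foliation$-isolating and $\Foliation$-scalable, so a star-convex sub-neighborhood $\Uman'\subset\Uman$ is indeed needed. If you use $\Uman'$ only to invoke part~\ref{enum:lm:Fgood_nbh_props:tfib} (to produce the $\Wman_n\subset\Uman'$) and carry out the final scalability-and-isolation argument with the original $\Uman$ (whose scalability and isolation are the stated hypotheses, and which contains $\px$, $\Blin_n(\px)$, $t_n\px$, $t_n\Blin_n(\px)$), then you get the conclusion for $\Uman_\Alin$ directly and nothing more needs to be said. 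If instead you literally replace $\Uman$ by $\Uman'$ throughout, you only obtain leaf-preservation on $\Uman'_{\Alin}$, and you must finish with one more application of part~\ref{enum:lm:Fgood_nbh_props:pres_on_V__pres_on_U} (with $\Vman_\pbs:=\Uman'_{\Alin}\cap\Eman_\pbs$) to recover the full $\Uman_\Alin$. Second, you should record, as the paper does implicitly via part~\ref{enum:lm:Fgood_nbh_props:tfib}, that $t_n\Blin_n(\px)=\tfib{\dif_n}(t_n\px)\in\Uman$ because $\Wman_n$ was constructed so that $\divhom_{\dif_n}(\Wman_n\times[0;1])\subset\Uman$; this is the missing half of the ``inside $\Uman$'' claim before you invoke scalability.
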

\begin{proof}
Statement~\ref{enum:lm:Fgood_nbh_props:subnbh} directly follows from the definitions.

\ref{enum:lm:Fgood_nbh_props:pres_on_V__pres_on_U}
First let us note that $\Uman_{\Alin}$ is the maximal open subset of $\Uman$ such that $\Alin(\Uman_{\Alin})\subset\Uman$.

Now let $\px\in\Uman_{\Alin}$ be any point, so $\Alin(\px) \in \Uman$ as well.
We should show that $\px$ and $\Alin(\px)$ belong to the same leaf of $\Foliation$.

Indeed, since $\Vman_{\pbs}$ is a neighborhood of $\pbs$ in $\Eman_{\pbs}$, and $\Alin(\pbs)=\pbs$, there exists $\tau>0$ such that $\tau\px\in\Vman_{\pbs}$ and $\Alin(\tau\px) \in \Uman$.
Let $\omega$ be the leaf of $\Foliation$ containing $\tau\px$, so $\tau\px\in\omega\cap\Vman_{\pbs}$.
Then by~\eqref{equ:A_leaf_fiber_Vy}, $\tau \Alin(\px) = \Alin(\tau\px)\in\omega \cap \Uman$.
Thus, we get the following four points belonging to $\Uman$:
\[
    \tau\px,                             \qquad
    \tau \Alin(\px) = \Alin(\tau\px),            \qquad
    \px = \tfrac{1}{\tau} \tau\px,       \qquad
    \Alin(\px) = \tfrac{1}{\tau} \Alin(\tau\px).
\]
By the construction, the first two ones belong to the same leaf $\omega\in\Foliation$.
As $\Uman$ is $\Foliation$-scalable, $\px$ and $\Alin(\px)$ must also belong to the same leaf of $\Foliation$.

\ref{enum:lm:Fgood_nbh_props:tfib}
Since $\divhom(\fSing\times[0;1])=\fSing$, there exists an open star-convex neighborhood $\Wman\subset\Vman$ of $\fSing$ with compact closure such that $\divhom(\Wman\times[0;1])\subset\Uman$.
We claim that \term{$\dif_{\tau}$ preserves leaves of $\Foliation$ on $\Wman$ for all $\tau\in[0;1]$}, i.e.\ if $\px\in\Wman$ and $\omega$ is a leaf of $\Foliation$ containing $\px$, then $\dif_{\tau}(\px)\in\omega$ as well.

Indeed, first assume that $\tau\in(0;1]$.
Since $\Uman$ is star-convex, we have that $\tau\px\in\Uman$.
Further, as $\dif$ preserves the leaves of $\Foliation$, we also have that $\tau\px$ and $\dif(\tau\px)$ belong to the same leaf, say $\omega'$, of $\Foliation$.
Moreover, $\dif(\tau\px) = \tau\divhom(\px,\tau)\in \tau\Uman \subset\Uman$.
Finally, $\divhom(\px,\tau)\in\divhom(\Wman\times[0;1]) \subset \Uman$.
Thus, we get that the following four points belong to $\Uman$:
\[
    \tau\px,                       \qquad
    \dif(\tau\px),                 \qquad
    \px = \tfrac{1}{\tau} \tau\px, \qquad
    \dif_{\tau}(\px)=\tfrac{1}{\tau}\dif(\tau\px) = \divhom(\px,\tau).
\]
By the construction, the first two ones belong to the same leaf $\omega'\in\Foliation$.
As $\Uman$ is $\Foliation$-scalable, $\px$ and $\dif_{\tau}(\px)$ must also belong to the same leaf of $\Foliation$.
Hence, $\dif_{\tau}(\px)\in\omega$.

Consider the case $\tau=0$ and assume that $\Uman$ is $\Foliation$-isolating for $\fSing$.
By the construction $\divhom\bigl( \{\px\}\times (0;1] \bigr) \subset \omega$.
Suppose $\divhom(\px,0)\not\in\omega$.
Then $\divhom(\px,0) = \tfib{\dif}(\px) \in \Uman\cap (\overline{\omega}\setminus\omega) \subset \fSing$, since $\Uman$ is $\Foliation$-isolating for $\fSing$.
But the restriction of $\divhom(\cdot,0) = \tfib{\dif}$ to $\Eman_{\px}$ is a linear map, and $\px$ is a zero in $\Eman_{\px}$.
Hence, $\divhom(\px,0)=\px \in \omega$ as well, which contradict to the assumption.

\ref{enum:lm:Fgood_nbh_props:clGLESGG}
Suppose $\Uman$ is \NGOOD{$\Foliation$}\ and let $\Alin\in\GLESGG{\GFolDistr}$.
We should prove that $\Alin$ preserves orbits of $\Foliation$ on $\Uman_{\Alin}$.
By~\ref{enum:lm:Fgood_nbh_props:pres_on_V__pres_on_U} it suffices to show that for every $\pbs\in\fSing$ there exists an open neighborhood $\Vman_{\pbs}$ of $\pbs$ in the fiber $\Eman_{\pbs}$ such that $\Alin(\omega \cap \Vman_{\pbs})  \subset \omega \cap \Uman$.
The arguments are similar to~\ref{enum:lm:Fgood_nbh_props:tfib}.

Note that the assumption $\Alin\in\GLESGG{\GFolDistr}$ means that $\Alin|_{\Eman_{\pbs}} \in \overline{\Gy{\pbs}}$, so there exists a sequence $\{ \dif_i\}_{i\in\bN} \subset \DiffFix{\Foliation}{\fSing}$ such that the corresponding sequence $\{\Alin_i:=\tfib{\dif_i}\}_{i\in\bN} \subset \Gy{\pbs}$ converges to $\Alin$.
Then by~\ref{enum:lm:Fgood_nbh_props:tfib} each $\Alin_i$ preserves leaves of $\Foliation$ in $\Uman_{\Alin_i}:=\Alin_i^{-1}(\Uman) \cap \Uman$.

Since each $\Alin_i$ are linear automorphisms of a finite-dimensional vector space $\Eman_{\pbs}$, say of dimension $n$, that convergence is equivalent to the pointwise convergence of the values $\Alin_i(\pui{j}) \to \Alin(\pui{j})$ for any base $\pui{1},\ldots,\pui{n}$ of $\Eman_{\pbs}$.
This implies that one can find a small open neighborhood $\Wman$ of $\pbs$ in $\Eman_{\pbs}$ such that $\Alin(\Wman) \subset\Uman$ and $\Alin_i(\Wman) \subset\Uman$ for all $i\in\bN$.
In particular, $\Wman\subset\Uman_{\Alin_i}$ for all $i\in\bN$ and $\Wman\subset\Uman_{\Alin}$.

Let $\px\in\Wman\subset\Uman_{\Alin_i}$ and $\omega$ be the leaf of $\Foliation$ containing $\px$.
Then $\Alin_i(\px) \in \omega$ for all $i\in\bN$.
Now if $\Alin(\px)\not\in\omega$, then
\[
    \Alin(\px) \, \in \,
    \Uman\cap (\overline{\omega}\setminus\omega) \, \subset \,
    \fSing\cap\Eman_{\pbs} \, = \,
    \{\pbs\}.
\]
Since $\Alin$ is an isomorphism of $\Eman_{\pbs}$, we have that $\px=\pbs$ and this point is the zero of $\Eman_{\pbs}$.
Hence, $\Alin(\px) = \px\in\omega$ which contradicts to the assumption.

Thus, $\Alin$ preserves leaves of $\Foliation$ in $\Wman$ and therefore in $\Uman_{\Alin}$ due to~\ref{enum:lm:Fgood_nbh_props:pres_on_V__pres_on_U}.
\end{proof}

\section{Groups of vector bundle automorphisms}\label{sect:vb_automorphisms}

\subsection{Exponential map $\exp\colon\EndES\to\GLES$}
Let $\vbp\colon\Eman\to\fSing$ be a vector bundle over a manifold $\fSing$.
As mentioned above, $\GLES$ is locally contractible.
For the proof of Theorem~\ref{th:KhM:Fol} we need to define a specific contraction ``along fiberwise one-parameter subgroups'' of some neighborhood $\GLESNbhId$ of $\id_{\Eman}$ in $\GLES$, see Corollary~\ref{cor:sect_of_t} below.

First we will define a fiberwise analogue of the exponential map $\exp\colon\MatR{n}\to\GLR{n}$.
Such a map is usually studied for principal $G$-bundles, c.f.~\cite{Kolar:RMP:2010}.
Though this construction is rather natural, the author was not able to find it in the available literature for this specific case.
For the convenience of the reader we will provide its explicit description.

\newcommand\EXP[1]{\mathbf{e}^{#1}}

Let $\phi\colon\vbp^{-1}(\Uman) \to \Uman\times\bR^{n}$ be a local trivialization of $\vbp$ over some open subset $\Uman\subset\fSing$.
Then for each $\Blin\in\EndES$ we have that $\vbp\circ\Blin=\vbp$, so in particular, $\Blin(\vbp^{-1}(\Uman))=\vbp^{-1}(\Uman)$.
Therefore, we get the following commutative diagram from the left:
\begin{align*}
&\xymatrix@C=16ex{
    \vbp^{-1}(\Uman) \ar[r]^-{\Blin} \ar[d]_-{\phi} & \vbp^{-1}(\Uman)   \ar[d]^-{\phi} \\
    \Uman\times\bR^{n} \ar[r]^-{ (\pbs,\pu) \, \mapsto \, (\pbs,\,\Blin_{\phi}(\pbs)\pu)} & \Uman\times\bR^{n}
}
&&
\xymatrix@C=24ex{
    \vbp^{-1}(\Uman) \ar[r]^-{\phi^{-1}\, \circ \, \EXP{\Blin_{\phi}} \, \circ \, \phi} \ar[d]_-{\phi} & \vbp^{-1}(\Uman)   \ar[d]^-{\phi} \\
    \Uman\times\bR^{n} \ar[r]^-{\EXP{\Blin_{\phi}}\colon (\pbs,\pu) \, \mapsto \, \left(\pbs,\,e^{\Blin_{\phi}(\pbs)}\pu\right)} & \Uman\times\bR^{n}
}
\end{align*}
where $\Blin_{\phi}\colon\Uman\to\MatR{n}$ is some $\Cinfty$ map.
Define the following map
\[
    \EXP{\Blin_{\phi}}\colon\Uman\times\bR^{n} \to \Uman\times\bR^{n},
    \qquad
    (\pbs,\pu)\mapsto \bigl(\pbs, e^{\Blin_{\phi}(\pbs)}\pu\bigr),
\]
where $e^{X}$ is the usual exponent of the matrix $X$.
This gives the upper arrow of the above diagram from the right:
\[
    \phi^{-1}\circ \EXP{\Blin_{\phi}} \circ \phi\colon \vbp^{-1}(\Uman)  \to \vbp^{-1}(\Uman),
\]
and makes that diagram commutative.
\begin{sublemma}\label{lm:exp_well_defined}
Let $\psi\colon\Uman\times\bR^{n}\to\vbp^{-1}(\Uman)$ be another trivialization of $\vbp$ over $\Uman$.
Then the following maps coincide:
\begin{equation}\label{equ:phi_eB_phi}
    \phi^{-1}\circ \EXP{\Blin_{\phi}} \circ \phi = \psi^{-1}\circ \EXP{\Blin_{\psi}} \circ \psi\colon \vbp^{-1}(\Uman)  \to \vbp^{-1}(\Uman).
\end{equation}
\end{sublemma}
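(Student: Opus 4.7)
The plan is to reduce the identity~\eqref{equ:phi_eB_phi} to the classical identity $e^{gXg^{-1}}=g\,e^{X}\,g^{-1}$ for matrices, using the transition function between the two trivializations $\phi$ and $\psi$.

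First I would identify the transition function. Since both $\phi$ and $\psi$ trivialize $\vbp$ over $\Uman$, the composition $\phi\circ\psi^{-1}\colon \Uman\times\bR^{n}\to\Uman\times\bR^{n}$ is a vector bundle automorphism of the trivial bundle, and hence has the form $(\pbs,\pu)\mapsto(\pbs, g(\pbs)\pu)$ for some $\Cinfty$ map $g\colon\Uman\to\GLR{n}$. Consequently $(\psi\circ\phi^{-1})(\pbs,\pu)=(\pbs, g(\pbs)^{-1}\pu)$.

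Next I would compare the matrix representatives $\Blin_{\phi}$ and $\Blin_{\psi}$ of $\Blin$. From the defining commutative diagram one has $\phi\circ\Blin\circ\phi^{-1}=(\phi\circ\psi^{-1})\circ(\psi\circ\Blin\circ\psi^{-1})\circ(\psi\circ\phi^{-1})$, and a direct computation on $(\pbs,\pu)$ gives
\[
\Blin_{\phi}(\pbs)=g(\pbs)\,\Blin_{\psi}(\pbs)\,g(\pbs)^{-1}\qquad\text{for all }\pbs\in\Uman.
\]
The standard identity $e^{gXg^{-1}}=g\,e^{X}\,g^{-1}$ (which follows from $(gXg^{-1})^{k}=gX^{k}g^{-1}$ and the series definition of $e^{(\cdot)}$) then yields $e^{\Blin_{\phi}(\pbs)}=g(\pbs)\,e^{\Blin_{\psi}(\pbs)}\,g(\pbs)^{-1}$ pointwise on $\Uman$.

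Finally I would verify~\eqref{equ:phi_eB_phi} by a direct computation on an arbitrary point. Equivalently, it suffices to check that $\psi\circ\phi^{-1}\circ\EXP{\Blin_{\phi}}\circ\phi\circ\psi^{-1}=\EXP{\Blin_{\psi}}$ on $\Uman\times\bR^{n}$. Chasing a point $(\pbs,\pu)$ through this composition produces successively
\[
(\pbs,g(\pbs)\pu)\ \mapsto\ (\pbs,e^{\Blin_{\phi}(\pbs)}g(\pbs)\pu)\ \mapsto\ (\pbs,g(\pbs)^{-1}e^{\Blin_{\phi}(\pbs)}g(\pbs)\pu)=(\pbs,e^{\Blin_{\psi}(\pbs)}\pu),
\]
where the last equality uses the conjugation formula established above. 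This is precisely $\EXP{\Blin_{\psi}}(\pbs,\pu)$, completing the verification.

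I expect no serious obstacle: the statement is a pointwise matrix identity wrapped in trivialization bookkeeping, and the only substantive ingredient is the naturality of $\exp$ under conjugation, which is well known. The main care is simply in tracking the direction of the transition function so that the two representatives are related by conjugation rather than its inverse.
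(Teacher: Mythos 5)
Your proof is correct and follows essentially the same route as the paper: identify the transition function between the two trivializations as a fiberwise linear map, observe that the matrix representatives $\Blin_\phi$ and $\Blin_\psi$ of $\Blin$ are then related by conjugation, and invoke the identity $e^{gXg^{-1}}=g\,e^{X}g^{-1}$ pointwise. The paper's version is phrased via the transition map $\gamma=\psi\circ\phi^{-1}$ and reduces the claim to $\EXP{\Blin_\psi}=\gamma\circ\EXP{\Blin_\phi}\circ\gamma^{-1}$, which is exactly the relation you verify by chasing a point; the only difference is the (immaterial) choice of which direction of the transition function to name.
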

\begin{proof}
Let $\gamma = \psi\circ\phi^{-1}\colon \Uman\times\bR^{n} \to \Uman\times\bR^{n}$, $(\pbs,\pu) = \bigl(\pbs, \Clin(\pbs)\pu\bigr)$, be the transition map between those charts, where $\Clin\colon\Uman\to\GLR{n}$ is some $\Cinfty$ map.
Then~\eqref{equ:phi_eB_phi} is equivalent to the identity $\EXP{\Blin_{\psi}} = \gamma \circ \EXP{\Blin_{\phi}} \circ \gamma^{-1}$.

As $\psi^{-1}\circ \Blin \circ \psi = \gamma \circ (\phi^{-1}\circ \Blin \circ \phi) \circ \gamma^{-1}$, we have that $\Blin_{\psi}(\pbs) = \Clin(\pbs) \Blin_{\phi}(\pbs)\Clin^{-1}(\pbs)$, whence
\[
    e^{\Blin_{\psi}(\pbs)} =
    e^{\Clin(\pbs) \Blin_{\phi}(\pbs)\Clin^{-1}(\pbs)} =
    \Clin(\pbs) e^{\Blin_{\phi}(\pbs)}\Clin^{-1}(\pbs),
\]
which implies that $\EXP{\Blin_{\psi}} = \gamma \circ \EXP{\Blin_{\phi}} \circ \gamma^{-1}$.
\end{proof}

Hence, for each $\Blin\in\EndES$, one can define the map $\EXP{\Blin}\in\GLES$ by the following rule: for any local trivialization $\phi\colon\vbp^{-1}(\Uman) \to \Uman\times\bR^{n}$ of $\vbp$ over some open subset $\Uman\subset\fSing$,
\[
    \EXP{\Blin}|_{\vbp^{-1}(\Uman)}:=\phi\circ \EXP{\Blin_{\phi}} \circ \phi^{-1}.
\]
Due to Lemma~\ref{lm:exp_well_defined} this definition does not depend on a particular chart $\phi$.

\begin{sublemma}\label{lm:exp_props}
Suppose that $\fSing$ is compact.
Then the following statements hold.
\begin{enumerate}[leftmargin=*, label={\rm\alph*)}]
\item\label{enum:lm:exp_props:cont}
The map $\exp\colon \EndES \to \GLES$, $\exp(\Blin) = \EXP{\Blin}$, is continuous.
\item\label{enum:lm:exp_props:homeo}
There exists a star-convex neighborhood $\NbhEnd$ of the zero element $\vbp\colon\Eman\to\fSing$ of $\EndES$ and an open neighborhood $\NbhIdE$ of $\id_{\Eman}$ in $\GLES$, such that $\exp(\NbhEnd)=\NbhIdE$ and the restriction map $\restr{\exp}{\NbhEnd}\colon\NbhEnd\to\NbhIdE$ is a homeomorphism.
\item\label{enum:lm:exp_props:contr}
The map $\Ghmt\colon\NbhIdE \times[0;1]\to \NbhIdE$, $\Ghmt(\Alin,t) = \exp( t \exp^{-1}(\Alin))$, is a strong deformation retraction of $\NbhIdE$ into the point $\id_{\Eman}\in\GLES$.
Moreover, for every $\Alin\in\NbhIdE$ the map $\Ghmt_{\Alin}\colon\Eman\times[0;1]\to\Eman$, $\Ghmt_{\Alin}(\px,t) = \Ghmt(\Alin,t)(\px)$, is $\Cinfty$.
\item\label{enum:lm:exp_props:1-subgr}
Let $\pbs\in\fSing$, $\Gy{\pbs}$ be a \term{Lie} subgroup of the group $\Aut(\Eman_{\pbs})$ with $\dim\Gy{\pbs}\geqslant1$, and $\Blin\in\NbhEnd\subset\EndES$ be such that $\exp(\Blin)|_{\Eman_{\pbs}}  \in \Gy{\pbs}$.
Then $\exp(t\Blin)|_{\Eman_{\pbs}} \in \Gy{\pbs}$ for all $t\in[0;1]$ as well.
\end{enumerate}
\end{sublemma}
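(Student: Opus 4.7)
My plan is to deduce all four assertions from the corresponding statements for the classical matrix exponential, by trivializing $\vbp$ over a finite open cover of the compact base $\fSing$. For (a), in each trivialization $\phi_i\colon\vbp^{-1}(\Uman_i)\to\Uman_i\times\bR^{n_i}$ a morphism $\Blin\in\EndES$ is encoded by a smooth matrix-valued function $\Blin_{\phi_i}\colon\Uman_i\to\MatR{n_i}$, and $\exp(\Blin)$ is encoded by $\pbs\mapsto e^{\Blin_{\phi_i}(\pbs)}$. Since $X\mapsto e^X$ is entire, postcomposition with it is continuous on the space of smooth sections in the strong $\Cinfty$ Whitney topology, and chart-independence from Lemma~\ref{lm:exp_well_defined} lets these local continuities glue to give continuity of $\exp\colon\EndES\to\GLES$.

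For (b), I would take $\NbhEnd\subset\EndES$ to be the star-convex open set of those $\Blin$ with $\|\Blin_{\phi_i}(\pbs)\|<\varepsilon$ in every chart, for a fixed small $\varepsilon<\log 2$. On this set the principal logarithm $\log(I+Y)=\sum_{k\geq 1}(-1)^{k-1}Y^k/k$ converges fiberwise and provides a continuous, chart-independent inverse to $\exp$, so $\exp|_{\NbhEnd}$ is a homeomorphism onto the open set $\NbhIdE:=\exp(\NbhEnd)\subset\GLES$. For (c), the formula $\Ghmt(\Alin,t):=\exp(t\cdot\exp^{-1}(\Alin))$ is well-defined by star-convexity of $\NbhEnd$, continuous as a composition, satisfies $\Ghmt(\Alin,1)=\Alin$ and $\Ghmt(\Alin,0)=\id_{\Eman}$, and fulfils $\Ghmt(\id_{\Eman},t)=\id_{\Eman}$ for every $t$, since $\exp^{-1}(\id_{\Eman})=0$. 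Smoothness of $(\px,t)\mapsto\Ghmt_{\Alin}(\px,t)$ reduces in local coordinates to smoothness of $(\pbs,\pu,t)\mapsto(\pbs,e^{t\Blin_{\phi_i}(\pbs)}\pu)$, which is immediate from analyticity of the matrix exponential.

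The essential content is (d). The plan is to show that $X:=\Blin|_{\Eman_{\pbs}}$ lies in the Lie algebra $\mathfrak{g}_{\pbs}$ of $\Gy{\pbs}$; then $\exp(t\Blin)|_{\Eman_{\pbs}}=e^{tX}\in\Gy{\pbs}$ for all $t\in\bR$ by the one-parameter subgroup correspondence. The key input is the standard Lie-theoretic fact that, for any closed Lie subgroup $H$ of the Lie group $\Aut(\Eman_{\pbs})$, there exists an open neighborhood $V$ of $0$ in the ambient Lie algebra such that $\exp|_V$ is a diffeomorphism onto its image and $\exp(V)\cap H=\exp(V\cap\mathfrak{h})$. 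Combined with injectivity of $\exp|_{\NbhEnd}$ from (b), once the restriction $X$ lies in $V$ the hypothesis $e^X\in\Gy{\pbs}$ forces $X\in\mathfrak{g}_{\pbs}$.

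The hard part is reconciling the size of $V$, which a priori depends on $\Gy{\pbs}$, with the uniform choice of $\NbhEnd$ in (b). I anticipate either shrinking $\NbhEnd$ a posteriori once $\Gy{\pbs}$ is fixed --- which is harmless for the logic of the deformation constructed in Section~\ref{sect:proof:th:KhM:Fol} --- or establishing the inclusion $X\in\mathfrak{g}_{\pbs}$ uniformly via a BCH-type splitting $X=X_{\mathfrak{h}}+X_{\mathfrak{k}}$ along a linear complement to $\mathfrak{g}_{\pbs}$, factoring $e^X=e^{X_{\mathfrak{h}}}e^Z$ with $Z=O(\|X\|^2)$, and using the local submanifold structure of $H$ near the identity (where the hypothesis $\dim\Gy{\pbs}\geq 1$ enters to ensure a genuine transverse complement) to conclude $Z\in\mathfrak{h}\cap\mathfrak{k}=\{0\}$, hence $X\in\mathfrak{h}$.
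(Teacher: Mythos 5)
Your treatment of parts (a)--(c) is correct and essentially matches the paper's intent: the paper dispenses with them by saying the lemma ``easily follows from formulas for $\exp$ in local charts and the standard properties of the usual exponential map'' and leaves the proof to the reader. Where you go beyond the paper is part (d): you put your finger precisely on the point the paper glosses over, namely that the canonical neighborhood $V$ in the statement $\exp(V)\cap H=\exp(V\cap\mathfrak{h})$ for a closed Lie subgroup $H$ depends on $H$, whereas $\NbhEnd$ is fixed once and for all in (b). The paper's only remark about (d) is that $\{\exp(t\Blin)|_{\Eman_{\pbs}}\}_{t\in\bR}$ is a one-parameter subgroup of $\Gy{\pbs}$, which restates the conclusion rather than proving it.

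Unfortunately neither of your two repair strategies closes this gap, because (d) as stated is false. Take $n=4$, write $J=\amatr{0}{-1}{1}{0}$ and $R_\theta=e^{\theta J}$, and for a large integer $k$ set $H=\{\mathrm{diag}(R_\theta,R_{k\theta}) : \theta\in\bR\}\subset\SO(2)\times\SO(2)\subset\GLR{4}$. This is a compact, connected, closed $1$-dimensional Lie subgroup with Lie algebra $\mathfrak{h}=\{\mathrm{diag}(tJ,ktJ):t\in\bR\}$. The matrix $X=\mathrm{diag}\bigl(\tfrac{2\pi}{k}J,\,0\bigr)$ has Frobenius norm $\tfrac{2\pi\sqrt 2}{k}$, so it lies in any preassigned $\NbhEnd$ once $k$ is large enough, and $e^X=\mathrm{diag}(R_{2\pi/k},I)\in H$ (take $\theta=2\pi/k$). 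But $X\notin\mathfrak{h}$, and $e^{sX}=\mathrm{diag}(R_{2\pi s/k},I)$ lies in $H$ only when $s\in\bZ$, so $e^{sX}\notin H$ for $0<s<1$. Realized over a bundle with one fiber $\Eman_{\pbs}\cong\bR^4$, this falsifies (d) for every fixed $\NbhEnd$. Consequently your first option (shrinking $\NbhEnd$ once $\Gy{\pbs}$ is fixed) cannot be made uniform over $\pbs$, which is what Lemma~\ref{lm:char:0-uniform} requires, and your second, BCH-based option tacitly reinvokes the $H$-dependent canonical neighborhood in order to deduce $Z\in\mathfrak{h}$, hence is circular.

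What rescues the paper's downstream results is that in its actual applications the groups $\Gy{\pbs}$ are not arbitrary: by Lemma~\ref{lm:mbfol:Gy_is_closed} each $\Gy{\pbs}$ is a union of path components of a pseudo-orthogonal group $\Ort(k,n-k)$, cut out by the polynomial identity $A^T\eta A=\eta$ with $\eta=\mathrm{diag}(-I_k,I_{n-k})$. For $X$ in a ball whose radius depends only on $n$, one rewrites $e^{X^T}\eta e^X=\eta$ as $e^{X^T}=e^{-\eta X\eta}$ and uses injectivity of $\exp$ to obtain $X^T\eta+\eta X=0$, i.e.\ $X\in\mathfrak{so}(k,n-k)$. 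So (d) does hold, uniformly in $\pbs$, for the pseudo-orthogonal distributions arising from Morse--Bott foliations, and the contraction $\Ghmt$ built in Section~\ref{sect:proof:th:KhM:Fol} is sound in that setting. A corrected formulation of (d) should carry such an extra hypothesis on the $\Gy{\pbs}$; as written, quantified over all closed Lie subgroups of dimension at least one, it is too strong.
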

\begin{proof}
Lemma easily follows from formulas for $\exp$ in local charts and the standard properties of the usual exponential map $\exp\colon\MatR{n}\to\GLR{n}$.
We leave its proof for the reader.
Also note that the image $\bigl\{ \exp(t\Blin)|_{\Eman_{\pbs}} \bigr\}_{t\in\bR}$ in~\ref{enum:lm:exp_props:1-subgr} is a one-parameter subgroup of $\Gy{\pbs}$ containing $\Alin$.
\end{proof}

The following Corollary~\ref{cor:sect_of_t} is proved in~\cite{KhokhliukMaksymenko:Nbh:2022}, and is actually based on the method by E.~Lima~\cite{Lima:CMH:1964}, and uses a specific contraction (by convex linear combinations) of some neighborhood of $\id_{\Eman}$ in $\GLES$.
We will present another proof which uses the contraction $\Ghmt$ from Lemma~\ref{lm:exp_props}\ref{enum:lm:exp_props:contr} ``along fiberwise one-parametric subgroups'', and will be exploited in the proof of Theorem~\ref{th:KhM:Fol}.
\begin{subcorollary}[{\rm c.f.~\cite{KhokhliukMaksymenko:Nbh:2022}}]\label{cor:sect_of_t}
Let $\Mman$ be a manifold, $\fSing \subset\Mman$ be a connected compact proper submanifold, $\vbp\colon\Eman\to\fSing$ a regular neighborhood of $\fSing$ in $\Mman$.
Then for every closed subset $\Xman\subset\Mman$ with $\Xman\cap\fSing=\varnothing$ there exists an open neighborhood $\NbhIdE_{\Xman}$ of $\id_{\Eman}$ in $\GLES$ and a section $\sectrVB\colon\NbhIdE_{\Xman} \to \DiffFix[\vbp]{\Mman}{\Xman\cup\fSing}$ of the homomorphism $\tFibMap$, see~\eqref{equ:map_tanh}, i.e. $\tfib{\bigl(\sectrVB(\Alin)\bigr)} = \Alin$ for all $\Alin\in\NbhIdE_{\Xman}$.
\end{subcorollary}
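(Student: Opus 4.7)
\smallskip

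The plan is to build $\sectrVB(\Alin)$ as an \emph{interpolation along the fiberwise one--parameter subgroup} joining $\id_{\Eman}$ to $\Alin$, damped by a cutoff function that vanishes on $\Xman$. First I would invoke Lemma~\ref{lm:exp_props} to obtain the exponential $\exp\colon\EndES\to\GLES$ together with a star--convex neighborhood $\NbhEnd$ of $0\in\EndES$ mapped homeomorphically onto an open neighborhood $\NbhIdE$ of $\id_{\Eman}$ in $\GLES$, so that every $\Alin\in\NbhIdE$ has a unique $\Blin:=\exp^{-1}(\Alin)\in\NbhEnd$, and the assignment
\[
    \Ghmt\colon\NbhIdE\times[0;1]\times\Eman\to\Eman,
    \qquad
    \Ghmt(\Alin,t,\px):=\exp(t\Blin)(\px),
\]
is smooth jointly in $(t,\px)$ and continuous in $\Alin$ into $\GLES$.

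Next, using that $\fSing$ is compact and $\Xman$ is closed with $\Xman\cap\fSing=\varnothing$, I would pick a $\Cinfty$ cutoff $\lambda\colon\Mman\to[0;1]$ equal to $1$ on some open neighborhood $\Vman$ of $\fSing$ contained in $\Eman$, and whose support is a compact subset of $\Eman\setminus\Xman$. Then for $\Alin\in\NbhIdE$ I would define the smooth fiber--preserving map
\[
    \sectrVB(\Alin)(\px) \;:=\;
    \begin{cases}
        \exp\bigl(\lambda(\px)\Blin\bigr)(\px), & \px\in\supp(\lambda),\\
        \px, & \text{otherwise,}
    \end{cases}
\]
which is well defined and $\Cinfty$ on all of $\Mman$ because on $\Vman$ it equals $\Alin$ and outside $\supp(\lambda)$ it equals the identity. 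Once this is a diffeomorphism, the verification of the required properties is immediate: it fixes $\Xman$ (since $\lambda=0$ there) and $\fSing$ (each $\exp(t\Blin)$ fixes the zero section), it coincides with the bundle automorphism $\Alin$ on $\Vman$, so $\sectrVB(\Alin)\in\DiffFix[\vbp]{\Mman}{\Xman\cup\fSing}$ and $\tfib{\sectrVB(\Alin)}=\Alin$; continuity of the assignment $\Alin\mapsto\sectrVB(\Alin)$ in the strong Whitney topology follows from smoothness of $\Ghmt$ and continuity of $\exp^{-1}$ on $\NbhIdE$.

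The main obstacle, as usual, is verifying that $\sectrVB(\Alin)$ is a \emph{diffeomorphism} of $\Mman$ once $\Alin$ is sufficiently close to $\id_{\Eman}$, which is where the neighborhood $\NbhIdE_{\Xman}\subset\NbhIdE$ has to be shrunk. On a fixed fiber $\Eman_{\pbs}$ the formula $\px\mapsto e^{\lambda(\px)\Blin_{\pbs}}\px$ is nonlinear in $\px$ because $\lambda$ varies on the fiber, so injectivity and regularity are not automatic. I would handle this by a standard compactness--with--$C^1$--smallness argument: as $\Alin\to\id_{\Eman}$ in $\GLES$ we have $\Blin\to0$ in $\EndES$, hence both $\Blin$ and its fiberwise derivatives become uniformly small on the compact set $\supp(\lambda)$, and a direct computation shows the Jacobian of $\sectrVB(\Alin)$ tends uniformly to the identity. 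Therefore for $\Alin$ close enough to $\id_{\Eman}$, $\sectrVB(\Alin)$ is a local diffeomorphism and, being identity outside a compact set and $C^0$-close to $\id_{\Mman}$, globally a diffeomorphism; this picks out the required open neighborhood $\NbhIdE_{\Xman}\subset\NbhIdE$, completing the construction of the section.
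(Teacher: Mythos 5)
Your construction matches the paper's: both use the fiberwise exponential from Lemma~\ref{lm:exp_props} and a smooth cutoff supported away from $\Xman$ to interpolate from $\Alin$ near $\fSing$ to the identity outside a compact set in $\Eman\setminus\Xman$, and all the verification steps (fixing $\Xman\cup\fSing$, equality with $\Alin$ near $\fSing$, continuity of $\sectrVB$) are the same. The only cosmetic difference is the last step: where you argue via explicit $C^1$-smallness of the Jacobian to deduce $\sectrVB(\Alin)$ is a diffeomorphism, the paper invokes openness of $\Diff(\Mman)$ in $\Ci{\Mman}{\Mman}$ together with continuity of $\sectrVB$ and $\sectrVB(\id_\Eman)=\id_\Mman$ to directly set $\NbhIdE_{\Xman}:=\sectrVB^{-1}(\Diff(\Mman))$, which is a bit cleaner but amounts to the same thing.
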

\begin{proof}
Let $\Ghmt\colon\NbhIdE\times[0;1]\to\NbhIdE$ be a strong deformation retraction of some neighborhood $\NbhIdE$ of $\id_{\Eman}$ in $\GLES$ into the point $\id_{\Eman}$ constructed in Lemma~\ref{lm:exp_props}\ref{enum:lm:exp_props:contr}.
Fix any continuous norm $\nrm{\cdot}\colon\Eman\to[0;+\infty]$ on the fibers of $\Eman$ being $\Cinfty$ on $\Eman\setminus\fSing$ and for $s>0$ let
\[ \OBall{s} := \{ \px\in\Eman \subset\Mman \mid \nrm{\px} \leqslant s  \} \]
be the corresponding tubular neighborhood of $\fSing$ in $\Eman$.
Since $\fSing$ is compact and $\Xman\cap\fSing=\varnothing$, there exists $\eps>0$ such that $\OBall{\eps} \subset \Mman\setminus\Xman$.

Let also $\mu:[0;+\infty)\to[0;1]$ be any $\Cinfty$ function such that $\mu=1$ on $[0;\aConst]$ and $\mu=0$ on $[\bConst;+\infty)$.
Define the following map $\sectrVB\colon\NbhIdE\to\Ci{\Mman}{\Mman}$ by
\begin{equation}\label{equ:section_restrT}
    \sectrVB(\Alin)(\px) =
        \begin{cases}
            \px, & \px\in \Mman\setminus\OBall{\bConst\eps}, \\
            \Ghmt\bigl(\Alin, \mu(\nrm{\px}/\eps)\bigr)(\px) =
            \Ghmt_{\Alin}(\px, \mu(\nrm{\px}/\eps)), & \px\in\OBall{\eps}.
        \end{cases}
\end{equation}
If $\nrm{\px} \in [\bConst\eps;\eps]$, then $\mu(\nrm{\px}/\eps))=0$, whence for every $\Alin\in\NbhIdE$ we have that
\[
    \Ghmt\bigl(\Alin, \mu(\nrm{\px}/\eps)\bigr)(\px) =
    \Ghmt(\Alin, 0)(\px) = \px.
\]
Thus, both formulas in~\eqref{equ:section_restrT} coincide on $\OBall{\eps}\setminus\OBall{\bConst\eps}$, and therefore $\sectrVB(\Alin)$ is a $\Cinfty$ map, i.e.\ $\sectrVB$ is well-defined.
Furthermore,
\begin{enumerate}[label={\rm(\alph*)}]
\item\label{enum:sgl:sect}
for all $\Alin\in\NbhIdE$ and $\px\in\overline{\OBall{\aConst\eps}}$ we have that
\[
    \sectrVB(\Alin)(\px) =
    \Ghmt(\Alin, \mu(\nrm{\px}/\eps))(\px) =
    \Ghmt(\Alin, 1)(\px) = \Alin(\px);
\]
\item\label{enum:sgl:supp}
$\sectrVB(\Alin)(\px)=\px$ for all $\Alin\in\NbhIdE$ and $\px\in\Mman\setminus\OBall{\bConst\eps} \supset \Mman \setminus \Xman$;

\item\label{enum:sgl:id}
$\sectrVB(\id_{\Eman}) = \id_{\Mman}$.
\end{enumerate}

One easily checks that $\sectrVB$ is continuous from the weak $\Cinfty$ topology on $\NbhIdE$ to weak $\Cinfty$ topology on $\Ci{\Mman}{\Mman}$.
On the other hand, even if $\Mman$ is non-compact, the image of $\sectrVB$ consists of maps supported in the same compact neighborhood $\OBall{1}$ of $\fSing$.
Therefore, $\sectrVB$ is also continuous between the corresponding strong $\Cinfty$ topologies.

Since $\Diff(\Mman)$ is open in $\Ci{\Mman}{\Mman}$ and $\sectrVB(\id_{\Eman}) = \id_{\Mman} \in\Diff(\Mman)$, by~\ref{enum:sgl:id}, we see that the set $\NbhIdE_{\Xman} := \sectrVB^{-1}(\Diff(\Mman)) \subset \NbhIdE$ is an open neighborhood of $\id_{\Eman}$ in $\GLES$.
Then~\ref{enum:sgl:sect} and~\ref{enum:sgl:supp} imply that for each (vector bundle morphism) $\Alin\in\NbhIdE_{\Xman}$ its image $\sectrVB(\Alin)$ is a diffeomorphism of $\Mman$ fixed on $\Xman$ and coinciding with $\Alin$ near $\fSing$, so $\tFibMap\circ\sectrVB(\Alin) = \Alin$.
Hence, $\sectrVB(\NbhIdE_{\Xman}) \subset \DiffFix[\vbp]{\Mman}{\Xman\cup\fSing}$ and $\tFibMap\circ\sectrVB = \id_{\NbhIdE_{\Xman}}$, i.e.\ $\sectrVB$ is a section of $\tFibMap$ on $\NbhIdE_{\Xman}$.
\end{proof}

\subsection{Local contractibility of $\GLESGG{\GDistr}$}
Say that a topological space $\Xman$ is \term{locally contractible at a point $\px\in\Xman$}, if $\px$ has a local base of the topology of $\Xman$ consisting of (open) contractible neighborhoods.
Also, $\Xman$ is \term{locally contractible} if it is locally contractible at each $\px\in\Xman$.

\begin{sublemma}\label{lm:char:0-uniform}
Let $\GDistr$ be a Lie groups distribution over $\Yman\subset\fSing$.
Then the following conditions are equivalent:
\begin{enumerate}
\item\label{enum:lm:char:0-uniform:ker_open} $\zeroker{\GDistr}$ is open;
\item\label{enum:lm:char:0-uniform:contr_at_id} $\GLESGG{\GDistr}$ is locally contractible at $\id_{\Eman}$;
\item\label{enum:lm:char:0-uniform:loc_contractible} $\GLESGG{\GDistr}$ is locally contractible.
\end{enumerate}
\end{sublemma}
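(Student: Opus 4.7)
The plan is to establish the cycle $\ref{enum:lm:char:0-uniform:loc_contractible} \Rightarrow \ref{enum:lm:char:0-uniform:contr_at_id} \Rightarrow \ref{enum:lm:char:0-uniform:ker_open} \Rightarrow \ref{enum:lm:char:0-uniform:loc_contractible}$. The implication $\ref{enum:lm:char:0-uniform:loc_contractible} \Rightarrow \ref{enum:lm:char:0-uniform:contr_at_id}$ is immediate by definition, and $\ref{enum:lm:char:0-uniform:contr_at_id} \Rightarrow \ref{enum:lm:char:0-uniform:ker_open}$ follows from Lemma~\ref{lm:G_contractible}\ref{enum:lm:0-uniform:conn}, since a contractible neighborhood of $\id_{\Eman}$ is in particular connected. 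All of the substance lies in the remaining direction $\ref{enum:lm:char:0-uniform:ker_open} \Rightarrow \ref{enum:lm:char:0-uniform:loc_contractible}$.

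For that direction the idea is to contract neighborhoods of $\id_{\Eman}$ using the fiberwise exponential $\exp\colon\NbhEnd\to\NbhIdE$ of Lemma~\ref{lm:exp_props}\ref{enum:lm:exp_props:homeo} and the associated deformation $\Ghmt(\Alin,t)=\exp(t\exp^{-1}(\Alin))$ of Lemma~\ref{lm:exp_props}\ref{enum:lm:exp_props:contr}. Starting from openness of $\zeroker{\GDistr}$, Lemma~\ref{lm:G_contractible}\ref{enum:lm:0-uniform:subnbh} supplies an open neighborhood $\NbhGIdE$ of $\id_{\Eman}$ in $\GLESGG{\GDistr}$ with $\NbhGIdE\subset\zeroker{\GDistr}$, which I will further intersect with $\NbhIdE$ so that $\exp^{-1}$ is defined on it. To get a genuine local base at $\id_{\Eman}$, for any open $W\ni\id_{\Eman}$ in $\GLESGG{\GDistr}$ I will choose a star-convex neighborhood $\NbhEnd''\subset\NbhEnd$ of the zero of $\EndES$ small enough that $\exp(\NbhEnd'')\cap\NbhGIdE\subset W$; such $\NbhEnd''$ exists since $\exp$ restricts to a homeomorphism on $\NbhEnd$.

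The crux is then to show that $\Ghmt$ restricts to a strong deformation of $\NbhGIdE'':=\exp(\NbhEnd'')\cap\NbhGIdE$ onto $\{\id_{\Eman}\}$ inside $\NbhGIdE''$ itself. Star-convexity of $\NbhEnd''$ immediately keeps the trajectory $t\mapsto\Ghmt(\Alin,t)$ inside $\exp(\NbhEnd'')$, so the real check is $\Ghmt(\Alin,t)\in\GLESGG{\GDistr}\cap\zeroker{\GDistr}$ for every $\Alin\in\NbhGIdE''$ and $t\in[0;1]$. This splits fiberwise into two cases: when $\dim\Gy{\pbs}\geqslant 1$, the containment $\Ghmt(\Alin,t)|_{\Eman_{\pbs}}\in\Gy{\pbs}$ is exactly the conclusion of Lemma~\ref{lm:exp_props}\ref{enum:lm:exp_props:1-subgr} applied to $\Blin=\exp^{-1}(\Alin)$; when $\pbs\in\Yman_{0}$, the assumption $\Alin\in\zeroker{\GDistr}$ gives $\Alin|_{\Eman_{\pbs}}=\id_{\Eman_{\pbs}}$, so (provided $\NbhEnd$ was chosen from the start so that the fiberwise matrix exponential is injective) one forces $\exp^{-1}(\Alin)|_{\Eman_{\pbs}}=0$ and hence $\Ghmt(\Alin,t)|_{\Eman_{\pbs}}=\id_{\Eman_{\pbs}}$ for all $t$.

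The second case is the main obstacle: it is precisely where openness of $\zeroker{\GDistr}$ enters, and it relies on the initial construction of $\NbhEnd$ being refined so that $\exp^{-1}$ is well defined and sends identity fibers to zero fibers uniformly over $\fSing$; this uniformity should follow from compactness of $\fSing$ and the explicit fiberwise formula for $\exp$. Once this is in hand, the sets $\NbhGIdE''$ form a local base of contractible open neighborhoods of $\id_{\Eman}$ in $\GLESGG{\GDistr}$. Finally, local contractibility at an arbitrary $\Alin_0\in\GLESGG{\GDistr}$ follows by transporting this local base through left translation by $\Alin_0$, which is a self-homeomorphism of the topological group $\GLESGG{\GDistr}$ carrying $\id_{\Eman}$ to $\Alin_0$.
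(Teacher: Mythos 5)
Your proof is correct and follows essentially the same route as the paper's: the implication \ref{enum:lm:char:0-uniform:contr_at_id}$\Rightarrow$\ref{enum:lm:char:0-uniform:ker_open} via Lemma~\ref{lm:G_contractible}, the reduction \ref{enum:lm:char:0-uniform:contr_at_id}$\Leftrightarrow$\ref{enum:lm:char:0-uniform:loc_contractible} by translation homogeneity of the topological group, and the key step \ref{enum:lm:char:0-uniform:ker_open}$\Rightarrow$\ref{enum:lm:char:0-uniform:contr_at_id} built from the contraction $\Ghmt(\Alin,t)=\exp(t\exp^{-1}(\Alin))$ along fiberwise one-parameter subgroups, using Lemma~\ref{lm:exp_props}\ref{enum:lm:exp_props:1-subgr} on the fibers of positive dimension and openness of $\zeroker{\GDistr}$ on the zero-dimensional ones.

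One point worth noting: you explicitly flag that the case $\dim\Gy{\pbs}=0$ requires the fiberwise matrix exponential to be injective on $\NbhEnd$ (so that $\restr{\Alin}{\Eman_{\pbs}}=\id_{\Eman_{\pbs}}$ forces $\restr{\exp^{-1}(\Alin)}{\Eman_{\pbs}}=0$). The paper's proof uses this tacitly when it concludes ``$\exp(t\Blin)|_{\Eman_{\pbs}}=\id_{\Eman_{\pbs}}$ for all $t$'' from $\exp(\Blin)|_{\Eman_{\pbs}}=\id_{\Eman_{\pbs}}$; this is justified because the construction underlying Lemma~\ref{lm:exp_props}\ref{enum:lm:exp_props:homeo}, left as an exercise there, produces a neighborhood $\NbhEnd$ on which the endomorphisms have fiberwise operator norms bounded by a constant $<2\pi$ in a finite atlas (compactness of $\fSing$), so $e^{X}=I$ with $X$ in that range forces $X=0$. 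Making this explicit, as you do, is a genuine improvement in rigor, not a digression.
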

\begin{proof}
The implication~\ref{enum:lm:char:0-uniform:contr_at_id}$\Rightarrow$\ref{enum:lm:char:0-uniform:ker_open} follows from the implication
\ref{enum:lm:0-uniform:contr}$\Rightarrow$\ref{enum:lm:0-uniform:ker0} of Lemma~\ref{lm:G_contractible}.

The implication \ref{enum:lm:char:0-uniform:loc_contractible}$\Rightarrow$\ref{enum:lm:char:0-uniform:contr_at_id} is evident, while the inverse \ref{enum:lm:char:0-uniform:contr_at_id}$\Rightarrow$\ref{enum:lm:char:0-uniform:loc_contractible} follows from the fact that $\GLESGG{\GDistr}$ is a topological group, and therefore it is homogeneous, so it has ``the same topology at each point''.

\ref{enum:lm:char:0-uniform:ker_open}$\Rightarrow$\ref{enum:lm:char:0-uniform:contr_at_id}.
We have that each $\Gy{\pbs}$ is a Lie subgroup of $\Aut(\Eman_{\pbs})$ and $\zeroker{\GDistr}$ is open in $\GLESGG{\GDistr}$.
It is necessary to prove that every open neighborhood $\NbhGIdE$ of $\id_{\Eman}$ in $\GLESGG{\GDistr}$ contains some contractible neighborhood $\NbhGIdEp$.

Since $\zeroker{\GDistr}$ is open in $\GLESGG{\GDistr}$ one can assume that
\[
    \NbhGIdE = \UU \cap \GLESGG{\GDistr} \subset \zeroker{\GDistr}
\]
for some open subset $\UU \subset \GLES$.

By Lemma~\ref{lm:exp_props}\ref{enum:lm:exp_props:homeo}, the map $\exp:\EndES\to\GLES$ induces a homeomorphism of some neighborhood $\NbhEnd$ of $\vbp$ in $\EndES$ onto an open neighborhood $\NbhIdE$ of $\id_{\Eman}$ in $\GLES$.
Then one can choose an open star-convex neighborhood $\NbhEndp \subset \NbhEnd$ of $\vbp$ in $\EndES$ such that $\exp(\NbhEndp) \subset \NbhIdE \cap \UU$.
We claim that the following neighborhood $\NbhGIdEp$ of $\id_{\Eman}$ in $\GLESGG{\GDistr}$:
\[
    \NbhGIdEp := \exp(\NbhEndp) \cap \GLESGG{\GDistr} \subset
    \NbhIdE \cap \UU \cap \GLESGG{\GDistr} \subset \NbhIdE \cap \zeroker{\GDistr}
\]
is invariant under the contraction $\Ghmt$ from Lemma~\ref{lm:exp_props}\ref{enum:lm:exp_props:contr}, i.e.\ $\Ghmt\bigl(\NbhGIdEp\times[0;1]\bigr)\subset \NbhGIdEp$.
In other words, for every $\Blin\in\exp^{-1}(\NbhGIdEp)$, we have that $\exp(t\Blin)\in\GLESGG{\GDistr}$ for all $t\in[0;1]$.

Indeed, let $\pbs\in\fSing$, so $\restr{\exp(\Blin)}{\Eman_{\pbs}} \in \Gy{\pbs}$.
If $\dim\Gy{\pbs} \geqslant 1$, then $\exp(t\Blin)|_{\Eman_{\pbs}} \in \Gy{\pbs}$ for all $t\in[0;1]$ due to Lemma~\ref{lm:exp_props}\ref{enum:lm:exp_props:1-subgr}.

On the other hand, if $\dim\Gy{\pbs}=0$, then $\exp(\Blin)=\id_{\Eman_{\pbs}}$, since $\exp(\Blin)\in\NbhGIdEp\subset\zeroker{\GDistr}$.
Hence, $\exp(t\Blin)|_{\Eman_{\pbs}}=\id_{\Eman_{\pbs}} \in \Gy{\pbs}$ for all $t\in[0;1]$ as well.
\end{proof}

\subsection{Lie groups distribution with non-open $0$-kernel.}
Let $\fSing = [0;1]$, and $\vbp\colon\bR^2\times\fSing\to\fSing$, $\vbp(\pv,\pbs)=\pbs$, be the trivial vector bundle.
Then we have a natural identification of $\GL(\bR^2\times\fSing,\fSing)$, the space of vector bundle automorphisms of $\vbp$ fixed on $\fSing=0\times\fSing$, with the space of $\Cinfty$ maps $\Ci{\fSing}{\GLR{2}}$, so that the identity morphism $\id_{\Eman}$ corresponds to the constant function $\eps\colon\fSing\to\GLR{2}$ into the unit matrix $I=\amatr{1}{0}{0}{1}$.

Denote by
\[
    R_{\phi} = \amatr{\cos(2\pi\phi)}{\sin(2\pi\phi)}{-\sin(2\pi\phi)}{\cos(2\pi\phi)} \in \GLR{2},
    \quad \phi\in\bR,
\]
the matrix of rotation of the plane by the angle $2\pi\phi$.
Then for every $\Cinfty$ function $\mu\colon\fSing\to\bR$ one can define the following $\Cinfty$ vector bundle automorphism $\Alin_{\mu}\in\GL(\bR^2,\fSing)$ given by the formula $\Alin_{\mu}(\pv,\pbs) = (R_{\mu(\pbs)}, \pbs)$.

Let also $\Yman = \{0\} \cup \{1/n \mid n\in\bN \} \subset \fSing$,
\begin{align*}
    \Gy{0}   &= \GLR{2}, &
    \Gy{1/n} &=
        \left\{
            R_{k/n} \mid k=0,1,\ldots,n-1
        \right\} \cong \bZ_{n}, \ n\in\bN,
\end{align*}
be the finite cyclic group of order $n$ generated by rotation $R_{1/n}$ by the angle $2\pi/n$.
Then $\GDistr = \{ \Gy{\pbs} \mid \pbs\in\Yman \}$ is a Lie groups distribution over the set $\Yman$.

\begin{sublemma}
$\zeroker{\GDistr}$ is \term{not open} in $\GLESGG{\GDistr}$.
\end{sublemma}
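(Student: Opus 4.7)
The plan is to show that $\id_{\Eman}$ is not an interior point of $\zeroker{\GDistr}$ in the subspace topology, i.e.\ that every Whitney $\Cinfty$-neighborhood of $\id_{\Eman}$ in $\GLES$ meets $\GLESGG{\GDistr}\setminus\zeroker{\GDistr}$. Equivalently, I would exhibit, for each prescribed basic neighborhood $\mathcal{U}$ of $\id_{\Eman}$, an automorphism $\Alin\in\mathcal{U}\cap\GLESGG{\GDistr}$ with $\Alin(1/n_0)\neq I$ for at least one $n_0\in\bN$.

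The natural candidates are the rotation-valued automorphisms $\Alin_{\mu}$. First I would note that since the point $1/n_0\in(0;1]$ is isolated in $\{1/m:m\in\bN\}$ (as long as $n_0\geqslant 2$), one can choose a small open interval $J_{n_0}\ni 1/n_0$ disjoint from every other point of the sequence. Then I would take a smooth bump $\mu=\mu_{n_0}\colon[0;1]\to\bR$ supported in $J_{n_0}$ with $\mu(1/n_0)=1/n_0$. By construction, $R_{\mu(1/n_0)}=R_{1/n_0}$ is a generator of the cyclic group $\Gy{1/n_0}\cong\bZ_{n_0}$, while $R_{\mu(1/m)}=R_{0}=I\in\Gy{1/m}$ for every other $m\in\bN$; hence $\Alin_{\mu}$ lies in $\GLESGG{\GDistr}$. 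Since $R_{1/n_0}\neq I$ for $n_0\geqslant 2$, we have $\Alin_{\mu}\notin\zeroker{\GDistr}$.

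The remaining step is to verify that the bump $\mu_{n_0}$ can be placed inside any prescribed Whitney basic neighborhood $\mathcal{U}$ of $0$ in $\Ci{[0;1]}{\bR}$ by taking $n_0$ large. Since on the compact interval $\fSing=[0;1]$ a basic neighborhood only involves bounds on finitely many derivatives — say $\|\mu\|_{C^{K}}<\eps$ for some fixed $K\in\bN$ and $\eps>0$ — this reduces to controlling $\mu_{n_0},\mu_{n_0}',\ldots,\mu_{n_0}^{(K)}$ in the sup-norm. The height of the bump is forced to be exactly $1/n_0$ (a nonzero element of the cyclic lattice $(1/n_0)\bZ$ of smallest absolute value), so one must arrange the support to balance this against the required derivative bounds.

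The main obstacle is precisely this derivative control. A naive bump of height $1/n_0$ supported in an interval of width comparable to the gap $1/(n_0(n_0-1))\sim 1/n_0^{2}$ between consecutive points of $\{1/m\}$ has $k$-th derivative of size $\sim n_0^{2k-1}$, which grows in $n_0$ for $k\geqslant 1$. Overcoming this requires a more delicate construction: for instance, one can use a broader bump whose support encompasses several points $1/m$ and which is arranged to vanish at each of them while taking the value $1/n_0$ at $1/n_0$, so that its width is $O(1/n_0)$ rather than $O(1/n_0^{2})$ and the $k$-th derivative is bounded by $n_0^{k-1}$; iterating this idea, or constructing $\mu_{n_0}$ as a suitable rescaled convolution, should allow one to make the first $K$ derivatives of $\mu_{n_0}$ smaller than $\eps$ for all $n_0$ larger than some threshold depending only on $K$ and $\eps$. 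Producing such a family is the technical heart of the argument.
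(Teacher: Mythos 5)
Your worry about controlling the higher derivatives is not a technicality to be overcome; it is fatal, and in fact the statement is false for the distribution $\GDistr$ as written. A $C^1$-bound already rules out any nontrivial element. Identify $\Alin\in\GLESGG{\GDistr}$ with $\Phi\in\Ci{[0;1]}{\GLR{2}}$ as the paper does, and suppose $\|\Phi'\|_{C^0}<\delta$ while $\Alin\notin\zeroker{\GDistr}$. Let $n_*$ be the least $n$ with $\Phi(1/n)\neq I$; since $\Gy{1}=\{I\}$ forces $\Phi(1)=I$, we have $n_*\geqslant 2$, and by minimality $\Phi(1/(n_*-1))=I$. The nonidentity elements of $\Gy{1/n_*}\cong\bZ_{n_*}$ are rotations by $2\pi k/n_*$ with $1\leqslant k\leqslant n_*-1$, so in operator norm $\|\Phi(1/n_*)-I\|\geqslant 2\sin(\pi/n_*)\geqslant 4/n_*$; on the other hand, since $\Phi(1/(n_*-1))=I$, the mean value theorem over $[1/n_*,\,1/(n_*-1)]$, an interval of length $1/(n_*(n_*-1))$, gives $\|\Phi(1/n_*)-I\|\leqslant\delta/(n_*(n_*-1))$. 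Combining, $\delta\geqslant 4(n_*-1)\geqslant 4$. So every $\Alin\in\GLESGG{\GDistr}$ in the $C^1$-ball of radius less than $4$ about $\id_{\Eman}$ already lies in $\zeroker{\GDistr}$, i.e.\ $\zeroker{\GDistr}$ \emph{is} open.

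The paper's own proof has the same gap: replacing $\UIdProp$ by a $C^0$-ball $\{A:\|A(\pbs)-I\|<\delta\ \text{for all}\ \pbs\}$ is a strict \emph{enlargement} in the $C^\infty$-Whitney topology, not a decrease, so producing an element of that ball outside $\zeroker{\GDistr}$ says nothing about the original $C^\infty$-neighborhood $\UIdProp$. Indeed the paper's $\mu$ with $\mu(\pbs)=\pbs$ near $0$ yields $\Phi=R_{\mu(\cdot)}$ with $\|\Phi'\|_{C^0}=2\pi$, so $\Alin_\mu$ never enters a small $C^1$-ball about $\id_{\Eman}$. The intended phenomenon is recoverable by making the groups much finer, e.g.\ $\Gy{1/n}:=\{R_{k/n!}\mid k=0,\ldots,n!-1\}\cong\bZ_{n!}$: a bump $\mu$ of height $1/n_0!$ supported in a window of width $\sim 1/n_0^2$ about $1/n_0$ then satisfies $\|\mu^{(K)}\|_{C^0}\lesssim n_0^{2K}/n_0!\to 0$ for every fixed $K$, so the family you sketch would indeed enter arbitrarily small $C^\infty$-neighborhoods of $\id_{\Eman}$.
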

\begin{proof}
Indeed, let $\UIdProp$ be any neighborhood of the function $\eps$ in $\Ci{\fSing}{\GLR{2}}$ (corresponding to $\id_{\Eman}$).
By the definition of $\Cinfty$ topology, one can assume (decreasing $\UIdProp$ if necessary) that there exists  $\delta>0$ such that
\[
    \UIdProp = \{ A\colon\fSing\to\GLR{2} \mid \nrm{A(\pbs)-I} < \delta \ \text{for all} \ \pbs\in\fSing \}.
\]
Then one can find a $\Cinfty$ function $\mu\colon\fSing\to\bR$ such that
\begin{enumerate}[label={\rm(\alph*)},leftmargin=*]
\item $\Alin_{\mu}\in\UIdProp$, i.e.\ $\nrm{R_{\mu(\pbs)} - I} < \delta$ for all $\pbs\in\bR$;
\item there exists $n>0$ such that $\mu(\pbs)=\pbs$ for $\pbs\in[0;\tfrac{1}{n}]$ for some $n>0$, and $\mu(\pbs)=0$ for $\pbs\in[\tfrac{1}{n+1};1]$.
\end{enumerate}
Notice that
\begin{enumerate}[label={\rm(\roman*)},leftmargin=*]
\item
if $m<n$, i.e.\ $1/n < 1/m$, then $\mu(1/m) = 0$, whence $\restr{\Alin_{\mu}}{\bR^2\times\frac{1}{m}} = R_{0} = I \in \Gy{1/m}$;
\item
on the other hand, if $m\geqslant n$, then $\mu(1/m) = 1/m$, whence $\restr{\Alin_{\mu}}{\bR^2\times\frac{1}{m}} = R_{2\pi/m} \in \Gy{1/m}$.
\end{enumerate}
Thus, $\dif\in\UIdProp\cap\GLESGG{\GDistr}$, but $\restr{\Alin_{\mu}}{\bR^2\times\frac{1}{m}}\neq  \id_{(\bR^2\times\frac{1}{m})}$ when $m\geqslant n$, though $\dim\Gy{1/m}=0$.
This proves that $\zeroker{\GDistr}$ is not open.
\end{proof}

\section{Proof of Theorem~\ref{th:KhM:Fol}}\label{sect:proof:th:KhM:Fol}
Let $\Mman$ be a manifold, $\fSing \subset\Mman$ be a connected compact proper submanifold, $\vbp\colon\Eman\to\fSing$ a regular neighborhood of $\fSing$ in $\Mman$, $\Foliation$ a partition of $\Mman$ for which $\fSing$ is $\Foliation$-saturated.

\begin{enumerate}[wide, itemsep=1ex]
\item[\ref{enum:th:KhM:Fol:HE}]
Suppose $\fSing$ has an \NGOOD{$\Foliation$}\ open neighborhood $\Uman\subset\Eman$.
Then it is shown in~\cite{KhokhliukMaksymenko:Nbh:2022} that the deformation $\Hhmt$ of $\DiffInv{\Mman}{\fSing}$ into $\DiffInv[\vbp]{\Mman}{\fSing}$ constructed in Theorem~\ref{th:isot_nbh}\ref{th:isot_nbh:linearization}, see item~\ref{enum:rem:th:isot_nbh:proof_1} of the proof, also preserves the subgroup $\Diff(\Foliation)$ and therefore its intersections with groups~\eqref{equ:DMS_homot_equiv}.
This gives a deformation of the bottom triple from~\eqref{equ:DMS_homot_equiv_fol} into the corresponding top triple.

\item[\ref{enum:th:KhM:Fol:Fibr}]
Suppose further that for each $\pbs\in\fSing$ the group $\Gy{\pbs} = \{ \restr{(\tfib{\dif})}{\Eman_{\pbs}} \mid \dif\in\DiffFix{\Foliation}{\fSing} \}$ from~\eqref{equ:GDistroFol} is closed in $\Aut(\Eman_{\pbs})$ and for the distribution of groups $\GFolDistr=\{\Gy{\pbs}\}_{\pbs\in\fSing}$ its $0$-kernel $\zeroker{\GFolDistr}$ is open in $\GLESGG{\GFolDistr}$.

\term{We will find a neighborhood $\UU$ of $\id_{\Eman}$ in $\GLESGG{\GFolDistr}$ and a continuous map
\[
    \sectrVB\colon \UU \to \DiffFix[\vbp]{\Foliation}{\fSing} \subset \DiffFix{\Foliation}{\fSing}
\]
such that $\tFibMap\circ\sectrVB=\id_{\UU}$.}
This will give at once sections of the two upper arrows $\tFibMap$ from~\eqref{equ:diagam_homot_eq_all_maps:fol}.

Let $\NbhEnd$ be a star-convex neighborhood of $\vbp\colon\Eman\to\fSing$ (as the zero of the algebra $\EndES$) such that $\NbhIdE:=\exp(\NbhEnd)$ and the restriction $\exp\colon\NbhEnd \to \NbhIdE$ is a homeomorphism.
Let also $\Ghmt\colon\NbhIdE\times[0;1]\to\NbhIdE$, $\Ghmt(\Alin,t)=\exp(t\exp^{-1}(\Alin))$, be a contraction of some neighborhood $\NbhIdE$ of $\id_{\Eman}$ in $\GLES$ defined in Lemma~\ref{lm:exp_props}\ref{enum:lm:exp_props:contr}.

Since each $\Alin\in\GLESGG{\GFolDistr}$ is fixed on $\fSing$, there exists an open neighborhood $\Vman\subset\Uman$ of $\fSing$ with compact closure and a neighborhood $\NbhGIdE \subset \NbhIdE \cap \GLESGG{\GFolDistr}$ of $\id_{\Eman}$ in $\GLESGG{\GFolDistr}$ such that $\Alin(\overline{\Vman}) \subset \Uman$ for all $\Alin\in\NbhGIdE$.
As $\Uman$ is \NGOOD{$\Foliation$}\, we have by Lemma~\ref{lm:Fgood_nbh_props}\ref{enum:lm:Fgood_nbh_props:clGLESGG} that $\Alin$ preserves the leaves of $\GFolDistr$ on $\Alin^{-1}(\Uman) \cap \Uman$.

Also, since $\zeroker{\GFolDistr}$ is open in $\GLESGG{\GFolDistr}$, we have by Lemma~\ref{lm:char:0-uniform} that there exist a neighborhood $\NbhGIdEp\subset \NbhGIdE$ of $\id_{\Eman}$ in $\GLESGG{\GFolDistr}$ being invariant under $\Ghmt$, i.e.\ $\Ghmt(\NbhGIdEp\times[0;1])\subset\NbhGIdEp$.

Put $\Xman:=\Mman\setminus\Vman$.
Then by Corollary~\ref{cor:sect_of_t}, there is an open neighborhood $\NbhIdE_{\Xman} \subset \NbhIdE$ of $\id_{\Eman}$ in $\GLES$ and a continuous map $\sectrVB\colon \NbhIdE_{\Xman} \to \DiffFix[\vbp]{\Mman}{\Xman\cup\fSing}$ such that $\tfib{\bigl(\sectrVB(\Alin)\bigr)} = \Alin$ for all $\Alin\in\NbhIdE_{\Xman}$, i.e.\ $\tFibMap\circ\sectrVB=\id_{\NbhIdE_{\Xman}}$.

Then $\UU:= \NbhIdE_{\Xman} \cap \NbhGIdEp$ is also a neighborhood of $\id_{\Eman}$ in $\GLESGG{\GFolDistr}$.
We claim that
\[
    \sectrVB(\UU) \subset \Diff(\Foliation),
\]
which will imply that in fact
\[
    \sectrVB(\UU) \subset \Diff(\Foliation) \cap \DiffFix[\vbp]{\Mman}{\Xman\cup\fSing} =
    \DiffFix[\vbp]{\Foliation}{\Xman\cup\fSing}
    \subset
    \DiffFix[\vbp]{\Foliation}{\fSing}.
\]
In particular, the restriction $\restr{\sectrVB}{\UU}\colon \UU \to \DiffFix[\vbp]{\Foliation}{\Xman\cup\fSing}$ will be the  required local section of both $\tFibMap\colon\DiffFix[\vbp]{\Foliation}{\fSing} \to \GLESGG{\GFolDistr}$ and $\tFibMap\colon\DiffFix{\Foliation}{\fSing} \to \GLESGG{\GFolDistr}$.

Indeed, let $\Alin\in\UU$, $\px\in\Mman$, and $\omega$ be the leaf of $\Foliation$ containing $\px$.
We have to show that $\sectrVB(\Alin)(\px)\in\omega$.
If $\px\in\Mman\setminus\OBall{\bConst\eps}$, then $\sectrVB(\Alin)(\px)=\px\in\omega$.

Suppose $\px\in\OBall{\bConst\eps} \subset\Vman$ and let $\pbs = \vbp(\px) \in \Eman$.
By the construction we have that
\begin{itemize}
\item $\Ghmt(\Alin,t)\in \NbhGIdEp \subset \GLESGG{\GFolDistr}$ for all $t\in[0;1]$, i.e.\ $\Ghmt(\Alin,t)|_{\Eman_{\pbs}} \in \Gy{\pbs}$ for all $\pbs\in\fSing$;
\item
$\Ghmt(\Alin,t)\in \NbhGIdE$, and therefore it preserves the leaves of $\Foliation$ on $\Vman$;
\item
$\sectrVB(\Alin)(\px)=\Ghmt(\Alin, \tau)(\px)$, where $\tau=\mu(\nrm{\px}/\eps)$.
\end{itemize}
Thus, $\sectrVB(\Alin)(\px) = \gamma(\px)$, where $\gamma = \Ghmt(\Alin, \tau) \in \Gy{\pbs}$.
Hence, $\gamma(\px)\in\omega$.

\item[\ref{enum:th:KhM:Fol:Restr}]
Finally, suppose that the right vertical arrow $\tRestr\colon\DiffInv{\Foliation}{\fSing}\to\Diff(\fSing)$ in~\eqref{equ:diagam_homot_eq_all_maps:fol} has a local section $s\colon\UU\to\DiffInv{\Foliation}{\fSing}$ defined on some neighborhood $\VV$ of $\id_{\fSing}$ in $\Diff(\fSing)$.

By Theorem~\ref{th:isot_nbh} the deformation $\Hhmt$ of $\DiffInv{\Foliation}{\fSing}$ into $\DiffInv[\vbp]{\Foliation}{\fSing}$ commutes with $\tFibMap$ in the sense that $\tfib{\Hhmt(\dif,t)}=\tfib{\dif}$ for all $\dif\in\DiffInv{\Mman}{\fSing}$ and $t\in[0;1]$, see item~\ref{enum:H:prop:5} in Section~\ref{sect:rem:proof:th:isot_nbh}\ref{enum:rem:th:isot_nbh:proof_1}.
Moreover, $\Hhmt_0\bigl(\DiffInv{\Foliation}{\fSing}\bigr)\subset \DiffInv[\vbp]{\Foliation}{\fSing}$.
Then $\Hhmt_0\circ s\colon \UU \to \DiffInv[\vbp]{\Foliation}{\fSing}$ is a local section of the left arrow $\tRestr$.
\end{enumerate}

Theorem~\ref{th:KhM:Fol} is completed.

\section{Applications}\label{sect:applications}
\subsection{Scalable foliation near extreme submanifold}\label{sect:applications_claims}
As the first application of the ``linearization part'' of Theorem~\ref{th:KhM:Fol} we will prove a ``local'' statement about ``scalable'' foliations (given by fiberwise homogeneous functions) near extreme critical submanifolds.

Let $\vbp\colon\Eman\to\fSing$ be a vector bundle over a compact manifold $\fSing$, and $\func\colon\Eman\to[0;+\infty)$ be a continuous  non-negative function such that
\begin{enumerate}[label={(\alph*)}]
    \item\label{enum:hfunc:homog}  $\func$ is \term{$k$-homogeneous} of fibers for some (possibly fractional)$k>0$;
    \item\label{enum:hfunc:zero}   $\func^{-1}(0)=\fSing$;
    \item\label{enum:hfunc:smooth} the restriction $\restr{\func}{\Eman\setminus\fSing}\colon\Eman\setminus\fSing \to (0;+\infty)$ is $\Cinfty$;
\end{enumerate}
Denote by $\hFoliation$ the partition of $\Eman$ into path components of level sets $\func^{-1}(\rr)$, $\rr\in[0;\infty)$, of $\func$.
Let also $\GLV{\hFoliation}$ be the subgroup of $\GLE$ consisting of $\hFoliation$-leaf preserving vector bundle automorphisms of all $\Eman$, and $\GLVFix{\hFoliation}{\fSing} := \GLV{\hFoliation} \cap \GLES$ be the subgroup fixed on $\fSing$, i.e.\ preserving fibers of $\vbp$.

Consider the following $\hFoliation$-saturated subset $\ATor := \func^{-1}\bigl([0;1]\bigr)$ of $\Eman$.
It follows from the conditions on $\func$ that $\ATor$ is a compact submanifold with boundary $\dATor=\func^{-1}(1)$ being a $\Cinfty$ submanifold of $\Eman$.
Let $\Foliation := \restr{\hFoliation}{\ATor}$ be the restriction of $\hFoliation$ to $\ATor$, i.e.\ the partition of $\ATor$ into path components of level sets $\func^{-1}(\rr)$, $\rr\in[0;1]$, of $\func$.
As usual denote by $\Diff(\Foliation)$ the group of $\Foliation$-leaf preserving diffeomorphisms of $\ATor$, and by $\DiffFix{\Foliation}{\fSing}$ its subgroup fixed on $\fSing$.

\newcommand\tsect{s}

Recall also that we have a continuous homomorphism $\tFibMap\colon\Diff(\Foliation)\to\GLE$, $\tFibMap(\dif)=\tfib{\dif}$, with kernel $\DiffFix[1]{\Foliation}{\fSing}$.
\begin{sublemma}\label{lm:sect_of_tFib}
$\tFibMap\bigl(\Diff(\Foliation)\bigr) = \GLV{\hFoliation}$ and the induced epimorphism $\tFibMap\colon\Diff(\Foliation)\epiArrow\GLV{\hFoliation}$ admits a section homomorphism $\tsect\colon\GLV{\hFoliation}\to\Diff(\Foliation)$, $\tsect(\Alin)=\restr{\Alin}{\ATor}$.
Hence, by Lemma~\ref{lm:principal_fibrations}\ref{enum:lm:principal_fibrations:global_sect}, we have the following commutative diagram:
\[
\xymatrix@C=5em{
    \DiffFix[1]{\Foliation}{\fSing}
        \ar@{=}[d]
        \ar@{^(->}[r]^-{\dif \,\mapsto\, (\dif,\id_{\Eman})}
        &
    \DiffFix[1]{\Foliation}{\fSing} \times \GLV{\hFoliation}
        \ar@{->>}[r]^(0.61){(\dif,\Alin) \,\mapsto\, \Alin}
        \ar[d]^-{\cong}_-{\xi\colon (\dif,\Alin) \,\mapsto\, \dif\circ(\restr{\Alin}{\ATor})}
        &
    \GLV{\hFoliation} \ar[d] \\
    \DiffFix[1]{\Foliation}{\fSing}  \ar@{^(->}[r] &
    \Diff(\Foliation)
        \ar@{->>}[r]^-{\tFibMap}
        &
    \GLV{\hFoliation}
    }
\]
in which rows are short exact sequences and $\xi$ is a homeomorphism.
\end{sublemma}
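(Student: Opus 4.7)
The plan is to verify both inclusions of $\tFibMap(\Diff(\Foliation))=\GLV{\hFoliation}$ and then invoke Lemma~\ref{lm:principal_fibrations}\ref{enum:lm:principal_fibrations:global_sect} with $\tsect$ as a homomorphic global section; the equality $\ker(\tFibMap)=\DiffFix[1]{\Foliation}{\fSing}$ is immediate from the definitions.

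First I would dispose of the inclusion $\GLV{\hFoliation}\subset\tFibMap(\Diff(\Foliation))$, which is the easy one. Any $\Alin\in\GLV{\hFoliation}$ permutes each leaf of $\hFoliation$ and therefore preserves $\func^{-1}(\rr)$ setwise for every $\rr\geqslant 0$; in particular it preserves the $\hFoliation$-saturated compact submanifold $\ATor=\func^{-1}([0;1])$. Thus $\tsect(\Alin):=\restr{\Alin}{\ATor}$ lies in $\Diff(\Foliation)$ and $\tsect$ is clearly a continuous homomorphism. Fiberwise linearity of $\Alin$ then gives $\tfib{\tsect(\Alin)}(\pv)=\lim_{t\to 0}\tfrac{1}{t}\Alin(t\pv)=\Alin(\pv)$, so $\tFibMap\circ\tsect=\id_{\GLV{\hFoliation}}$, which already establishes the desired inclusion.

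The opposite inclusion $\tFibMap(\Diff(\Foliation))\subset\GLV{\hFoliation}$ is where I expect the main obstacle to lie. Given $\dif\in\Diff(\Foliation)$ and $\pv\in\Eman$ with $\func(\pv)=\rr>0$, I would work with the leaf $\omega_{\pv}$ of $\hFoliation$ through $\pv$, i.e.\ the path component of $\func^{-1}(\rr)$ containing $\pv$. The key scaling observation to exploit is that by $k$-homogeneity of $\func$, multiplication by any $\lambda>0$ is a homeomorphism $\func^{-1}(\rr)\to\func^{-1}(\lambda^{k}\rr)$, and therefore carries $\omega_{\pv}$ bijectively onto $\omega_{\lambda\pv}=\lambda\omega_{\pv}$. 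Choosing $\lambda>0$ small enough that $\lambda\pv\in\ATor$, for every $t\in(0;1]$ the point $t\lambda\pv$ lies in the $\Foliation$-leaf $\omega_{t\lambda\pv}=t\omega_{\lambda\pv}$ and $\dif$ preserves that leaf, so the Hadamard map of Lemma~\ref{lm:Hadamard_for_vb} satisfies $\divhom_{\dif}(\lambda\pv,t)=\tfrac{1}{t}\dif(t\lambda\pv)\in\omega_{\lambda\pv}$ for all $t\in(0;1]$. Since $k$-homogeneity together with smoothness of $\func$ on $\Eman\setminus\fSing$ force $df(\pv)(\pv)=k\func(\pv)\neq 0$ whenever $\func(\pv)>0$, the level set $\func^{-1}(\lambda^{k}\rr)$ is a smooth codimension-one submanifold whose path components coincide with its connected components and are therefore closed in it. Passing to the limit $t\to 0$ then yields $\tfib{\dif}(\lambda\pv)=\divhom_{\dif}(\lambda\pv,0)\in\omega_{\lambda\pv}$, and fiberwise linearity $\tfib{\dif}(\lambda\pv)=\lambda\tfib{\dif}(\pv)$ together with $\omega_{\lambda\pv}=\lambda\omega_{\pv}$ gives $\tfib{\dif}(\pv)\in\omega_{\pv}$, as desired.

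Combining the two inclusions, $\tFibMap\colon\Diff(\Foliation)\epiArrow\GLV{\hFoliation}$ is a continuous surjective homomorphism with global homomorphic section $\tsect$ and kernel $\DiffFix[1]{\Foliation}{\fSing}$; the splitting diagram in the statement then follows directly from Lemma~\ref{lm:principal_fibrations}\ref{enum:lm:principal_fibrations:global_sect}.
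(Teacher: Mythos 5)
Your proof is correct, and both the easy inclusion $\GLV{\hFoliation}\subset\tFibMap(\Diff(\Foliation))$ (via the section $\tsect$) and the use of Lemma~\ref{lm:principal_fibrations}\ref{enum:lm:principal_fibrations:global_sect} coincide with the paper. Where you diverge is in the nontrivial inclusion $\tFibMap(\Diff(\Foliation))\subset\GLV{\hFoliation}$. The paper first observes $\func\circ\dif=\func$ on $\ATor$, upgrades it to $\func\circ\tfib{\dif}=\func$ by $k$-homogeneity, and then cites Lemma~\ref{lm:Fgood_nbh_props} (the machinery of $\Foliation$-isolating/$\Foliation$-scalable neighborhoods developed in Section~\ref{sect:scalable_partitions}) to pass from level-set preservation to path-component preservation. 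You instead give a self-contained argument: you track the Hadamard path $\divhom_{\dif}(\lambda\pv,t)\in\omega_{\lambda\pv}$ for $t\in(0;1]$, and close the argument at $t=0$ by noting that the regular level sets $\func^{-1}(\rr)$, $\rr>0$, are smooth (using the fiberwise Euler identity $d\func(\pv)(\pv)=k\func(\pv)\neq0$), so their path components are closed, after which you undo the scaling. This is essentially an inline re-proof of the relevant case of Lemma~\ref{lm:Fgood_nbh_props}\ref{enum:lm:Fgood_nbh_props:tfib}, with regularity of the level sets playing the role that the $\Foliation$-isolating hypothesis plays there. The tradeoff is purely expository: your version is more elementary and avoids the general scalability apparatus, while the paper's factoring through Lemma~\ref{lm:Fgood_nbh_props} keeps the lemma short and reuses machinery that is needed elsewhere (notably in the proof of Theorem~\ref{th:KhM:Fol}).
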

\begin{proof}
1) Let us show that $\tFibMap\bigl( \Diff(\Foliation) \bigr) \subset \GLV{\hFoliation}$.
Let $\dif\in\Diff(\Foliation)$, so $\dif$ preserves connected components of level sets of $\func$, and therefore $\func\circ\dif(\px)=\func(\px)$ for all $\px\in\ATor$.
Since $\func$ is $k$-homogeneous, it follows that $\func\circ\tfib{\dif}=\func$.
Moreover, by Lemma~\ref{lm:Fgood_nbh_props}\ref{enum:lm:Fgood_nbh_props:clGLESGG}, $\tfib{\dif}$ also preserves connected components of level sets of $\func$, i.e.\ the leaves of $\hFoliation$, and thus $\tFibMap(\dif) = \tfib{\dif}\in\GLV{\hFoliation}$.

2) Now consider the map $\tsect\colon \GLV{\hFoliation} \to \Diff(\Foliation)$, $\tsect(\Alin) = \restr{\Alin}{\ATor}$.
Since $\ATor$ is $\hFoliation$-saturated, each $\Alin\in\GLV{\hFoliation}$ leaves invariant $\ATor$ and preserves the leaves of $\Foliation$.
Hence we get a natural inclusion $\tsect:\GLV{\hFoliation} \to \Diff(\Foliation)$, $\tsect(\Alin) = \restr{\Alin}{\ATor}$.
Moreover, since $\Alin$ is a vector bundle automorphism, we have that
\[
    \tfib{(\tsect(\Alin))}(\pv) = \tfib{(\restr{\Alin}{\ATor})}(\pv) = \lim\limits_{t\to0} \tfrac{1}{t}\Alin(t\pv) = \Alin(\pv)
\]
for all $\pv\in\Eman$.
In other words, $\tFibMap\circ\tsect = \id_{\GLV{\hFoliation}}$, i.e.\ $\tsect$ is a section of $\tFibMap$.
This also implies the inverse inclusion $\GLV{\hFoliation} \subset \tFibMap\bigl(\Diff(\Foliation)\bigr)$, and thus proves the identity $\GLV{\hFoliation} = \tFibMap\bigl(\Diff(\Foliation)\bigr)$ and the existence of all the diagram.
\end{proof}

Consider the following subgroups of $\Diff(\Foliation)$:
\begin{align*}
    \GLV{\Foliation}            &:= \tsect(\GLV{\hFoliation}) = \{ \restr{\Alin}{\ATor} \mid \Alin\in\GLV{\hFoliation} \}, \\
    \GLVFix{\Foliation}{\fSing} &:= \tsect(\GLV{\hFoliation,\fSing}) = \GLV{\Foliation} \cap \DiffFix{\Foliation}{\fSing}.
\end{align*}

\begin{sublemma}\label{lm:DF_to_GF_hom_type}
In the inclusion of triples
\begin{equation}\label{equ:extr_subm:homot_equiv}
\begin{array}{lcr}
\bigl(
    \GLV{\Foliation}, \ &
    \GLVFix{\Foliation}{\fSing}, \ &
    \{ \id_{\ATor} \}
\bigr)  \\
& \cap & \\
\bigl(
    \Diff(\Foliation), &
    \DiffFix{\Foliation}{\fSing}, &
    \DiffFix[1]{\Foliation}{\fSing}
\bigr)
\end{array}
\end{equation}
the upper triple is a strong deformation retract of the lower one, and the deformation retraction is given by the map $\tsect\circ\tFibMap\colon\Diff(\Foliation)\to\GLV{\Foliation}$.
In particular, the group $\DiffFix[1]{\Foliation}{\fSing}$ is contractible, and the map of triples
\[
    \tFibMap\colon
    \bigl(
        \Diff(\Foliation),
        \DiffFix{\Foliation}{\fSing},
        \DiffFix[1]{\Foliation}{\fSing}
    \bigr)
    \to
    \bigl(
        \GLV{\hFoliation},
        \GLVFix{\hFoliation}{\fSing},
        \{ \id_{\Eman} \}
    \bigr),
    \quad
    \tFibMap(\dif)=\tfib{\dif},
\]
is a homotopy equivalence of triples.
Moreover, $\GLVFix{\hFoliation}{\fSing} = \tFibMap\bigl(\DiffFix{\Foliation}{\fSing}\bigr) \stackrel{\eqref{equ:tDFS_GLFol}}{\subset} \GLESGG{\GFolDistr}$.
\end{sublemma}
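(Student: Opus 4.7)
The plan is to construct an explicit strong deformation retraction of $\Diff(\Foliation)$ onto $\GLV{\Foliation}$ respecting each entry of the triple; all claims of the lemma will then follow from standard manipulations. Concretely, for $\dif\in\Diff(\Foliation)$, $\pv\in\ATor$, and $\tau\in[0;1]$, I would define
\[
    \Hhmt(\dif,\tau)(\pv) =
    \begin{cases}
        \tfrac{1}{\tau}\dif(\tau\pv), & \tau\in(0;1], \\
        \tfib{\dif}(\pv),             & \tau=0,
    \end{cases}
\]
so that formally $\Hhmt_1=\id_{\Diff(\Foliation)}$ and $\Hhmt_0=\tsect\circ\tFibMap$.

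The first order of business is well-definedness. Since $\func$ is $k$-homogeneous and non-negative, the dilation $m_\tau\colon\pv\mapsto\tau\pv$ is a diffeomorphism of $\ATor=\func^{-1}([0;1])$ onto the smaller sublevel set $\tau\ATor=\func^{-1}([0;\tau^{k}])$; the latter is $\dif$-invariant because $\dif$ preserves level sets of $\func$. Hence $\dif_\tau := m_{1/\tau}\circ\dif\circ m_\tau$ is a diffeomorphism of $\ATor$ for every $\tau\in(0;1]$. Continuity at $\tau=0$ together with the identification $\Hhmt(\dif,0)=\restr{\tfib{\dif}}{\ATor}$ comes directly from Lemma~\ref{lm:Hadamard_for_vb} applied with $\Vman_{\dif}=\ATor$: the Hadamard factorization $\divhom_{\dif}(\pv,\tau)$ agrees with $\tfrac{1}{\tau}\dif(\tau\pv)$ for $\tau>0$ and extends smoothly to $\tfib{\dif}(\pv)$ at $\tau=0$, so $\Hhmt$ is jointly continuous.

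The main delicate step is to verify that $\dif_\tau$ actually preserves each individual leaf of $\Foliation$ (not merely each level set of $\func$). Invariance of $\func$ is automatic from $k$-homogeneity. To pass from level sets to leaves, I would fix $\pv\in\ATor$ and observe that the path $\tau\mapsto\dif_\tau(\pv)$ is continuous, lies entirely in $\func^{-1}(\func(\pv))$, and passes through $\dif_1(\pv)=\dif(\pv)$, which belongs to the leaf $\omega$ of $\pv$ since $\dif\in\Diff(\Foliation)$; by connectedness the whole path stays in $\omega$, and the symmetric argument for $\dif^{-1}$ yields $\dif_\tau(\omega)=\omega$. The remaining invariance checks are routine: $\dif_\tau|_{\fSing}=\dif|_{\fSing}$ because $\tau\pbs=\pbs$ on the zero section $\fSing$; the substitution $u=\tau s$ in $\tfib{\dif_\tau}(\pv)=\lim_{s\to0}\tfrac{1}{s}\dif_\tau(s\pv)$ gives $\tfib{\dif_\tau}=\tfib{\dif}$; and when $\dif=\restr{\Alin}{\ATor}$ for a linear $\Alin\in\GLV{\hFoliation}$ the formula collapses to $\dif_\tau=\dif$. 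Together these imply that $\Hhmt$ restricts to a strong deformation retraction on each of the three pairs in~\eqref{equ:extr_subm:homot_equiv}.

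Finally, the remaining assertions are immediate consequences. On $\DiffFix[1]{\Foliation}{\fSing}$ one has $\tfib{\dif}=\id_\Eman$, hence $\Hhmt_0(\dif)=\id_\ATor$, exhibiting a contraction to the identity. The homotopy equivalence of triples via $\tFibMap$ follows by composing the just-constructed retraction with the section homeomorphism of Lemma~\ref{lm:sect_of_tFib}, which identifies $\GLV{\Foliation}$ with $\GLV{\hFoliation}$ and $\GLVFix{\Foliation}{\fSing}$ with $\GLVFix{\hFoliation}{\fSing}$. The last identification $\GLVFix{\hFoliation}{\fSing}=\tFibMap(\DiffFix{\Foliation}{\fSing})$ is obtained by noting that every $\Alin\in\GLVFix{\hFoliation}{\fSing}$ equals $\tfib{\tsect(\Alin)}$ with $\tsect(\Alin)\in\DiffFix{\Foliation}{\fSing}$, while the inclusion into $\GLESGG{\GFolDistr}$ is just~\eqref{equ:tDFS_GLFol}.
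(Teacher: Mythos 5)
Your proposal is correct and follows the same route as the paper: the paper also contracts $\Diff(\Foliation)$ onto $\GLV{\Foliation}$ along the Hadamard homotopy $G(\dif,t)(\px)=\divhom_{\dif}(\px,t)=\tfrac{1}{t}\dif(t\px)$ (with $G(\dif,0)=\tfib{\dif}$), notes $G(\dif,1)=\dif$ and $G(\dif,0)=\tsect\circ\tFibMap(\dif)$, and leaves the verification of the remaining invariance properties to the reader with a citation. What you add is a self-contained check of the delicate point — that each $\dif_\tau$ preserves leaves, not just level sets of $\func$ — via a path-connectedness argument: since leaves of $\Foliation$ are path components of level sets of a homogeneous $\func$, the continuous path $\tau\mapsto\dif_\tau(\pv)$ stays in $\func^{-1}(\func(\pv))$ and passes through $\dif(\pv)\in\omega$, hence stays in $\omega$; this is a slightly more elementary and more transparent verification than invoking the general scalability machinery of Lemma~\ref{lm:Fgood_nbh_props}, and it exploits the special structure (level sets of a single $k$-homogeneous function) available in this application.
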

\begin{proof}
In fact, the following map $G\colon\Diff(\Foliation)\times[0;1]\to\Diff(\Foliation)$,
\[
G(\dif,t)(\px) =
\divhom_{\dif}(\px,t) =
\begin{cases}
\tfrac{1}{t}\dif(t\px), & t\in(0;1] \\
\tfib{\dif}(\px),       & t = 0
\end{cases},
\qquad \px\in\ATor,
\]
is the required strong deformation retraction, see Lemma~\ref{lm:Hadamard_for_vb}.
It is clear that $G(\dif,1)=\dif$ and $G(\dif,0)=\tfib{\dif} = \tsect\circ\tFibMap(\dif)$.
We leave the verification of all necessary properties of $G$ and all other statements of the lemma for the reader, c.f.\ \cite[Theorem~3.1.2]{KhokhliukMaksymenko:JHRS:2023}.
\end{proof}

Lemma~\ref{lm:DF_to_GF_hom_type} reduces the computation of the homotopy types of the groups $\Diff(\Foliation)$ and $\DiffFix{\Foliation}{\fSing}$ to the homotopy types of groups $\GLV{\hFoliation}$ and $\GLVFix{\hFoliation}{\fSing}$.
It is actually a variation of~\cite[Theorem~3.1.2]{KhokhliukMaksymenko:JHRS:2023}, and it describes the simplest (in a certain sense) case of the above-mentioned ``linearization'' procedure.
Its proof is also an extension of standard formulas for the isotopies between regular neighborhoods.

\subsection{Trivial vector bundle}
The advantage of passing from $\Diff(\Foliation)$ to $\GLV{\Foliation}$ is that the homotopy type of the latter group can be computed by purely homotopical methods.
At least when $\vbp$ is a trivial fibration, and the restriction of $\func$ to each fiber is ``the same'', such computations are relatively easy and are done in this Subsection.

Let $S^{n-1} \subset\bR^{n}$ be the unit sphere centered at the origin.
Fix a continuous function $\gfunc\colon\bR^{n}\to[0;+\infty)$ such that
\begin{enumerate}[label={\rm(\alph*$'$)}]
\item\label{enum:gfunc:homog}  $\gfunc$ is $k$-homogeneous for some (possibly fractional) $k>0$;
\item\label{enum:gfunc:zero}   $\gfunc^{-1}(0)=0\in\bR^{n}$.
\item\label{enum:gfunc:smooth} $\restr{\gfunc}{\bR^{n}\setminus0}\colon\bR^{n}\setminus 0 \to (0;+\infty)$ is $\Cinfty$.
\end{enumerate}
\begin{sublemma}
For every $t>0$ the map
\[
    \phi_{t}\colon S^{n-1} \to \gfunc^{-1}(t),
    \qquad
    \phi_t(\px) = \bigl(\tfrac{t}{\gfunc(\px)}\bigr)^{1/k} \px,
\]
is diffeomorphism.
In particular, $\gfunc^{-1}(t)$ is path connected for $n\geqslant2$ and consists of two points for $n=1$.

Let $\LDiff(\gfunc)$ be the subgroup of $\GLR{n}$ preserving \term{connected components} of level sets of $\gfunc$.
Then $\LDiff(\gfunc)$ is closed in $\GLR{n}$, and therefore it is a Lie group.
\end{sublemma}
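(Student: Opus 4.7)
The plan is to treat the two assertions in sequence: first showing $\phi_{t}$ is a diffeomorphism, and then that $\LDiff(\gfunc)$ is closed.

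For the diffeomorphism claim, I would first verify that $\phi_{t}$ lands in $\gfunc^{-1}(t)$. This is a direct consequence of $k$-homogeneity: applying $\gfunc$ to $\phi_{t}(\px)$ gives $\bigl(\tfrac{t}{\gfunc(\px)}\bigr)\gfunc(\px) = t$. Since $\gfunc$ is $\Cinfty$ and strictly positive off the origin by~\ref{enum:gfunc:zero}--\ref{enum:gfunc:smooth}, the formula for $\phi_{t}$ is $\Cinfty$ on $S^{n-1}$. I would then write down the candidate inverse $\psi_{t}\colon \gfunc^{-1}(t)\to S^{n-1}$, $\psi_{t}(\py) = \py/\nrm{\py}$, and check $\psi_{t}\circ\phi_{t}=\id_{S^{n-1}}$ directly, and $\phi_{t}\circ\psi_{t}=\id$ on $\gfunc^{-1}(t)$ via the computation $\gfunc(\py/\nrm{\py}) = \gfunc(\py)/\nrm{\py}^{k} = t/\nrm{\py}^{k}$, so that $(t/\gfunc(\py/\nrm{\py}))^{1/k} = \nrm{\py}$. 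As $\gfunc$ vanishes only at $0$, $\gfunc^{-1}(t)$ is contained in $\bR^{n}\setminus 0$, and $\psi_{t}$ is therefore $\Cinfty$. Hence $\phi_{t}$ is a diffeomorphism, and the path-connectivity (resp.\ two-point) statement follows since $S^{n-1}$ is connected for $n\geqslant 2$ and equals $\{\pm 1\}$ for $n=1$.

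For the closedness of $\LDiff(\gfunc)$, my strategy is to factor the condition into two nested closedness statements. Let
\[
    \Aman = \{ A\in\GLR{n} \mid \gfunc\circ A = \gfunc \}.
\]
This is closed in $\GLR{n}$ by exactly the same argument as Lemma~\ref{lm:pres_func}: for each $\px\in\bR^{n}$ the evaluation $A\mapsto \gfunc(A\px)-\gfunc(\px)$ is continuous, so $\Aman$ is an intersection of closed sets. Any $A\in\LDiff(\gfunc)$ preserves every level set of $\gfunc$, hence $\LDiff(\gfunc)\subset\Aman$, so it suffices to show $\LDiff(\gfunc)$ is closed in $\Aman$.

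Within $\Aman$, for each fixed $\px\in\bR^{n}\setminus 0$ the orbit map $\ev_{\px}\colon\Aman\to\gfunc^{-1}(\gfunc(\px))$, $A\mapsto A(\px)$, is continuous. From the first part, $\gfunc^{-1}(\gfunc(\px))$ is either connected (if $n\geqslant 2$) or discrete of two points (if $n=1$); in either case each of its connected components is simultaneously open and closed. Therefore, if $\omega_{\px}$ denotes the component containing $\px$, the set $\ev_{\px}^{-1}(\omega_{\px})$ is clopen in $\Aman$, and
\[
    \LDiff(\gfunc) = \bigcap_{\px\in\bR^{n}\setminus 0} \ev_{\px}^{-1}(\omega_{\px})
\]
is an intersection of closed subsets of $\Aman$, hence closed in $\Aman$ and thus in $\GLR{n}$. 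By the closed subgroup theorem (Cartan), $\LDiff(\gfunc)$ is a Lie subgroup of $\GLR{n}$.

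The only mildly subtle point is the argument for $n=1$: the level set $\gfunc^{-1}(t)$ consists of two distinct points (possibly not symmetric, since $\gfunc(1)$ and $\gfunc(-1)$ need only be positive), and one should just note the components are still clopen so the argument goes through uniformly. I expect no serious obstacle; the main care is simply to invoke $k$-homogeneity with $t\geqslant 0$ (as stated in~\ref{enum:gfunc:homog}) and to treat the negative scaling by writing $\psi_{t}(\py)=\py/\nrm{\py}$ rather than by invoking homogeneity with negative scalars.
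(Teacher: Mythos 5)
Your proof is correct, and for the closedness part it takes a genuinely different and arguably cleaner route than the paper. The paper splits into two cases: for $n\geqslant 2$ it notes that each level set is connected, so preserving components is the same as preserving $\gfunc$, which is a closed condition by Lemma~\ref{lm:pres_func}; for $n=1$ it instead computes $\LDiff(\gfunc)$ explicitly as a finite subgroup of $\GLR{1}$ (and its displayed formula appears to have the cases $a=b$ and $a\neq b$ swapped, though this does not affect the closedness conclusion). Your argument factors $\LDiff(\gfunc)$ as a closed subset $\Aman=\{A:\gfunc\circ A=\gfunc\}$ (closed exactly as in Lemma~\ref{lm:pres_func}) intersected with the sets $\ev_{\px}^{-1}(\omega_{\px})$, each clopen in $\Aman$ because the components of the level sets --- spheres for $n\geqslant 2$, two points for $n=1$ --- are clopen. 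This handles both values of $n$ uniformly and sidesteps the explicit $n=1$ computation altogether. The one small point you leave implicit is that $A\px\in\omega_{\px}$ for all $\px\neq 0$ actually forces $A(\omega)=\omega$ for every component $\omega$; this follows since each $A\in\Aman$ is a homeomorphism of the level set, hence permutes its components, and a component mapped into itself by a component permutation equals its image. Your explicit proof that $\phi_t$ is a diffeomorphism (via the inverse $\py\mapsto\py/\nrm{\py}$) is also correct; the paper simply declares that part ``straightforward.''
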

\begin{proof}
The first statement is straightforward.

Consider the group $\LDiff(\gfunc)$.
If $n\geqslant2$, then each level set of $\gfunc$ is connected being diffeomorphic to $S^{n-1}$, whence $\LDiff(\gfunc)$ is closed by Lemma~\ref{lm:pres_func}.

Let $n=1$.
Denote $a=\gfunc(1)$ and $b=\gfunc(-1)$.
Notice that since we require $\gdif$ to be $\Cinfty$ only on $\bR\setminus 0$ the numbers $a,b$ may be distinct.
Then it is easy to see that
\begin{align*}
   \gdif(\pv)&:=
   \begin{cases}
        a\pv^k,    & \pv\geqslant0, \\
        b(-\pv)^k, & \pv<0.
   \end{cases}
   &
   \LDiff(\gfunc)&:=
   \begin{cases}
        \{ \pm\id_{\bR} \}, & a\neq b, \\
        \{ \id_{\bR} \},    & a=b.
   \end{cases}
\end{align*}
Hence, $\LDiff(\gfunc)$ is also closed in $\GLR{1}=\bR\setminus0$ for $n=1$.
\end{proof}

Now let $\fSing$ be a compact manifold, $\vbp\colon\Eman=\bR^{n}\times\fSing\to\fSing$ be a trivial vector bundle, and $\func\colon\Eman\to[0;\infty)$ be the function defined by $\func(\pv,\pbs) = \gfunc(\pv)$.
Then the conditions~\ref{enum:gfunc:homog}-\ref{enum:gfunc:smooth} on $\gfunc$ imply that $\func$ satisfies the conditions~\ref{enum:hfunc:homog}-\ref{enum:hfunc:smooth} from Subsection~\ref{sect:applications_claims}.
As above, denote $\ATor = \func^{-1}([0;1]) = \gfunc^{-1}([0;1])\times\fSing$ and let $\hFoliation$ be the partition of $\Eman$ into connected components of level sets of $\func$, and $\Foliation = \restr{\hFoliation}{\ATor}$ be the restriction of $\hFoliation$ to $\ATor$.
Evidently, each leaf of $\hFoliation$ is of the form $\omega\times\fSing'$, where $\omega$ is a path component of some level set of $\gfunc$ and $\fSing'$ is a path component of $\fSing$.

By Lemma~\ref{lm:DF_to_GF_hom_type} the map $\tFibMap\colon\Diff(\Foliation)\to\GLV{\hFoliation}$ is a homotopy equivalence.
The following statement simplifies the study of the homotopy type of $\GLV{\hFoliation}$.
\begin{sublemma}\label{lm:triv_vb_fol}
There are homeomorphisms
\begin{align*}
    \gamma\colon& \Ci{\fSing}{\LDiff(\gfunc)} \to \GLVFix{\hFoliation}{\fSing}, &
    &\gamma(\Phi)(\pv,\pbs) = \bigl(\Phi(\pbs)\pv, \pbs \bigr),
    \\
    \sigma\colon& \GLVFix{\hFoliation}{\fSing} \times \Diff(\fSing) \to \GLV{\hFoliation}, &
    &\sigma(\Alin, \dif)(\pv,\pbs) = \bigl( \Alin(\pv), \dif(\pbs)\bigr),
\end{align*}
for all $\Phi\in\Ci{\fSing}{\LDiff(\gfunc)}$, $\Alin\in\GLVFix{\hFoliation}{\fSing}$, and $\dif\in\Diff(\fSing)$.
In particular, due to Lemma~\ref{lm:DF_to_GF_hom_type}, we have the following homotopy equivalences:
\begin{align}\label{equ:DF_CiSLg}
    &\DiffFix{\Foliation}{\fSing} \simeq \Ci{\fSing}{\LDiff(\gfunc)}, &
    &\Diff(\Foliation) \simeq \Ci{\fSing}{\LDiff(\gfunc)} \times \Diff(\fSing).
    \qedhere
\end{align}
\end{sublemma}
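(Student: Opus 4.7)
The plan is to exploit the trivialisation $\Eman=\bR^n\times\fSing$ to give a purely linear-algebraic description of the groups $\GLVFix{\hFoliation}{\fSing}$ and $\GLV{\hFoliation}$, verify by direct inspection that $\gamma$ and $\sigma$ are well-defined homeomorphisms, and then invoke Lemma~\ref{lm:DF_to_GF_hom_type} to transfer this description to $\DiffFix{\Foliation}{\fSing}$ and $\Diff(\Foliation)$.

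First I would handle $\gamma$. Since $\vbp$ is trivial, every fibre-preserving vector bundle automorphism $\Alin\in\GLES$ takes the form $\Alin(\pv,\pbs)=(\Phi(\pbs)\pv,\pbs)$ for a unique smooth map $\Phi\colon\fSing\to\GLR{n}$, and the assignment $\Alin\mapsto\Phi$ is the standard homeomorphism $\GLES\cong\Ci{\fSing}{\GLR{n}}$ in the strong $\Cinfty$ topologies. Each leaf of $\hFoliation$ has the form $\omega\times\fSing'$ where $\omega$ is a path component of some level set of $\gfunc$ and $\fSing'$ is a path component of $\fSing$; hence leaf-preservation for $\Alin$ is equivalent to $\Phi(\pbs)\omega\subset\omega$ for every such $\omega$ and every $\pbs$, and varying $\omega$ one sees this is the same as $\Phi(\pbs)\in\LDiff(\gfunc)$ for every $\pbs\in\fSing$. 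This identifies $\GLVFix{\hFoliation}{\fSing}$ with $\Ci{\fSing}{\LDiff(\gfunc)}$ via $\gamma$.

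Next I would treat $\sigma$ by the same trivialisation argument. An arbitrary vector bundle automorphism of $\bR^n\times\fSing$ covers some $\dif\in\Diff(\fSing)$ and has the form $F(\pv,\pbs)=(B(\pbs)\pv,\dif(\pbs))$ with $B\colon\fSing\to\GLR{n}$ smooth; membership in $\GLV{\hFoliation}$ forces $\dif$ to preserve each path component of $\fSing$ and $B(\pbs)\in\LDiff(\gfunc)$ for every $\pbs$, by exactly the leaf-preservation analysis from the previous paragraph. Setting $\Alin(\pv,\pbs):=(B(\pbs)\pv,\pbs)\in\GLVFix{\hFoliation}{\fSing}$ produces the assignment $F\mapsto(\Alin,\dif)$ inverse to $\sigma$, and both $\sigma$ and its inverse are continuous because the extraction of $\dif$ and $B$ from $F$ is manifestly continuous in the strong $\Cinfty$ topology.

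Finally, composing the homotopy equivalence of triples supplied by Lemma~\ref{lm:DF_to_GF_hom_type} with the homeomorphisms $\sigma^{-1}$ and $\gamma^{-1}$ yields the two homotopy equivalences in~\eqref{equ:DF_CiSLg}. The only point requiring genuine care, rather than a serious obstacle, is the bookkeeping when $\fSing$ is disconnected: the image of $\sigma$ then consists precisely of those $F\in\GLV{\hFoliation}$ whose base diffeomorphism preserves each path component of $\fSing$ individually, so one must either restrict the factor $\Diff(\fSing)$ accordingly or verify that the remaining path components of $\Diff(\fSing)$ do not contribute leaf-preserving automorphisms. Apart from this, all verifications are routine given the explicit matrix-valued formulas in the trivialisation.
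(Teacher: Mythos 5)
Your proof is correct and follows the paper's argument closely on the identification of $\GLVFix{\hFoliation}{\fSing}$ with $\Ci{\fSing}{\LDiff(\gfunc)}$ via $\gamma$ --- both derive the matrix-valued map $\Phi$ from the trivialisation, verify $\Phi(\pbs)\in\LDiff(\gfunc)$ via leaf-preservation, and check smoothness via the coordinate columns. For $\sigma$ you diverge slightly: where you decompose an arbitrary $F\in\GLV{\hFoliation}$ explicitly as $(B(\pbs)\pv,\dif(\pbs))$ and build the inverse of $\sigma$ by hand, the paper instead notes that the ``restriction to $\fSing$'' homomorphism $\tRestr\colon\GLV{\hFoliation}\to\Diff(\fSing)$ admits the global section $s(\dif)(\pv,\pbs)=(\pv,\dif(\pbs))$ and invokes Lemma~\ref{lm:principal_fibrations}\ref{enum:lm:principal_fibrations:global_sect}, which immediately delivers the homeomorphism $\xi\colon\ker(\tRestr)\times\Diff(\fSing)\to\GLV{\hFoliation}$. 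Both routes are equivalent; the paper's packages the computation into an earlier abstract lemma, yours shows the same content concretely. Your remark about disconnected $\fSing$ is in fact a legitimate gap in the paper's statement as written: the path components of $\fSing$ are themselves leaves of $\hFoliation$ (being the path components of $\func^{-1}(0)$), so the map $s(\dif)$ fails to land in $\GLV{\hFoliation}$ whenever $\dif$ permutes those components, and $\tRestr$ is not surjective onto all of $\Diff(\fSing)$; the correct target should be the subgroup of $\Diff(\fSing)$ preserving each component. The paper does not address this explicitly (its downstream applications have $\fSing=\Circle$ connected, so nothing breaks there), and your caveat is the appropriate fix.
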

\begin{proof}
1) First we show surjectivity of $\gamma$, i.e.\ that for every $\Alin\in\GLVFix{\hFoliation}{\fSing}$ there exists a $\Cinfty$ map $\Phi\colon\fSing\to\LDiff(\gfunc)$ such that $\Alin(\pv,\pbs)=\bigl(\Phi(\pbs)\pv, \pbs \bigr)$.

Indeed, by definition $\Alin$ is a $\Cinfty$ map $\Alin\colon\bR^{n}\times\fSing\to\bR^{n}\times\fSing$ having the following properties.
\begin{enumerate}[leftmargin=*, label={\alph*)}]
\item
$\Alin$ preserves the fibers $\Eman_{\pbs} = \bR^{n} \times\{\pbs\}$ for all $\pbs\in\fSing$, and its restriction to $\Eman_{\pbs}$ is a linear isomorphism.

\item
$\Alin$ also preserves the leaves of $\hFoliation$, i.e.\ sets of the form $\omega\times\fSing'$, where $\omega$ is a path component of some level set of $\gfunc$, and $\fSing'$ is a connected component of $\fSing$.
Hence, $\restr{\Alin}{\bR^{n} \times\{\pbs\}} = \Phi(\pbs)$ preserves $\omega\times\{\pbs\}$, and thus belongs to $\LDiff(\gfunc)$.
\end{enumerate}

The first property implies that $\Alin(\pv,\pbs) = (\Phi(\pbs)\pv,\pbs)$, where $\Phi(\pbs)\in\GLR{n}$ is some non-degenerate matrix depending on $\pbs$, so we get a map $\Phi\colon\fSing\to\GLR{n}$.
Let $e_1,\ldots,e_n$ be the standard basis of $\bR^{n}$, so $e_i=(0,\ldots,1,\ldots,0)$, where $1$ stands at position $i$ and all other coordinates are zeros.
Then for each $i=1,\ldots,n$ the map
$\Alin_i:= \restr{\Alin}{e_i \times\fSing} \colon \fSing\to\bR^{n}\times\fSing$ is $\Cinfty$ (since $\Alin$ is so), and the coordinate functions of $\Alin_i(\pbs)$ constitute the $i$-th column of the matrix $\Phi(\pbs)$.
Hence, $\Phi=(\Alin_1,\ldots,\Alin_n)\colon\fSing\to\GLR{n}$ is $\Cinfty$ as well.
Furthermore, by the second condition, the image of $\Phi$ is contained in the Lie subgroup $\LDiff(\gfunc)$ of $\GLR{n}$, whence, the induced map $\Phi\colon\fSing\to\LDiff(\gfunc)$ is also $\Cinfty$.

We leave to the reader the verification that for every $\Phi\in\Ci{\fSing}{\LDiff(\gfunc)}$ the map $\gamma(\Phi)\in\GLV{\hFoliation}$, and that (due to compactness of $\fSing$) the map $\gamma$ is a homeomorphism with respect to the corresponding Whitney topologies.

2) Note that we have a natural restriction of $\fSing$ homomorphism $\tRestr\colon\GLV{\hFoliation}\to\Diff(\fSing)$, $\tRestr(\Alin) = \restr{\Alin}{\fSing}$, with kernel $\GLV{\hFoliation,\fSing}$.
Since $\vbp$ is a trivial vector bundle, $\tRestr$ admits the following global section $s:\Diff(\fSing)\to\GLV{\hFoliation}$, $s(\dif)(\pv,\pbs)=(\pv,\dif(\pbs))$, being also a homomorphism.
Now by Lemma~\ref{lm:principal_fibrations}\ref{enum:lm:principal_fibrations:global_sect}, this gives the homeomorphism $\sigma$.
\end{proof}

Thus, in this case the computation of the homotopy types of $\Diff(\Foliation)$ and $\DiffFix{\Foliation}{\fSing}$ reduces to two independent problems of computing the homotopy types of $\Diff(\fSing)$ and of the space of smooth maps $\Ci{\fSing}{\LDiff(\gfunc)}$.
Moreover, if $\gfunc$ is a non-degenerate definite quadratic form, so $\Foliation$ is Morse-Bott, then we will get a more detailed statement, see Corollary~\ref{cor:th:homtype_DFol_extr_mb-fol}.

Before formulating it let us prove a simple auxiliary lemma concerning maps from and to the circle.
Let $1\in\Circle \subset \bC$ be the unit number in the circle.
\begin{sublemma}\label{lm:some_homeq}
{\rm 1)}~Let $G$ be a compact Lie group with the identity path component $G_0$ and unit $e$, and $\Omega^{\infty}(G_0) = \Ci{(\Circle,1)}{(G_0,e)}$ be the space of $\Cinfty$ maps $\phi:\Circle\to G$ such that $\phi(1)=e$, i.e.\ the space of $\Cinfty$ loops in $G_0$ at $e$.
Then we have a homeomorphism
\[
    \theta\colon\Omega^{\infty}(G_0) \times G \to \Ci{\Circle}{G},
    \qquad
    \theta(\phi, g)(a) = \phi(\px) g.
\]

Let $\Omega(G_0) = \Cont{(\Circle,1)}{(G_0,e)}$ be the space of \term{continuous} loop in $G_0$ at $e$.
Then the inclusion $\Omega^{\infty}(G_0) \subset \Omega(G_0)$ is a weak homotopy equivalence.
In particular, we have the following homotopy equivalence: $\Ci{\Circle}{G} \simeq \Omega(G_0) \times G$.

{\rm 2)}~Also, let $\Aman$ be a connected smooth manifold, $a\in\Aman$ any point, $\Chid{\Aman}{\Circle}$ be the subspace of $\Ci{\Aman}{\Circle}$ consisting of null-homotopic maps.
Then the ``evaluation at $a$'' homomorphism
\[
    \delta\colon\Chid{\Aman}{\Circle}\to\Circle,
    \qquad
    \delta(\phi) = \phi(a),
\]
is a homotopy equivalence.
\end{sublemma}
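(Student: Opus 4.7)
For the homeomorphism in part 1), I would construct the inverse of $\theta$ explicitly. Since $\Circle$ is connected, every $\psi\in\Ci{\Circle}{G}$ takes values in a single path component of $G$, namely the coset $\psi(1)G_0$. Therefore $\phi_\psi(a):=\psi(a)\psi(1)^{-1}$ is a smooth loop in $G_0$ based at $e$, and the assignment $\psi\mapsto(\phi_\psi,\psi(1))$ is continuous in the strong $\Cinfty$ topology (from continuity of evaluation at $1$ and of pointwise group operations), providing a two-sided inverse to $\theta$. For the weak homotopy equivalence $\Omega^{\infty}(G_0)\hookrightarrow\Omega(G_0)$, I would invoke standard smooth approximation on the Lie group $G_0$: any continuous based map $S^k\to\Omega(G_0)$ corresponds to a continuous based map $S^k\times\Circle\to G_0$ which may be smoothed rel.\ the basepoint (for instance, using the exponential chart of $G_0$ near $e$ together with partition-of-unity convolution), and the same argument smooths based homotopies. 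This yields bijections on all $\pi_k$, and combined with $\theta$ gives the homotopy equivalence $\Ci{\Circle}{G}\simeq\Omega(G_0)\times G$.

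For part 2), I would first observe that $\Chid{\Aman}{\Circle}$ is an abelian topological group under pointwise multiplication: the null-homotopic maps are closed under products and inverses since $\Circle$ is abelian (the product of null-homotopies is a null-homotopy). The evaluation $\delta$ is a continuous homomorphism, and the constant-map inclusion $\iota\colon\Circle\hookrightarrow\Chid{\Aman}{\Circle}$ is a continuous homomorphic section with $\delta\circ\iota=\id_{\Circle}$. Lemma~\ref{lm:principal_fibrations}\ref{enum:lm:principal_fibrations:global_sect} then yields a homeomorphism $\Chid{\Aman}{\Circle}\cong\ker(\delta)\times\Circle$, where $\ker(\delta)=\{\phi\in\Chid{\Aman}{\Circle}\mid\phi(a)=1\}$. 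It therefore suffices to show that $\ker(\delta)$ is contractible, for then $\delta$ projects onto the second factor $\Circle$ up to homeomorphism and is a homotopy equivalence.

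To contract $\ker(\delta)$, I would use the universal cover $\exp\colon\bR\to\Circle$, $\exp(t)=e^{2\pi it}$. Every null-homotopic smooth map $\phi\colon\Aman\to\Circle$ with $\phi(a)=1$ admits a unique smooth lift $\tilde\phi\colon\Aman\to\bR$ through $\exp$ with $\tilde\phi(a)=0$ (existence from null-homotopy and the covering lifting criterion, uniqueness from connectedness of $\Aman$ together with the normalization). The correspondence $\phi\leftrightarrow\tilde\phi$ is a homeomorphism between $\ker(\delta)$ and the convex subspace $\{f\in\Ci{\Aman}{\bR}\mid f(a)=0\}$, which contracts linearly to $0$. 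The main technical subtlety is the smooth approximation used in part 1), where one must carry out the smoothing compatibly with the basepoint condition; everything else reduces to the algebraic group structure and standard covering space lifting, which present no essential difficulties.
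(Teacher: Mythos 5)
Your proof is correct and follows essentially the same route as the paper: the explicit inverse $\psi\mapsto(\psi(\cdot)\psi(1)^{-1},\psi(1))$ is what the paper means by ``straightforward,'' the weak homotopy equivalence is dispatched by smooth approximation (the paper cites a reference for it), and for part~2) you, like the paper, invoke Lemma~\ref{lm:principal_fibrations}\ref{enum:lm:principal_fibrations:global_sect} with the constant-map global section and identify $\ker(\delta)$ with the contractible space $\Ci{(\Aman,a)}{(\bR,0)}$ via unique lifting to the universal cover. Your extra observation that $\Chid{\Aman}{\Circle}$ is indeed closed under products and inverses (because $\Circle$ is abelian and pointwise products of null-homotopies are null-homotopies) is a detail the paper leaves implicit but is worth making explicit.
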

\begin{proof}
1) The fact that $\theta$ is a homeomorphism is straightforward.
It is also well-known that inclusion $\Omega^{\infty}(G_0) \subset \Omega(G_0)$ is a weak homotopy equivalence, see~\cite{KhokhliukMaksymenko:PIGC:2020} for discussions.

2) It suffices to show that $\delta$ admits a global section and its kernel is contractible.
Then by Lemma~\ref{lm:principal_fibrations}\ref{enum:lm:principal_fibrations:global_sect}, $\delta$ will be a homotopy equivalence.

Notice that $\Chid{\Aman}{\Circle}$ is a group with respect to pointwise multiplication and $\delta$ is a surjective homomorphism.
Evidently, it admits a global section $\Circle\to\Chid{\Aman}{\Circle}$ associating to each $\tau\in\Circle$ the constant map $\Aman\to\{\tau\}\subset\Circle$ into the point $\tau$.

Further, $\ker(\delta)=\Chid{(\Aman,a)}{(\Circle,1)}$, i.e.\ it is the subset of $\Chid{\Aman}{\Circle}$ consisting of maps $\phi\colon\Aman\to\Circle$ such that $\phi(a)=1$.
Let also $\vbp\colon\bR\to\Circle$, $\vbp(t)=e^{2\pi i t}$, be the universal covering map.
Then for each $\phi\in\ker(\delta)$ there exists a unique lifting $\hat{\phi}\colon\Aman\to\bR$ such that $\hat{\phi}(a)=0$.
It is easy to see that the correspondence $\phi\mapsto\hat{\phi}$ is a homeomorphism $\ker(\delta) \cong \Ci{(\Aman,a)}{(\bR,0)}$, and the latter space is contractible.
\end{proof}

\begin{subcorollary}\label{cor:th:homtype_DFol_extr_mb-fol}
Let $\vbp\colon\Eman=\bR^{n}\times\fSing\to\fSing$ be a trivial vector bundle, and $\func\colon\bR^{n}\times\fSing\to\bR$ be the Morse-Bott function given by $\func(\pv,\pbs)=(\pv,\pv) = \nrm{\pv}^2$.
Then the homotopy types of $\Diff(\Foliation)$ and $\DiffFix{\Foliation}{\fSing}$ for different values of $n$ are presented in the following table:
\[
\begin{array}{|c|c|c|}\hline
            & \DiffFix{\Foliation}{\fSing} \simeq \GLVFix{\hFoliation}{\fSing} & \Diff(\Foliation)  \simeq \GLV{\hFoliation}  \\ \hline
    n=1     & \text{point}                                                     & \Diff(\fSing)                                \\ \hline
    n\geqslant2  & \Ci{\fSing}{\Ort(n)}                                             & \Ci{\fSing}{\Ort(n)} \times \Diff(\fSing)    \\ \hline
\end{array}
\]
If $\fSing = \Circle$, then by Lemma~\ref{lm:some_homeq}, for $n\geqslant2$ the above homotopy equivalences imply the following \term{homotopy equivalences}:
\begin{align}\label{equ:hom_eq:DFCircle}
    &\DiffFix{\Foliation}{\fSing} \simeq \Omega^{\infty}(\SO(n))\times \Ort(n),
    &
    &\Diff(\Foliation) \simeq \Omega^{\infty}(\SO(n))\times \Ort(n) \times \Ort(2).
\end{align}
\end{subcorollary}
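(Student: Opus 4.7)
The plan is to specialize the general framework of Subsection~\ref{sect:applications_claims} to the function $\gfunc\colon\bR^n\to[0;+\infty)$, $\gfunc(\pv)=\nrm\pv^2$, which trivially satisfies hypotheses (a$'$)--(c$'$): it is $2$-homogeneous, vanishes exactly at the origin, and is smooth. Under this choice the induced function $\func(\pv,\pbs)=\gfunc(\pv)$ on $\Eman=\bR^n\times\fSing$ is precisely the Morse-Bott function of the statement, and the partition $\Foliation$ of $\ATor=\func^{-1}([0;1])$ defined in the corollary coincides with the restriction of $\hFoliation$ to $\ATor$. Lemma~\ref{lm:triv_vb_fol} then supplies the homotopy equivalences
\[
\DiffFix{\Foliation}{\fSing}\simeq\Ci{\fSing}{\LDiff(\gfunc)}, \qquad \Diff(\Foliation)\simeq\Ci{\fSing}{\LDiff(\gfunc)}\times\Diff(\fSing),
\]
so the entire task reduces to identifying the Lie subgroup $\LDiff(\gfunc)\subset\GLR n$ in each of the two dimension cases.

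I would carry out this identification directly. For $n=1$, every nontrivial level set $\gfunc^{-1}(c)=\{\sqrt c,-\sqrt c\}$ splits into two singleton components, so a linear $\lambda\in\bR^{*}$ belongs to $\LDiff(\gfunc)$ only if it preserves each of these singletons setwise; this forces $\lambda=1$, whence $\LDiff(\gfunc)=\{\id_\bR\}$ and $\Ci{\fSing}{\LDiff(\gfunc)}$ collapses to a single point. For $n\geqslant 2$ every level set $\gfunc^{-1}(c)=S^{n-1}_{\sqrt c}$ is connected, so $A\in\LDiff(\gfunc)$ iff $A$ maps each sphere centered at $0$ to itself, which is equivalent to $\nrm{A\pv}=\nrm\pv$ for all $\pv$, i.e.\ $A\in\Ort(n)$. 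Combined with the previous equivalences, these computations fill in the two rows of the table, using also the obvious fact that $\Ci{\fSing}{\{\id_\bR\}}$ is a single point.

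For the refined claim~\eqref{equ:hom_eq:DFCircle} in the special case $\fSing=\Circle$, $n\geqslant 2$, I would plug the result just obtained into Lemma~\ref{lm:some_homeq}(1) applied to $G=\Ort(n)$ with identity path component $\SO(n)$, obtaining $\Ci{\Circle}{\Ort(n)}\simeq\Omega^{\infty}(\SO(n))\times\Ort(n)$, and then invoke the classical homotopy equivalence $\Diff(\Circle)\simeq\Ort(2)$ mentioned in the introduction. The only conceptual subtlety lies in the $n=1$ case: one must notice that $-\id_\bR$ preserves the function $\gfunc$ yet swaps the two components of every nontrivial level set and therefore fails to be leaf-preserving, which is precisely why $\LDiff(\gfunc)$ collapses to the trivial group rather than to $\Ort(1)$. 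All remaining steps are routine and follow directly from results already established in earlier sections of the paper.
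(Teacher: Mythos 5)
Your proposal is correct and follows essentially the same route as the paper's proof: specialize to $\gfunc(\pv)=\nrm{\pv}^2$, invoke the homotopy equivalences~\eqref{equ:DF_CiSLg} from Lemma~\ref{lm:triv_vb_fol}, and identify $\LDiff(\gfunc)$ as $\{\id_{\bR}\}$ for $n=1$ (since each nonzero level set has two one-point components) and as $\Ort(n)$ for $n\geqslant2$ (since each nonzero level set is the connected sphere $S^{n-1}$). Your remark singling out why $-\id_{\bR}$ fails to be leaf-preserving, together with the explicit invocation of Lemma~\ref{lm:some_homeq} and $\Diff(\Circle)\simeq\Ort(2)$ for the $\fSing=\Circle$ case, makes the argument slightly more explicit than the paper's but does not differ from it in substance.
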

\begin{proof}
Consider the following function $\gfunc:\bR^{n}\to[0;+\infty)$, $\gfunc(\pv) = (\pv,\pv) =\nrm{\pv}^2$.
Then $\func(\pv,\pbs)=\gfunc(\pv) = \nrm{\pv}^2$, whence we have homotopy equivalences~\eqref{equ:DF_CiSLg}.
Recall also that $\LDiff(\gfunc)$ is the group of linear automorphisms of $\bR$ fixing every connected component of each level set of $\gfunc$.
Then for $n=1$ each level set $\gfunc^{-1}(t)$, $t>0$, consists of two points $\pm\sqrt{t}$, whence $\LDiff(\gfunc)=\{\id_{\bR}\}$, which implies the first row of the table.
On the other hand, if $n\geqslant2$, then each level set of $\gfunc$ is the $(n-1)$-sphere $S^{n-1}$, whence $\LDiff(\gfunc)=\Ort(n)$ is the group of linear automorphisms preserving $\gfunc$.
This gives the second row of the table.
\end{proof}

The author is sincerely grateful to anonymous Referee of the paper for careful reading of the initial manuscript, useful comments and suggestions.


\begin{thebibliography}{10}

\bibitem{Atiyah:Kth:1967}
M.~F. Atiyah.
\newblock {\em {$K$}-theory}.
\newblock W. A. Benjamin, Inc., New York-Amsterdam, 1967.
\newblock Lecture notes by D. W. Anderson.

\bibitem{BanyagaHurtubise:EM:2004}
Augustin Banyaga and David~E. Hurtubise.
\newblock A proof of the {M}orse-{B}ott lemma.
\newblock {\em Expo. Math.}, 22(4):365--373, 2004.
\newblock URL: \url{https://doi.org/10.1016/S0723-0869(04)80014-8}, \href
  {https://doi.org/10.1016/s0723-0869(04)80014-8}
  {\path{doi:10.1016/s0723-0869(04)80014-8}}.

\bibitem{BatistaCostaMezaSarmiento:JS:2018}
E.~B. Batista, J.~C.~F. Costa, and I.~S. Meza-Sarmiento.
\newblock Topological classification of circle-valued simple {M}orse-{B}ott
  functions.
\newblock {\em J. Singul.}, 17:388--402, 2018.

\bibitem{Cerf:BSMF:1961}
Jean Cerf.
\newblock Topologie de certains espaces de plongements.
\newblock {\em Bull. Soc. Math. France}, 89:227--380, 1961.

\bibitem{Cerf:LMN:1968}
Jean Cerf.
\newblock {\em Sur les diff\'{e}omorphismes de la sph\`ere de dimension trois
  {$(\Gamma \sb{4}=0)$}}, volume No. 53 of {\em Lecture Notes in Mathematics}.
\newblock Springer-Verlag, Berlin-New York, 1968.

\bibitem{Chernavski:MSb:1969}
A.~V. Chernavsky.
\newblock Local contractibility of the group of homeomorphisms of a manifold.
\newblock {\em Mat. Sb. (N.S.)}, 79(121):307--356, 1969.

\bibitem{Dubovski:MOF:2017}
B.~Dubovski.
\newblock Proof that smooth positive degree $m$ homogeneous function is
  polynomial of degree $m$ and $m$ is a positive integer, 2017.
\newblock \url{https://math.stackexchange.com/q/2303795}.

\bibitem{EarleSchatz:DG:1970}
C.~J. Earle and A.~Schatz.
\newblock Teichm\"{u}ller theory for surfaces with boundary.
\newblock {\em J. Differential Geometry}, 4:169--185, 1970.
\newblock \href {https://doi.org/10.4310/jdg/1214429381}
  {\path{doi:10.4310/jdg/1214429381}}.

\bibitem{EarleEells:JGD:1969}
Clifford~J. Earle and James Eells.
\newblock A fibre bundle description of {T}eichm\"{u}ller theory.
\newblock {\em J. Differential Geometry}, 3:19--43, 1969.
\newblock \href {https://doi.org/10.4310/jdg/1214428816}
  {\path{doi:10.4310/jdg/1214428816}}.

\bibitem{Evangelista-Alvarado_etall:JS:2019}
M.~Evangelista-Alvarado, P.~Su\'{a}rez-Serrato, J.~Torres~Orozco, and R.~Vera.
\newblock On {B}ott-{M}orse foliations and their {P}oisson structures in
  dimension three.
\newblock {\em J. Singul.}, 19:19--33, 2019.
\newblock \href {https://doi.org/10.5427/jsing.2019.19b}
  {\path{doi:10.5427/jsing.2019.19b}}.

\bibitem{Frankel:DCT:1965}
Theodore Frankel.
\newblock Critical submanifolds of the classical groups and {S}tiefel
  manifolds.
\newblock In {\em Differential and {C}ombinatorial {T}opology ({A} {S}ymposium
  in {H}onor of {M}arston {M}orse)}, pages 37--53. Princeton Univ. Press,
  Princeton, NJ, 1965.

\bibitem{Fukui:JJM:1976}
Kazuhiko Fukui.
\newblock On the homotopy type of some subgroups of {$ {\rm Diff}(M^{3})$}.
\newblock {\em Japan. J. Math. (N.S.)}, 2(2):249--267, 1976.
\newblock \href {https://doi.org/10.4099/math1924.2.249}
  {\path{doi:10.4099/math1924.2.249}}.

\bibitem{FukuiUshiki:JMKU:1975}
Kazuhiko Fukui and Shigehiro Ushiki.
\newblock On the homotopy type of {$F{\rm Diff}\ (S^{3},$}
  {${\mathcal{F}}_{R})$}.
\newblock {\em J. Math. Kyoto Univ.}, 15:201--210, 1975.
\newblock \href {https://doi.org/10.1215/kjm/1250523125}
  {\path{doi:10.1215/kjm/1250523125}}.

\bibitem{Gabai:JDG:2001}
David Gabai.
\newblock The {S}male conjecture for hyperbolic 3-manifolds: {${\rm
  Isom}(M^3)\simeq{\rm Diff}(M^3)$}.
\newblock {\em J. Differential Geom.}, 58(1):113--149, 2001.
\newblock URL: \url{http://projecteuclid.org/euclid.jdg/1090348284}.

\bibitem{Gelbukh:CMJ:2021}
Irina Gelbukh.
\newblock Morse-{B}ott functions with two critical values on a surface.
\newblock {\em Czechoslovak Math. J.}, 71(146)(3):865--880, 2021.
\newblock \href {https://doi.org/10.21136/CMJ.2021.0125-20}
  {\path{doi:10.21136/CMJ.2021.0125-20}}.

\bibitem{Gelbukh:SSMH:2022}
Irina Gelbukh.
\newblock Realization of a graph as the {R}eeb graph of a {M}orse-{B}ott or a
  round function.
\newblock {\em Studia Sci. Math. Hungar.}, 59(1):1--16, 2022.

\bibitem{Gramain:ASENS:1973}
Andr\'{e} Gramain.
\newblock Le type d'homotopie du groupe des diff\'{e}omorphismes d'une surface
  compacte.
\newblock {\em Ann. Sci. \'{E}cole Norm. Sup. (4)}, 6:53--66, 1973.
\newblock \href {https://doi.org/10.24033/asens.1242}
  {\path{doi:10.24033/asens.1242}}.

\bibitem{Hamstrom:BAMS:1974}
Mary-Elizabeth Hamstrom.
\newblock Homotopy in homeomorphism spaces, {${\rm TOP}$} and {${\rm PL}$}.
\newblock {\em Bull. Amer. Math. Soc.}, 80:207--230, 1974.
\newblock \href {https://doi.org/10.1090/s0002-9904-1974-13433-6}
  {\path{doi:10.1090/s0002-9904-1974-13433-6}}.

\bibitem{HatamianPrishlyak:PIGC:2020}
C.~Hatamian and A.~O. Prishlyak.
\newblock Heegaard diagrams and optimal {M}orse flows on non-orientable
  $3$-manifolds of genus $1$ and genus $2$.
\newblock {\em Proc. Int. Geom. Cent.}, 13(3):33--48, 2020.
\newblock \href {https://doi.org/10.15673/tmgc.v13i3.1779}
  {\path{doi:10.15673/tmgc.v13i3.1779}}.

\bibitem{Hatcher:ProcAMS:1981}
A.~Hatcher.
\newblock On the diffeomorphism group of {$S^{1}\times S^{2}$}.
\newblock {\em Proc. Amer. Math. Soc.}, 83(2):427--430, 1981.
\newblock \href {https://doi.org/10.2307/2043543} {\path{doi:10.2307/2043543}}.

\bibitem{Hatcher:AnnM:1983}
Allen~E. Hatcher.
\newblock A proof of the {S}male conjecture, {${\rm Diff}(S^{3})\simeq {\rm
  O}(4)$}.
\newblock {\em Ann. of Math. (2)}, 117(3):553--607, 1983.
\newblock \href {https://doi.org/10.2307/2007035} {\path{doi:10.2307/2007035}}.

\bibitem{HendriksMcCullough:TA:1987}
Harrie Hendriks and Darryl McCullough.
\newblock On the diffeomorphism group of a reducible {$3$}-manifold.
\newblock {\em Topology Appl.}, 26(1):25--31, 1987.
\newblock \href {https://doi.org/10.1016/0166-8641(87)90023-X}
  {\path{doi:10.1016/0166-8641(87)90023-X}}.

\bibitem{HladyshPrishlyak:UMJ:2016}
B.~Hladysh and A.~O. Prishlyak.
\newblock Functions with nondegenerate critical points on the boundary of a
  surface.
\newblock {\em Ukr. Math. Journal}, 68(1):28--37, 2016.
\newblock \href {https://doi.org/10.1007/s11253-016-1206-5}
  {\path{doi:10.1007/s11253-016-1206-5}}.

\bibitem{HladyshPrishlyak:JMPAG:2019}
B.~Hladysh and A.~O. Prishlyak.
\newblock Simple {M}orse functions on an oriented surface with boundary.
\newblock {\em J. Math. Phys. Anal. Geom.}, 15(3):354--368, 2019.
\newblock \href {https://doi.org/10.15407/mag15.03.354}
  {\path{doi:10.15407/mag15.03.354}}.

\bibitem{HongKalliongisMcCulloughRubinstein:LMN:2012}
Sungbok Hong, John Kalliongis, Darryl McCullough, and J.~Hyam Rubinstein.
\newblock {\em Diffeomorphisms of elliptic 3-manifolds}, volume 2055 of {\em
  Lecture Notes in Mathematics}.
\newblock Springer, Heidelberg, 2012.
\newblock \href {https://doi.org/10.1007/978-3-642-31564-0}
  {\path{doi:10.1007/978-3-642-31564-0}}.

\bibitem{HutchingsNelson:AGT:2020}
Michael Hutchings and Jo~Nelson.
\newblock Axiomatic {$S^1$} {M}orse-{B}ott theory.
\newblock {\em Algebr. Geom. Topol.}, 20(4):1641--1690, 2020.
\newblock \href {https://doi.org/10.2140/agt.2020.20.1641}
  {\path{doi:10.2140/agt.2020.20.1641}}.

\bibitem{Ivanov:LOMI:1976}
N.~V. Ivanov.
\newblock Groups of diffeomorphisms of {W}aldhausen manifolds.
\newblock {\em Zap. Nau\v{c}n. Sem. Leningrad. Otdel. Mat. Inst. Steklov.
  (LOMI)}, 66:172--176, 209, 1976.
\newblock Studies in topology, II.

\bibitem{KadzisaMimura:JFPTA:2011}
Hiroyuki Kadzisa and Mamoru Mimura.
\newblock Morse-{B}ott functions and the {L}usternik-{S}chnirelmann category.
\newblock {\em J. Fixed Point Theory Appl.}, 10(1):63--85, 2011.
\newblock \href {https://doi.org/10.1007/s11784-010-0041-9}
  {\path{doi:10.1007/s11784-010-0041-9}}.

\bibitem{KervaireMilnor:AnnMAth:1963}
Michel~A. Kervaire and John~W. Milnor.
\newblock Groups of homotopy spheres. {I}.
\newblock {\em Ann. of Math. (2)}, 77:504--537, 1963.
\newblock \href {https://doi.org/10.2307/1970128} {\path{doi:10.2307/1970128}}.

\bibitem{KhokhliukMaksymenko:IndM:2020}
O.~Khokhliuk and S.~Maksymenko.
\newblock Diffeomorphisms preserving {M}orse-{B}ott functions.
\newblock {\em Indag. Math. (N.S.)}, 31(2):185--203, 2020.
\newblock \href {https://doi.org/10.1016/j.indag.2019.12.004}
  {\path{doi:10.1016/j.indag.2019.12.004}}.

\bibitem{KhokhliukMaksymenko:PIGC:2020}
O.~Khokhliuk and S.~Maksymenko.
\newblock Smooth approximations and their applications to homotopy types.
\newblock {\em Proc. Int. Geom. Cent.}, 13(2):68--108, 2020.
\newblock \href {https://doi.org/10.15673/tmgc.v13i2.1781}
  {\path{doi:10.15673/tmgc.v13i2.1781}}.

\bibitem{KhokhliukMaksymenko:Nbh:2022}
O.~Khokhliuk and S.~Maksymenko.
\newblock Foliated and compactly supported isotopies of regular neighborhoods,
  2022.
\newblock \href {https://arxiv.org/abs/2208.05876} {\path{arXiv:2208.05876}}.

\bibitem{KhokhliukMaksymenko:JHRS:2023}
O.~Khokhliuk and S.~Maksymenko.
\newblock Homotopy types of diffeomorphism groups of polar {M}orse--{B}ott
  foliations on lens spaces, 1.
\newblock {\em J. Homotopy Relat. Struct.}, 18(2-3):313--356, 2023.
\newblock \href {https://doi.org/10.1007/s40062-023-00328-z}
  {\path{doi:10.1007/s40062-023-00328-z}}.

\bibitem{Kolar:RMP:2010}
Ivan Kol\'{a}\v{r}.
\newblock On the gauge version of exponential map.
\newblock {\em Rep. Math. Phys.}, 65(2):241--246, 2010.
\newblock \href {https://doi.org/10.1016/S0034-4877(10)80018-7}
  {\path{doi:10.1016/S0034-4877(10)80018-7}}.

\bibitem{Lee:Manifolds:2013}
John~M. Lee.
\newblock {\em Introduction to smooth manifolds}, volume 218 of {\em Graduate
  Texts in Mathematics}.
\newblock Springer, New York, second edition, 2013.

\bibitem{Lima:CMH:1964}
Elon~L. Lima.
\newblock On the local triviality of the restriction map for embeddings.
\newblock {\em Comment. Math. Helv.}, 38:163--164, 1964.
\newblock \href {https://doi.org/10.1007/bf02566913}
  {\path{doi:10.1007/bf02566913}}.

\bibitem{MaciasVirgos_PereiraSaez:IM:2020}
Enrique Mac\'{\i}as-Virg\'{o}s, Mar\'{\i}a~Jos\'{e} Pereira-S\'{a}ez, and
  Daniel Tanr\'{e}.
\newblock Non-linear {M}orse-{B}ott functions on quaternionic {S}tiefel
  manifolds.
\newblock {\em Indag. Math. (N.S.)}, 31(6):968--983, 2020.
\newblock \href {https://doi.org/10.1016/j.indag.2020.08.003}
  {\path{doi:10.1016/j.indag.2020.08.003}}.

\bibitem{MafraScarduaSeade:JS:2014}
A.~Mafra, B.~Sc\'{a}rdua, and J.~Seade.
\newblock On smooth deformations of foliations with singularities.
\newblock {\em J. Singul.}, 9:101--110, 2014.
\newblock \href {https://doi.org/10.5427/jsing.2014.9i}
  {\path{doi:10.5427/jsing.2014.9i}}.

\bibitem{Maksymenko:TA:2003}
S.~Maksymenko.
\newblock Smooth shifts along trajectories of flows.
\newblock {\em Topology Appl.}, 130(2):183--204, 2003.
\newblock \href {https://doi.org/10.1016/s0166-8641(02)00363-2}
  {\path{doi:10.1016/s0166-8641(02)00363-2}}.

\bibitem{Maksymenko:OsakaJM:2011}
S.~Maksymenko.
\newblock Local inverses of shift maps along orbits of flows.
\newblock {\em Osaka Journal of Mathematics}, 48(2):415--455, 2011.
\newblock \href {https://arxiv.org/abs/0806.1502} {\path{arXiv:0806.1502}}.

\bibitem{Maksymenko:PocIMNANUS:2023}
S.~Maksymenko.
\newblock Diffeomorphism groups of {M}orse-{B}ott foliation on the solid
  {K}lein bottle by {K}lein bottles parallel to the boundary.
\newblock {\em Transactions of the Institute of Mathematics of NAS of Ukraine.
  ``Modern problems of mathematics and its applications, III''},
  20(1):896--910, 2023.
\newblock \href {https://doi.org/10.3842/trim.v20n1.532}
  {\path{doi:10.3842/trim.v20n1.532}}.

\bibitem{Maksymenko:Lens2:2022}
S.~Maksymenko.
\newblock Homotopy types of diffeomorphism groups of polar {M}orse--{B}ott
  foliations on lens spaces, 2.
\newblock {\em J. Homotopy Relat. Struct.}, 19(2):239--273, 2024.
\newblock \href {https://doi.org/10.1007/s40062-024-00346-5}
  {\path{doi:10.1007/s40062-024-00346-5}}.

\bibitem{MartinezAlfaroMezaSarmientoOliveira:CM:2016}
J.~Mart\'{\i}nez-Alfaro, I.~S. Meza-Sarmiento, and R.~Oliveira.
\newblock Topological classification of simple {M}orse {B}ott functions on
  surfaces.
\newblock In {\em Real and complex singularities}, volume 675 of {\em Contemp.
  Math.}, pages 165--179. Amer. Math. Soc., Providence, RI, 2016.

\bibitem{MartinezAlfaroMezaSarmientoOliveira:JDE:2016}
J.~Mart\'{\i}nez-Alfaro, I.~S. Meza-Sarmiento, and R.~D.~S. Oliveira.
\newblock Singular levels and topological invariants of {M}orse {B}ott
  integrable systems on surfaces.
\newblock {\em J. Differential Equations}, 260(1):688--707, 2016.
\newblock \href {https://doi.org/10.1016/j.jde.2015.09.008}
  {\path{doi:10.1016/j.jde.2015.09.008}}.

\bibitem{MartinezAlfaroMezaSarmientoOliveira:TMNA:2018}
Jos\'e Mart\'{\i}nez-Alfaro, Ingrid~S. Meza-Sarmiento, and Regilene D.~S.
  Oliveira.
\newblock Singular levels and topological invariants of {M}orse-{B}ott
  foliations on non-orientable surfaces.
\newblock {\em Topol. Methods Nonlinear Anal.}, 51(1):183--213, 2018.

\bibitem{Munkres:AnnMath:1960}
James Munkres.
\newblock Obstructions to the smoothing of piecewise-differentiable
  homeomorphisms.
\newblock {\em Ann. of Math. (2)}, 72:521--554, 1960.
\newblock \href {https://doi.org/10.2307/1970228} {\path{doi:10.2307/1970228}}.

\bibitem{Palais:CMH:1960}
Richard~S. Palais.
\newblock Local triviality of the restriction map for embeddings.
\newblock {\em Comment. Math. Helv.}, 34:305--312, 1960.
\newblock \href {https://doi.org/10.1007/bf02565942}
  {\path{doi:10.1007/bf02565942}}.

\bibitem{Prishlyak:UMJ:2000}
A.~O. Prishlyak.
\newblock Conjugacy of {M}orse functions on surfaces with values on the line
  and circle.
\newblock {\em Ukr. Math. Journal}, 52(10):1421--1425, 2000.
\newblock \href {https://doi.org/10.1023/A:1010461319703}
  {\path{doi:10.1023/A:1010461319703}}.

\bibitem{Prishlyak:UMJ:2002}
A.~O. Prishlyak.
\newblock Topological equivalence of {M}orse-{S}male vector fields with beh $2$
  on three-dimensional manifolds.
\newblock {\em Ukr. Math. Journal}, 54(4):492--500, 2002.
\newblock \href {https://doi.org/10.1023/A:1021035327909}
  {\path{doi:10.1023/A:1021035327909}}.

\bibitem{ScarduaSeade:JDG:2009}
Bruno Sc\'ardua and Jos\'e Seade.
\newblock Codimension one foliations with {B}ott-{M}orse singularities. {I}.
\newblock {\em J. Differential Geom.}, 83(1):189--212, 2009.
\newblock URL: \url{http://projecteuclid.org/euclid.jdg/1253804355}.

\bibitem{ScarduaSeade:JT:2011}
Bruno Sc\'{a}rdua and Jos\'{e} Seade.
\newblock Codimension 1 foliations with {B}ott-{M}orse singularities {II}.
\newblock {\em J. Topol.}, 4(2):343--382, 2011.
\newblock \href {https://doi.org/10.1112/jtopol/jtr004}
  {\path{doi:10.1112/jtopol/jtr004}}.

\bibitem{Sharko:MFAT:1995}
V.~V. Sharko.
\newblock Bott functions and vector fields.
\newblock {\em Methods Funct. Anal. Topology}, 1(1):86--92, 1995.

\bibitem{Smolentsev:SMP:2006}
N.~K. Smolentsev.
\newblock Diffeomorphism groups of compact manifolds.
\newblock {\em Sovrem. Mat. Prilozh.}, (37, Geometriya):3--100, 2006.
\newblock \href {https://doi.org/10.1007/s10958-007-0471-0}
  {\path{doi:10.1007/s10958-007-0471-0}}.

\bibitem{Thom:ProcICM:1958}
R.~Thom.
\newblock Des vari\'{e}t\'{e}s triangul\'{e}es aux vari\'{e}t\'{e}s
  diff\'{e}rentiables.
\newblock In {\em Proc. {I}nternat. {C}ongress {M}ath. 1958}, pages 248--255.
  Cambridge Univ. Press, New York, 1960.

\bibitem{Wajnryb:FM:1998}
Bronis{\l}aw Wajnryb.
\newblock Mapping class group of a handlebody.
\newblock {\em Fund. Math.}, 158(3):195--228, 1998.
\newblock \href {https://doi.org/10.4064/fm-158-3-195-228}
  {\path{doi:10.4064/fm-158-3-195-228}}.

\bibitem{WeissBruce:AMS:2001}
Michael Weiss and Bruce Williams.
\newblock Automorphisms of manifolds.
\newblock In {\em Surveys on surgery theory, {V}ol. 2}, volume 149 of {\em Ann.
  of Math. Stud.}, pages 165--220. Princeton Univ. Press, Princeton, NJ, 2001.

\end{thebibliography}

\end{document}